\documentclass[12pt,leqno]{article}
\usepackage{amsmath,amssymb,bbm}
\usepackage{amsthm,amscd}
\def\bm#1{\mathbbm{#1}}
\def\fn#1{\mathop{{\rm #1}\vphantom{\dim}}}

\textheight=22cm \textwidth=15.5cm \hoffset=-1cm \voffset=-1.5cm
\makeatletter
\renewcommand{\section}{\@startsection{section}{1}{0mm}{12mm}{5mm}{\raggedright\bf\large}}

\def\@citex[#1]#2{\if@filesw\immediate\write\@auxout{\string\citation{#2}}\fi
  \def\@citea{}\@cite{\@for\@citeb:=#2\do
    {\@citea\def\@citea{\@citesep}\@ifundefined
       {b@\@citeb}{{\bf ?}\@warning
       {Citation `\@citeb' on page \thepage \space undefined}}%
{\csname b@\@citeb\endcsname}}}{#1}}
% Use a semicolon instead of a comma between multiple citations.
\def\@citesep{; }
\makeatother

\newtheoremstyle{Kang}{}{}{\itshape}{}{\bf}{}{.5em}{}
\theoremstyle{Kang}
\newtheorem{theorem}{Theorem}[section]

\newtheorem{lemma}[theorem]{Lemma}
\newtheorem{coro}[theorem]{Corollary}

\newtheoremstyle{Kremark}{}{}{}{}{\bf}{}{.5em}{}
\theoremstyle{Kremark}
\newtheorem{defn}[theorem]{Definition}
\newtheorem{example}[theorem]{Example}
\newtheorem*{remark}{Remark.}
\newtheorem{other}{}
\newenvironment{idef}[1]{\begin{other}}{\end{other}}

\allowdisplaybreaks[1]  % 允許多行數學式分頁
%\renewcommand{\theequation}{\arabic{section}.\arabic{equation}}

%\renewcommand{\qedsymbol}{$\blacksquare$}

%%%%%%%%%%%%%%%%%%%%%%%%%%%%%%%titlepage
\title{Retract Rational Fields}
\author{Ming-chang Kang\\[2mm]
Department of Mathematics and \\ Taida Institute of Mathematical Sciences\\
National Taiwan University\\
Taipei, Taiwan\\
E-mail: kang@math.ntu.edu.tw}
\date{}

\begin{document}

\maketitle

\footnote{\hspace*{-7.5mm}
Mathematics Subject Classification (2010): Primary 13A50, 14E08, Secondary 12F12, 12F20. \\
Keywords: Noether's problem, the rationality problem, retract rational, multiplicative group actions. \\
Partially supported by National Center for Theoretic Science
(Taipei office).}

\begin{abstract}
{\noindent\bf Abstract} Let $k$ be an infinite field. The notion
of retract $k$-rationality was introduced by Saltman in the study
of Noether's problem and other rationality problems. We will
investigate the retract rationality of a field in this paper.
Theorem 1. Let $k\subset K\subset L$ be fields. If $K$ is retract
$k$-rational and $L$ is retract $K$-rational, then $L$ is retract
$k$-rational. Theorem 2. For any finite group $G$ containing an
abelian normal subgroup $H$ such that $G/H$ is a cyclic group, for
any complex representation $G \rightarrow GL(V)$, the fixed field
$\bm{C}(V)^G$ is retract $\bm{C}$-rational. Theorem 3. If $G$ is a
finite group, then all the Sylow subgroups of $G$ are cyclic if
and only if $\bm{C}_{\alpha}(M)^G$ is retract $\bm{C}$-rational
for all $G$-lattices $M$, for all short exact sequences $\alpha :
0 \rightarrow \bm{C}^{\times} \rightarrow M_{\alpha} \rightarrow M
\rightarrow 0$. Because the unramified Brauer group of
 a retract $\bm{C}$-rational field is trivial, Theorem 2 and
Theorem 3 generalize previous results of Bogomolov and Barge
respectively (see Theorem \ref{t5.9} and Theorem \ref{t6.1}).
\end{abstract}

%---------------------------------S1
\section{Introduction}

Let $k$ be a field, and $L$ be a finitely generated field
extension of $k$. $L$ is called $k$-rational (or rational over
$k$) if $L$ is purely transcendental over $k$, i.e.\ $L$ is
isomorphic to some rational function field over $k$. $L$ is called
stably $k$-rational if $L(y_1,\ldots,y_m)$ is $k$-rational for
some $y_1,\ldots,y_m$ which are algebraically independent over
$L$. $L$ is called $k$-unirational if $L$ is $k$-isomorphic to a
subfield of some $k$-rational field extension of $k$. It is easy
to see that ``$k$-rational" $\Rightarrow$ ``stably $k$-rational"
$\Rightarrow$ ``$k$-unirational".

Let $G$ be a finite group acting on the rational function field
$k(x_g:g\in G)$ by $k$-automorphisms defined by $h\cdot
x_g=x_{hg}$ for any $g,h\in G$. Denote by $k(G)$ the fixed
subfield, i.e. $k(G)=k(x_g:g\in G)^G$. Noether's problem asks,
under what situation, the field $k(G)$ is $k$-rational.

Note that, if $k$ is an infinite field and $k(G)$ is $k$-rational
(resp. stably $k$-rational), then there exists a generic
$G$-Galois extension over $k$ \cite[Theorem 5.1]{Sa2}. On the
other hand, when Hilbert's irreducibility theorem is valid for $k$
(e.g. if $k$ is any algebraic number field), it is not difficult
to see that the existence of a generic $G$-Galois extension over
$k$ implies that there is a Galois field extension $K$ over $k$
such that $Gal(K/k) \simeq G$, i.e. the inverse Galois problem for
the pair $(k,G)$ is solvable (see, for example, \cite[Theorem
3.3]{Sw1}). In the study of generic Galois extensions and generic
division algebras, Saltman was led to the notion of retract
$k$-rationality \cite{Sa1,Sa4}, which is the main subject of this
paper.

\begin{defn}[{\cite[p.130; Sa4, Definition 3.1]{Sa1}}] \label{d1.1}
Let $k$ be an infinite field and $L$ be a field containing $k$.
$L$ is called retract $k$-rational, if there are some affine
domain $A$ over $k$ and $k$-algebra morphisms $\varphi: A\to
k[X_1,\ldots,X_n][1/f]$, $\psi:k[X_1,\ldots,X_n][1/f]$ where
$k[X_1,\ldots,X_n]$ is a polynomial ring over $k$, $f\in
k[X_1,\ldots,X_n]\backslash \{0\}$, satisfying that

(i) $L$ is the quotient field of $A$, and

(ii) $\psi \circ \varphi = 1_A$, the identity map on $A$.
\end{defn}

In the above definition of retract $k$-rationality, it is required
that $k$ is an infinite field because this assumption guarantees
the existence of sufficiently many $k$-specializations when we
apply the notion of retract rationality to other concepts or
problems. Here is a geometric picture of retract rationality.
Suppose that $L$ is retract $k$-rational over $k$. Then there are
quasi-projective varieties $V$ and $W$ defined over $k$, a
dominating $k$-morphism $p: V \rightarrow W$ satisfying that
$k(W)=L$, $k(V)$ is $k$-rational and $p$ has a section, i.e. a
$k$-morphism $s: W \rightarrow V$ with $ps=1_W$.

Another related notion is discussed by Colliot-Th\'{e}l\`{e}ne and
Sansuc \cite{CTS3}. A field $L$ over $k$ is called a direct factor
of a $k$-rational field if there is a field $L'$ over $k$ such
that the quotient field of $L\otimes_{k}L'$ is $k$-rational (in
particular, the $k$-algebra $L\otimes_{k}L'$ is an integral
domain). It is known that, if $L$ is the function field of some
algebraic torus $T$ over $k$, then $L$ is retract $k$-rational if
and only if it is a direct factor of some $k$-rational field
\cite[Proposition 7.4]{CTS3}.

Return to Noether's problem.

\begin{theorem}[\cite{Sa2,Sa4,De}] \label{t1.2}
Let $k$ be an infinite field and $G$ be a finite group.
The following statements are equivalent,

{\rm (i)} $k(G)$ is retract $k$-rational;

{\rm (ii)} There is a generic $G$-Galois extension over $k$;

{\rm (iii)} There exists a generic $G$-polynomial over $k$.
\end{theorem}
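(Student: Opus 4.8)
Throughout, a \emph{generic $G$-Galois extension over $k$} is a $G$-Galois extension $S/R$ of commutative rings in which $R=k[X_{1},\dots ,X_{n}][1/f]$ is a localization of a polynomial ring ($f\neq 0$) and which has the specialization property: for every field $L\supseteq k$ and every $G$-Galois $L$-algebra $T$ there is a $k$-algebra homomorphism $\sigma\colon R\to L$ with $S\otimes_{R,\sigma}L\cong T$ as $G$-algebras. A \emph{generic $G$-polynomial over $k$} is a monic $g(t_{1},\dots ,t_{m},X)\in k(t_{1},\dots ,t_{m})[X]$ whose Galois group over $k(t_{1},\dots ,t_{m})$ is $G$ and such that every $G$-Galois field extension $M/L$ with $L\supseteq k$ is the splitting field of $g(a_{1},\dots ,a_{m},X)$ for suitable $a_{i}\in L$. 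The plan is to prove (i)$\Leftrightarrow$(ii) and (ii)$\Leftrightarrow$(iii). Two standard facts will be used freely: (a) the extension $k(x_{g}:g\in G)/k(G)$ attached to the regular representation is \emph{versal} — it has an affine model $B/A$, meaning $A$ is a domain with $\mathrm{Frac}(A)=k(G)$ and $B/A$ a $G$-Galois extension, such that every $G$-Galois $L$-algebra ($L\supseteq k$) is $\cong B\otimes_{A,\rho}L$ for some $k$-algebra map $\rho\colon A\to L$ (this rests on the normal basis theorem for $G$-Galois algebras together with $k$ being infinite); (b) if one affine model of a field $L\supseteq k$ is a retract of a localized polynomial ring, then so is every sufficiently fine localization of any other affine model of $L$.

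(i)$\Rightarrow$(ii). Fix a versal model $B/A$ of $k(x_{g})/k(G)$. By (b) we may assume that the base $A$ is a retract of $R:=k[X_{1},\dots ,X_{n}][1/f]$, say via $\varphi\colon A\to R$ and $\psi\colon R\to A$ with $\psi\varphi=1_{A}$. Put $S:=B\otimes_{A,\varphi}R$, a $G$-Galois extension of $R$. Given $L\supseteq k$ and a $G$-Galois $L$-algebra $T$, versality yields $\rho\colon A\to L$ with $B\otimes_{A,\rho}L\cong T$; then $\rho\psi\colon R\to L$ satisfies $(\rho\psi)\varphi=\rho(\psi\varphi)=\rho$, so $S\otimes_{R,\rho\psi}L=B\otimes_{A,\rho}L\cong T$. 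Hence $S/R$ is a generic $G$-Galois extension. This implication is formal once versality is available.

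(ii)$\Leftrightarrow$(iii). For (iii)$\Rightarrow$(ii): given a generic polynomial $g$, set $R=k[t_{1},\dots ,t_{m}][1/D]$ with $D$ the discriminant of $g$, let $S'$ be the splitting $R$-algebra of $g$, and invert finitely many more elements of $R$ so that the resulting extension $S/R$ is $G$-Galois — the locus where this holds is open and contains the generic point. The specialization property for connected $T$ is exactly the defining property of $g$; for a general $T=\mathrm{Ind}_{H}^{G}M$ a short additional argument applies. For (ii)$\Rightarrow$(iii): let $S/R$ be generic, $R=k[X_{1},\dots ,X_{n}][1/f]$, so $F:=\mathrm{Frac}(R)$ is $k$-rational. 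The Galois group of $S\otimes_{R}F$ over $F$ is all of $G$, since under specialization the group can only shrink while the connected $G$-Galois field extension $k(x_{g})/k(G)$ must occur; in particular $S$ is a domain. As $R$ is a localized polynomial ring, $\mathrm{Pic}(R)=0$, so $S$ has a normal basis over $R$, and since $k$ is infinite a generic $k$-linear combination of a normal basis is a primitive element, $S=R[\theta]$. The minimal polynomial $g(X)\in R[X]\subseteq F[X]$ of $\theta$ is then a generic $G$-polynomial: its Galois group over $F$ is $G$, and every $G$-Galois field extension $M/L$ equals $S\otimes_{R,\sigma}L=L[\bar\theta]$ for some $\sigma$, hence is the splitting field of the specialization of $g$ along $\sigma$.

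(ii)$\Rightarrow$(i), the main point. Let $S/R$ be a generic $G$-Galois extension, $R=k[X_{1},\dots ,X_{n}][1/f]$, and let $B/A$ be a versal model of $k(x_{g})/k(G)$ with $\mathrm{Frac}(A)=k(G)$. Applying versality of $B/A$ to the $G$-Galois $\mathrm{Frac}(R)$-algebra $S\otimes_{R}\mathrm{Frac}(R)$ and clearing denominators produces $\varphi\colon A\to R'$, with $R'=R[1/g_{1}]$ again a localized polynomial ring, together with an isomorphism $B\otimes_{A,\varphi}R'\cong S\otimes_{R}R'$ of $G$-Galois $R'$-algebras. Applying genericity of $S/R$ to the $G$-Galois $k(G)$-algebra $B\otimes_{A}k(G)=k(x_{g})$ produces $\sigma\colon R\to k(G)$ with $S\otimes_{R,\sigma}k(G)\cong k(x_{g})$, and hence, after clearing denominators, a map $\psi\colon R\to A'$ to an affine model $A'$ of $k(G)$. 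Morally the composite $\psi\varphi$ pulls the versal extension $B$ back to itself, and one would like to extract from $\varphi$ and $\psi$ a retraction of $R'$ (or a localization of it) onto a model of $k(G)$; but two genuine obstacles intervene. First the denominators are mismatched: to compose one must arrange $\sigma(g_{1})\neq 0$, i.e.\ re-choose $\sigma$ so that it extends over $R'$. Second, even then $\psi\varphi$ need only carry $B$ to an \emph{isomorphic} $G$-Galois algebra, not to $B$ itself, so $\psi\varphi$ need not be the localization inclusion $A\hookrightarrow A'$ — and two $k$-algebra maps $A\to A'$ pulling $B$ back to isomorphic $G$-Galois algebras need not coincide. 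Both difficulties are handled using that $k$ is infinite: the specializations $R\to k(G)$ realizing the fixed extension $k(x_{g})/k(G)$ are plentiful enough that $\sigma$ may be chosen in general position with respect to $g_{1}$, and the rigidity of the versal extension $k(x_{g})/k(G)$ (twisting $\psi$ by an automorphism of the relevant $G$-Galois algebra) then lets one arrange $\psi\varphi=1$ on a suitable localization of $A$. That localization is an affine model of $k(G)$ which is a retract of the localized polynomial ring $R'$, so $k(G)$ is retract $k$-rational. I expect this last step — converting ``mutual domination compatible with the $G$-Galois structure'' into an honest retraction — to be the principal obstacle; it is precisely the point at which Saltman's analysis of generic Galois extensions \cite{Sa2,Sa4} (and, for the polynomial reformulation, DeMeyer's \cite{De}) does its work.
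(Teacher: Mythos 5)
First, a point of comparison: the paper does not actually prove Theorem 1.2 --- it cites \cite[Theorem 5.3]{Sa2}, \cite[Theorem 3.12]{Sa4} for (i)$\Leftrightarrow$(ii) and \cite{De,DM} for (iii). Your write-up ultimately does the same: for the decisive implication (ii)$\Rightarrow$(i) you name the obstacle and then defer it to ``Saltman's analysis''. So the proposal is not yet a proof, and the mechanism you suggest for closing the gap would not work. Twisting $\psi$ by an automorphism of the relevant $G$-Galois algebra only changes the torsor isomorphism over the base; it does not change the ring map $\psi\varphi\colon A\to A'$ at all, so it cannot convert $\psi\varphi$ into the identity. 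Nor is there any ``rigidity'' of the versal extension to invoke: versal and generic specializations come with no uniqueness, and two maps $A\to A'$ pulling the versal extension back to isomorphic $G$-Galois algebras can be wildly different. Concretely, the composite rational map $V/G \to \mathbb{A}^n \to V/G$ obtained by stringing together the versality map and a genericity specialization is just some self-map of $V/G$; there is no reason it should be dominant, let alone the identity, and no amount of re-choosing $\sigma$ ``in general position'' fixes this.

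The actual argument avoids composing the two universal-property maps altogether. Given a generic extension $S/R$ with $R=k[X_1,\dots,X_n][1/f]$, let $V=\bigoplus_{g}k\,x_g$ be the regular representation and form the twist $W=(\operatorname{Spec}S\times V)/G$. By descent for vector bundles (the no-name lemma), $W\to\operatorname{Spec}R$ is a vector bundle, so $k(W)$ is $k$-rational; and the genericity datum for $T=k(x_g)$ over $L=k(G)$ --- a map $\sigma\colon R\to k(G)$ together with a $G$-isomorphism $S\otimes_{R,\sigma}k(G)\simeq k(x_g)$ --- is exactly the data of a rational section of the natural projection $W\to V/G$, which is split \emph{by construction}. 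Spreading this out over an affine model of $k(G)$, and using Swan's Lemma (Lemma \ref{l4.1}) to replace a model of the rational field $k(W)$ by a localized polynomial ring, gives the retraction; this is in substance how \cite{Sa2,Sa4} proceed, and no identity of composites ever has to be engineered. Your (i)$\Rightarrow$(ii) is essentially the standard argument and is fine once versality-with-avoidance is granted ($k$ infinite). But the (ii)$\Leftrightarrow$(iii) sketch also hides real content: ``$\operatorname{Pic}(R)=0$, hence $S$ has a normal basis over $R$'' is unjustified (one needs freeness of $S$ as an $R[G]$-module, not triviality of line bundles; what one actually uses is a primitive element after a further localization together with the avoidance property of generic extensions), and the specialization to non-connected Galois algebras in (iii)$\Rightarrow$(ii) is precisely the content of \cite{DM}, not a ``short additional argument''.
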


\begin{proof}
${\rm (i)}\Leftrightarrow{\rm (ii)}$ by \cite[Theorem 5.3; Sa4,
Theorem 3.12]{Sa2}. The equivalence of (i), (ii), (iii) was proved
in \cite{De,DM}.
\end{proof}

It is not difficult to verify that, if $k$ is an infinite field,
then ``$k$-rational" $\Rightarrow$ ``stably $k$-rational"
$\Rightarrow$ ``retract $k$-rational" $\Rightarrow$
``$k$-unirational". Thus, if $k(G)$ is not retract $k$-rational,
then $k(G)$ is not stably $k$-rational (and is not $k$-rational,
in particular). This is the strategy for showing that $\bm{C}(G)$
is not $\bm{C}$-rational for some group $G$ of order $p^9$ by
Saltman in \cite{Sa3} (where $p$ is any prime number). On the
other hand, if $k(G)$ is $k$-rational, then $k(G)$ is retract
$k$-rational.

We remark that the direction of the implication ``rational"
$\Rightarrow$ ``stably rational" $\Rightarrow$ ``retract rational"
$\Rightarrow$ ``$k$-unirational" cannot be reversed. There is a
field extension $L$ of $\bm{C}$ such that $L$ is stably
$\bm{C}$-rational, but not $\bm{C}$-rational \cite{BCTSSD}. If
$C_p$ denotes the cyclic group of order $p$, then $\bm{Q}(C_p)$ is
retract $\bm{Q}$-rational, but not stably $\bm{Q}$-rational when
$p=47$, 113 or 233, etc. (see  Theorem \ref{t3.7} and the remark
after its proof). $\bm{Q}(C_8)$ is $\bm{Q}$-unirational, but not
retract $\bm{Q}$-rational (see Theorem \ref{t2.9}); for finitely
generated field extensions over $\bm{C}$ which are
$\bm{C}$-unirational, but not retract $\bm{C}$-rational, see
\cite{Sa3,Bo,CHKK}. On the other hand, we don't know whether there
is a field extension $L$ of $\bm{C}$ such that $L$ is retract
$\bm{C}$-rational, but is not stably $\bm{C}$-rational. The reader
is referred to the papers \cite{MT,CTS2} for surveys of the
rationality problems, and to Swan's paper \cite{Sw1} for Noether's
problem.

\bigskip
In this paper, we will prove a transitivity theorem for retract
rationality in Theorem \ref{t4.2}. Then we will show that
$\bm{C}(V)^G$ is retract $\bm{C}$-rational where $G \rightarrow
GL(V)$ is any complex representation and $G$ is a finite group
containing an abelian normal subgroup $H$ such that $G/H$ is a
cyclic group (see Theorem \ref{t5.10}). Because of Theorem
\ref{t3.2}, Theorem \ref{t5.10} may be regarded as a
generalization of a result of Bogomolov (see Theorem \ref{t5.9}).
Finally we will show that, if $G$ is a finite group, then all the
Sylow subgroups of $G$ are cyclic if and only if
$\bm{C}_{\alpha}(M)^G$ is retract $\bm{C}$-rational for all
$G$-lattices $M$, for all short exact sequences $\alpha : 0
\rightarrow \bm{C}^{\times} \rightarrow M_{\alpha} \rightarrow M
\rightarrow 0$. This result generalizes a theorem of Barge (see
Theorem \ref{t6.1}).

An application of the transitivity theorem is Theorem \ref{t5.4},
which asserts that $k(G)$ is retract $k$-rational is equivalent to
the retract $k$-rationality of $k(M)^G$ where $M$ is any faithful
$G$-lattice with $[M]^{fl}$ invertible. We remark that Theorem
\ref{t2.9} and Theorem \ref{t3.7}, due to Voskresenskii and
Saltman respectively, are of interest themselves. The proof of
these two theorems are included for the convenience of the reader.

\bigskip
We remark that there is a notion, called the property
$\fn{Rat}(G/k)$ by Serre \cite[p.86]{GMS}, which is slightly
stronger than the existence of a generic $G$-Galois extension over
$k$. We define it as follows.
\begin{defn}[{\cite[p.11,86]{GMS}}] \label{d1.3}
Let $k$ be an infinite field and $G$ be a finite group. We say
that the property $\fn{Rat}(G/k)$ holds for the pair ($G, k$), if
there exists a versal $G$-torsor over $L$ where $L$ is some
$k$-rational field extension.
\end{defn}

In order to explain this property, we define first the notion of a
$G$-Galois covering.

\begin{defn}[{\cite[p.43; Mi2, p.41; Sw1, Proposition 2.1]{Mi1}}] \label{d1.4}
Let $G$ be a finite group. Let $R \subset S$ be commutative rings
such that the group $G$ acts on $S$ by $R$-automorphisms of $S$
with $R=S^G$ where $S^G$ is the ring of invariants of $S$ under
the action of $G$. We say that $S$ is a Galois covering of $R$
with group $G$ (for short, $S$ is a $G$-Galois covering of $R$),
if the morphism $h: S \otimes_R S \rightarrow \prod_{\sigma \in
G}S$ defined below is an isomorphism where we define $h(s_1
\otimes s_2)=(\cdots, h_{\sigma}(s_1 \otimes s_2),
\cdots)_{\sigma}\in \prod_{\sigma \in G}S$ with $h_{\sigma}(s_1
\otimes s_2)=s_1\cdot\sigma (s_2)$ (i.e. the $\sigma$-th
coordinate of $h(s_1 \otimes s_2)$ is $s_1 \cdot \sigma (s_2)$).
We also say that $Spec (S) \rightarrow Spec (R)$ is a $G$-Galois
covering if $S$ is a $G$-Galois covering of $R$.

The above definition can be globalized. Namely, when $V, W$ are
schemes or algebraic varieties defined over a field $k$ and $V
\rightarrow W$ is a faithfully flat morphism, we can define by the
similar way the notion that $V \rightarrow W$ is a $G$-Galois
covering.

A $G$-Galois covering $V \rightarrow W$ is nothing but a
$G$-torsor of $W$, i.e. a principal homogeneous space over $W$
under $G$ \cite[Example 11.3, p.76; Mi1, p.120 and p.43--44]{Mi2}.
If $R \subset S$ are commutative rings, then $S$ is a $G$-Galois
covering of $R$ is equivalent to the fact that $S$ is a Galois
extension of $R$ with group $G$, in the sense of Galois extensions
of commutative rings \cite[Proposition 2.1]{Sw1}. Since $G$ is a
finite group, the assumption of faithful flatness in \cite[p.43;
Mi2, p. 43]{Mi1} guarantees that the morphism is affine and finite
(by the faithfully flat descent \cite[page 20]{Mi1}); when both
$V$ and $W$ are affine schemes, the assumption of faithful
flatness for $V \rightarrow W$ is redundant by \cite[Corollary
2.2]{Sw1}.
\end{defn}

Now we may rephrase Serre's property $\fn{Rat}(G/k)$ as follows.
\begin{defn}\label{d1.5}
Let $k$ be an infinite field and $G$ be a finite group. We say
that the property $\fn{Rat}(G/k)$ holds for the pair ($G, k$), if
there exists a $G$-Galois covering $V \rightarrow W$ where $W$ is
a smooth $k$-rational variety defined over $k$ satisfying the
following condition : For any field $k'$ containing $k$, any
$G$-Galois covering $A$ of $k'$, any nonempty open subset $U
\subset W$, there exists a point $x \in U(k') \subset W$ such that
$Spec(A) \simeq V \times_W Spec(k')$ where the fibre product $V
\times_W Spec(k')$ is defined via the morphism $Spec(k')
\rightarrow \{x \} \subset W$.

Here is an affine version. The property $\fn{Rat}(G/k)$ holds, if
there is a $G$-Galois covering $S$ of $R$ satisfying that (i) $R$
and $S$ are affine $k$-algebra, (ii) $R$ is a localized polynomial
ring, i.e. $R=k[X_1, \cdots, X_n][1/f]$ for some non-zero
polynomial $f$, (iii) for any field $k'$ containing $k$, any
$G$-Galois covering $A$ of $k'$, any $r \in R \setminus \{0 \}$,
there is a $k$-morphism $\phi : R \rightarrow k'$ such that
$\phi(r) \neq 0$ and $A \simeq S \otimes_{\phi} k'$.
\end{defn}

\bigskip
We claim that, if $k$ is an infinite field and $G$ is a finite
group, then ``$k(G)$ is stably $k$-rational" $\Rightarrow$ ``the
property $\fn{Rat}(G/k)$ holds" $\Rightarrow$ ``there is a generic
$G$-Galois extension over $k$".

For the implication``$k(G)$ is stably $k$-rational" $\Rightarrow$
``the property $\fn{Rat}(G/k)$ holds", the same proof of
\cite[Theorem 4.2]{Sw1} works as well in this situation; in
particular, we rely on Kuyk's Lemma, i.e. \cite[Lemma 4.5]{Sw1}.

As to the implication ``the property $\fn{Rat}(G/k)$ holds"
$\Rightarrow$ ``there is a generic $G$-Galois extension over $k$",
suppose that $V \rightarrow W$ is the $G$-Galois covering given in
Definition \ref{d1.5}. Choose an affine open subset $W_0$ of $W$
such that $W_0 \simeq Spec(R)$ for some localized polynomial ring
$R$ (use Lemma \ref{l4.1}, if necessary). Consider the $G$-Galois
covering $V \times_W W_0 \rightarrow W_0$. The fibre product $V
\times_W W_0$ is an affine variety because $V \rightarrow W$ is a
$G$-torsor and $G$ is a finite constant group scheme. Write
$V\times_W W_0 = Spec(S)$. Then the pair ($R,S$) satisfies the
conditions for a generic $G$-Galois extension over $k$ (see
\cite[Definition 1.1]{Sa2} for its definition).

\medskip
By Theorem \ref{t1.2}, we find that, if $k$ is an infinite field
and $G$ is a finite group, then ``$k(G)$ is stably $k$-rational"
$\Rightarrow$ ``the property $\fn{Rat}(G/k)$ holds" $\Rightarrow$
``$k(G)$ is retract $k$-rational". We don't know whether the two
notions ``$\fn{Rat}(G/k)$ holds" and ``$k(G)$ is retract
$k$-rational" are equivalent or not.

\medskip
In \cite{Ku} Kunyavskii studies the birational classification of
3-dimensional algebraic tori over a field $k$. He gives a list of
all those tori which are $k$-rational; the remaining ones are not
stably $k$-rational. In a private communication during 2009
Kunyavskii informed me that, from the proof in \cite{Ku}, it is
not difficult to deduce that a 3-dimensional algebraic torus over
$k$ is not retract $k$-rational if and only if it is not stably
$k$-rational.

\bigskip
We organize this paper as follows. We review basic notions of
multiplicative group actions in Section 2. In Section 3 Saltman's
work on retract rationality is reviewed. The transitivity theorem
of retract rationality is proved in Section 4. Applications are
given in Section 5 where Theorem \ref{t5.10} is the main result.
In Section 6, we study the fixed subfields of monomial actions;
Theorem \ref{t6.6} is the generalization of Barge's Theorem.

\begin{idef}{Standing notations.}
In discussing retract rationality, we always assume that the
ground field is infinite (see Definition \ref{d1.1}). Thus,
throughout this paper, we will assume that $k$ is an infinite
field, unless otherwise specified. A finitely generated field
extension $L$ of $k$ is called a $k$-field for short.
$k(x_1,\ldots,x_n)$ or $k(X_1,\ldots,X_n)$ denotes the rational
function field of $n$ variables over $k$. For emphasis, recall
$k(G)=k(x_g:g\in G)^G$.

We denote by $\zeta_n$ a primitive $n$-th root of unity in some
extension field of $k$. When we write $\zeta_n\in k$, it is
understood that $\fn{char}k=0$ or $\fn{char}k=p>0$ with $p\nmid
n$. Similarly, when we write $\fn{char}k\nmid n$, it is understood
that $\fn{char}k=0$ or $\fn{char}k=p>0$ with $p\nmid n$.

For brevity, we will called $k[X_1,\ldots,X_n][1/f]$ a localized
polynomial ring when $k[X_1,\ldots,X_n]$ is a polynomial ring and
$f\in k[X_1,\ldots,X_n]\backslash \{0\}$ (see the definition of
retract rationality in Definition \ref{d1.1}). An affine domain
over $k$ or an affine $k$-domain (or simply an affine domain) is
an integral domain of the form $k[\alpha_1,\ldots,\alpha_m]$ for
finitely many elements $\alpha_1,\ldots,\alpha_m$.

All the groups in this article are finite groups.
$C_n$ denotes the cyclic group of order $n$.
$\bm{Z}[\pi]$ is the group ring of the finite group $\pi$ over $\bm{Z}$.
The exponent of a group $G$ is the least common multiple of the orders of elements in $G$.
\end{idef}

%----------------------------------------S2
\section{Multiplicative group actions}

Let $\pi$ be a finite group.
A $\pi$-lattice $M$ is a finitely generated $\bm{Z}[\pi]$-module such that $M$ is a free abelian group
when it is regarded as an abelian group.

For a $\pi$-lattice $M$, $k[M]$ denotes the Laurent polynomial
ring and $k(M)$ is the quotient field of $k[M]$. Explicitly, if
$M=\bigoplus_{1\le i\le m}\bm{Z}\cdot x_i$\vspace*{2pt} as a free
abelian group, then $k[M]=k[x_1^{\pm 1},\ldots,x_m^{\pm 1}]$ and
$k(M)=k(x_1,\ldots,x_m)$. Since $\pi$ acts on $M$, it will act on
$k[M]$ and $k(M)$ by $k$-automorphisms, i.e.\ if $\sigma\in \pi$
and \vspace*{2pt}$\sigma\cdot x_j=\sum_{1\le i\le m}a_{ij}x_i\in
M$, then we define the multiplicative action of $\sigma$ on $k[M]$
and $k(M)$ by $\sigma\cdot x_j=\prod_{1\le i\le m} x_i^{a_{ij}}$.
\vspace*{2pt}

The multiplicative action of $\pi$ on $k(M)$ is called a purely
monomial action in \cite{HK1}. If $\pi$ is a group acting on the
rational function field $k(x_1,\ldots,x_m)$ by $k$-automorphism
such that $\sigma\cdot x_j=c_j(\sigma)\cdot \prod_{1\le i\le
m}x_i^{a_{ij}}$\vspace*{2pt} where $\sigma \in \pi$,
$a_{ij}\in\bm{Z}$ and $c_j(\sigma)\in k\backslash \{0\}$, such a
multiplicative group action is called a monomial action.

\begin{defn} \label{d2.1}
Let $M=\bigoplus_{1\le j\le m} \bm{Z}\cdot x_j$ be a $\pi$-lattice
and $\pi$ act on $k(M)=k(x_1,\ldots,$ $x_m)$ by purely monomial
$k$-automorphisms. The fixed field, denoted by $k(M)^\pi$, is
defined as $k(M)^\pi=\{f\in k(x_1,\ldots,x_m):\sigma\cdot f=f$ for
any $\sigma\in\pi\}$. This field $k(M)^\pi$ was designated as
$k(M,\pi)$ by Saltman in \cite{Sa5}.

On the other hand, the fixed field for a monomial action is
denoted by $k_\alpha(M)^\pi$ (here $\alpha$ designates the
extension of $\bm{Z}[\pi]$-modules associated to the monomial
action, which will be defined below). Precisely, if $\pi$ acts on
$k(M)=k(x_1,\ldots,x_m)$ by monomial $k$-automorphisms, define
$M_\alpha$ to be the (multiplicatively written)
$\bm{Z}[\pi]$-module generated by $x_1,\ldots,x_m$ and
$k^{\times}$(:$= k\backslash \{0\}$) in
$k(x_1,\ldots,x_m)\backslash\{0\}$. Thus we obtain a short exact
sequence of $\bm{Z}[\pi]$-modules $0\to k^{\times}\to M_\alpha \to
M\to 0$; label this short exact sequence (or the module extension)
as $\alpha$. Define $k_\alpha(M)^\pi=\{f\in
k(x_1,\ldots,x_m):\sigma\cdot f=f$ for any $\sigma\in\pi\}$.

 Note that $k_\alpha(M)^\pi$ of this article agrees
with the notation of Saltman in \cite[p.538]{Sa6}; our notation
$k_\alpha(M)^\pi$ also agrees with Saltman's notation in
\cite[p.535]{Sa7}, except that, $M_\alpha$ in \cite{Sa7} is the
multiplicative subgroup generated by $x_1,\ldots,x_m$ and $\mu$
where $k$ is assumed to be algebraically closed and $\mu$ denotes
the group of all roots of unity in $k^{\times}$.
\end{defn}

\begin{defn} \label{d2.2}
Let $K$ be a $k$-field, $\pi$ be a finite group, and $M
=\bigoplus_{1\le j\le m}\bm{Z}\cdot x_j$ be a $\pi$-lattice.
Suppose that $\pi$ acts on $K$ by $k$-automorphisms of $K$ and
$\pi$ acts on $K(M)$ by monomial $k$-automorphism, i.e.\
$\sigma\cdot x_j=c_j(\sigma)\cdot \prod_{1\le i\le m}x_i^{a_{ij}}$
where $\sigma\in\pi$,\vspace*{2pt} $c_j(\sigma)\in
K\backslash\{0\}$, $a_{ij}\in\bm{Z}$. We will denote the fixed
field by $K_\alpha(M)^\pi$ where $\alpha: 0\to K^{\times}\to
M_\alpha \to M\to 0$ is the associated extension of this monomial
action of $\pi$. If $\pi$ acts on $K(M)$ by purely monomial
automorphisms, we will write $K(M)^\pi$ for $K_\alpha(M)^\pi$.

Note that it is not necessary to assume that the action of $\pi$
on the $k$-field $K$ is faithful. In case $\pi$ acts faithfully on
$K$ and acts on $K(M)$ by purely monomial $k$-automorphisms, then
$K(M)^\pi$ is just the function field of some algebraic torus
defined over $K^\pi$ split by $K$ and with character group $M$
(see \cite{Vo2}).
\end{defn}

We recall some basic facts of the theory of flabby (flasque)
$\pi$-lattices developed by Voskresenskii, Endo and Miyata,
Colliot-Th\'el\`ene and Sansuc, etc. \cite{Vo2,CTS1}. We refer the
reader to \cite{Sw1,Sw2,Lo} for a quick review of the theory.

\begin{defn} \label{d2.3}
A $\pi$-lattice $M$ is called a permutation lattice if $M$ has a
$\bm{Z}$-basis permuted by $\pi$. $M$ is called an invertible (or
permutation projective) lattice, if it is a direct summand of some
permutation lattice. A $\pi$-lattice $M$ is called a flabby (or
flasque) lattice if $H^{-1}(\pi',M)=0$ for any subgroup $\pi'$ of
$\pi$ (note that all the cohomology groups in this paper, in
particular $H^{-1}(\pi',M)$, are the Tate cohomology groups).
Similarly, $M$ is called coflabby if $H^1(\pi',M)=0$ for any
subgroup $\pi'$ of $\pi$. More generally, if $N$ is a
$\bm{Z}[\pi]$-module, we will say that $N$ is $H^1$ trivial if
$H^1(\pi',N)=0$ for any subgroup $\pi'$ of $\pi$.

Let $\mathcal{L}_{\pi}$ be the set of all $\pi$-lattices. We
define a similarity relation on $\mathcal{L}_{\pi}$: If
$M_1,M_2\in \mathcal{L}_{\pi}$, then $M_1\sim M_2$ if and only if
$M_1\oplus Q_1\simeq M_2\oplus Q_2$ for some permutation lattices
$Q_1$ and $Q_2$. The set of all similarity classes is denoted by
$\mathcal{L}_{\pi}/{\sim}$; $[M]$ denotes the similarity class
containing $M$ in $\mathcal{L}_{\pi}/{\sim}$. Note that the
operation of the direct sum in $\mathcal{L}_{\pi}$ induces a
commutative monoid structure on $\mathcal{L}_{\pi}/{\sim}$.
\end{defn}

\begin{lemma}[{\cite[Lemma 8.4; Le, Proposition 1.2]{Sw1}}] \label{l2.4}

{\rm (1)} If $E$ is an invertible $\pi$-lattice, then $E$ is
flabby and coflabby.

{\rm (2)} If $E$ is an invertible $\pi$-lattice and $C$ is a
coflabby $\pi$-lattice, then any short exact sequence $0\to C\to
N\to E\to 0$ splits.
\end{lemma}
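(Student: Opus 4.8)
The plan is to reduce both parts to Shapiro's lemma together with the two elementary vanishing statements $H^{-1}(G,\bm{Z})=0$ and $H^{1}(G,\bm{Z})=0$ for an arbitrary finite group $G$. For part~(1), pick a permutation $\pi$-lattice $P$ with $P\simeq E\oplus E'$ for some $\pi$-lattice $E'$, which exists exactly because $E$ is invertible. Since Tate cohomology is additive, $H^{n}(\pi',E)$ is a direct summand of $H^{n}(\pi',P)$ for every subgroup $\pi'\le\pi$, so it suffices to show that $P$ itself is flabby and coflabby. The $\bm{Z}$-basis of $P$ permuted by $\pi$ is a fortiori permuted by $\pi'$, so $P$ restricted to $\pi'$ is again a permutation lattice, hence a finite direct sum of lattices $\bm{Z}[\pi'/\pi'']$. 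By Shapiro's lemma in its Tate form (available since $\pi'$ is finite, so induction and coinduction coincide), $H^{n}(\pi',\bm{Z}[\pi'/\pi''])\cong H^{n}(\pi'',\bm{Z})$. Finally $H^{-1}(\pi'',\bm{Z})=0$ because the norm $\bm{Z}\to\bm{Z}$ is multiplication by $|\pi''|$, hence injective, so the kernel computing $H^{-1}$ is zero; and $H^{1}(\pi'',\bm{Z})=\fn{Hom}(\pi'',\bm{Z})=0$ since $\pi''$ is finite. Summing over the orbit summands of $P$ gives $H^{-1}(\pi',P)=H^{1}(\pi',P)=0$ for all $\pi'$, which is part~(1).

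For part~(2) it is enough to prove $\fn{Ext}^{1}_{\bm{Z}[\pi]}(E,C)=0$, since this group classifies the extensions of $E$ by $C$ up to equivalence. Again write $P\simeq E\oplus E'$ with $P$ a permutation lattice, and let $\iota\colon E\hookrightarrow P$ and $\rho\colon P\twoheadrightarrow E$ satisfy $\rho\iota=1_{E}$. The induced map $\rho^{*}\colon\fn{Ext}^{1}_{\bm{Z}[\pi]}(E,C)\to\fn{Ext}^{1}_{\bm{Z}[\pi]}(P,C)$ then has the retraction $\iota^{*}$, so it is a split monomorphism, and it suffices to prove $\fn{Ext}^{1}_{\bm{Z}[\pi]}(P,C)=0$. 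Writing $P=\bigoplus_{i}\bm{Z}[\pi/\pi_{i}]$ and using $\bm{Z}[\pi/\pi_{i}]=\bm{Z}[\pi]\otimes_{\bm{Z}[\pi_{i}]}\bm{Z}$ together with $\bm{Z}[\pi]$ being free over $\bm{Z}[\pi_{i}]$, the adjunction between induction and restriction gives $\fn{Ext}^{1}_{\bm{Z}[\pi]}(\bm{Z}[\pi/\pi_{i}],C)\cong\fn{Ext}^{1}_{\bm{Z}[\pi_{i}]}(\bm{Z},C)=H^{1}(\pi_{i},C)$, which vanishes because $C$ is coflabby. Hence $\fn{Ext}^{1}_{\bm{Z}[\pi]}(P,C)=0$, so $\fn{Ext}^{1}_{\bm{Z}[\pi]}(E,C)=0$ and every such sequence splits. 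Equivalently, one can pull the sequence back along $\rho$, split the resulting extension $0\to C\to N\times_{E}P\to P\to 0$, and restrict the splitting along $\iota$ to obtain a section of $N\to E$; this is the same argument phrased with diagrams rather than $\fn{Ext}$.

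The one point needing care --- it is bookkeeping, not a genuine difficulty --- is keeping straight which version of Shapiro's lemma is used: the Tate-cohomology version in part~(1), where negative degrees matter, and the ordinary $\fn{Ext}$-adjunction in part~(2); the two agree in positive degrees, so there is no inconsistency. It is also worth noting that invertibility of $E$ supplies only some complement $E'$, not a distinguished one, which is all the argument requires. Beyond these remarks everything is routine homological algebra of $\pi$-lattices over a finite group, which is why the statement is quoted from Swan and Lenstra rather than proved from scratch.
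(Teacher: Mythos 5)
Your proof is correct. The paper does not prove Lemma 2.4 at all --- it simply quotes it from Swan [Sw1, Lemma 8.4] and Lenstra [Le, Proposition 1.2] --- and your argument (reduce to a permutation lattice via a complement, apply Shapiro's lemma in Tate form with $H^{-1}(\pi'',\bm{Z})=H^{1}(\pi'',\bm{Z})=0$ for part (1), and kill $\fn{Ext}^{1}_{\bm{Z}[\pi]}(E,C)$ through $\fn{Ext}^{1}_{\bm{Z}[\pi]}(P,C)\cong\bigoplus_i H^{1}(\pi_i,C)=0$ for part (2)) is essentially the standard proof found in those references, with the split-mono/retraction bookkeeping and the pullback reformulation both handled correctly.
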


\begin{theorem}[Endo and Miyata {\cite[Theorem 3.4; Lo, 2.10.1]{Sw2}}] \label{t2.5}
Let $\pi$ be a finite group. Then all the flabby $\pi$-lattices
are invertible if and only if all the Sylow subgroups of $\pi$ are
cyclic.
\end{theorem}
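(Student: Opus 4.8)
The plan is to reduce the statement to the case of a $p$-group and then prove the two implications separately.

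The first step is the reduction: a $\pi$-lattice $N$ is invertible if and only if $\fn{res}^{\pi}_{P}N$ is invertible for every Sylow subgroup $P$ of $\pi$. One direction is immediate since restriction sends permutation lattices to permutation lattices. For the other direction I would use that the composite of the unit $N\to\fn{Ind}^{\pi}_{P}\fn{res}^{\pi}_{P}N$ with the counit $\fn{Ind}^{\pi}_{P}\fn{res}^{\pi}_{P}N\to N$ is multiplication by $[\pi:P]$; as $P$ runs over Sylow subgroups the indices $[\pi:P]$ have gcd $1$, so a $\bm{Z}$-linear combination of the resulting splittings realizes $N$ as a direct summand of a permutation $\pi$-lattice (after using that each $\fn{res}^{\pi}_{P}N$ is a summand of a permutation $P$-lattice). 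Combined with the facts that flabbiness passes to subgroups directly from Definition \ref{d2.3}, that $\fn{Ind}^{\pi}_{P}M_{0}$ is flabby over $\pi$ whenever $M_{0}$ is flabby over $P$, and that $\fn{res}^{\pi}_{P}\fn{Ind}^{\pi}_{P}M_{0}$ contains $M_{0}$ as a direct summand (both via the Mackey formula: $H^{-1}(\pi',\fn{Ind}^{\pi}_{P}M_{0})\cong\bigoplus_{g}H^{-1}(\pi'\cap gPg^{-1},{}^{g}M_{0})=0$, and the identity double coset contributes $M_{0}$), this shows that ``every flabby $\pi$-lattice is invertible'' holds if and only if it holds for each Sylow subgroup of $\pi$. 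So it suffices to prove the theorem when $\pi$ is a $p$-group.

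For the ``if'' direction, suppose $\pi$ is a cyclic $p$-group and $M$ is flabby; I would prove $M$ is invertible by induction on $|\pi|$, the case $|\pi|=1$ being trivial. Since $\pi$ is cyclic, Tate cohomology is $2$-periodic, so $H^{-1}(\pi',M)\cong H^{1}(\pi',M)$ for every subgroup $\pi'$ and $M$ is simultaneously coflabby. When $|\pi|=p$ I would invoke the Diederichsen--Reiner classification: an indecomposable flabby $\bm{Z}[C_{p}]$-lattice is $\bm{Z}$, or $\bm{Z}[C_{p}]$, or a non-split extension $0\to\mathfrak{a}\to N_{\mathfrak{a}}\to\bm{Z}\to0$ with $\mathfrak{a}$ a fractional $\bm{Z}[\zeta_{p}]$-ideal; the first two are permutation lattices, and a direct computation with ideals (using $N_{\mathfrak{a}}\oplus N_{\mathfrak{a}^{-1}}\cong\bm{Z}[C_{p}]^{2}$) exhibits each $N_{\mathfrak{a}}$ as a summand of a permutation lattice. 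For $|\pi|>p$ I would restrict $M$ to the index-$p$ subgroup $\pi_{0}$, which is flabby over the smaller cyclic $p$-group $\pi_{0}$ hence invertible by induction, and then reassemble an invertible resolution compatible with the action of $\pi/\pi_{0}\cong C_{p}$, again using the $\bm{Z}[C_{p}]$-theory to kill the relevant obstruction (living in an $H^{2}$). This inductive step is the part I expect to be hardest: Krull--Schmidt fails for $\bm{Z}[C_{p^{k}}]$-lattices, so one has to track carefully how ideal-class data propagates up the subgroup chain; the prime-order base case, by contrast, is elementary.

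For the ``only if'' direction, suppose $\pi$ is a non-cyclic $p$-group; I must exhibit a flabby $\pi$-lattice that is not invertible. A non-cyclic $p$-group contains a copy of $C_{p}\times C_{p}$, unless $p=2$ and $\pi$ is generalized quaternion, in which case it contains $Q_{8}$. Over each of $C_{p}\times C_{p}$ and $Q_{8}$ I would write down an explicit flabby, non-invertible lattice --- for instance the kernel of a suitable surjection $\bigoplus_{i}\bm{Z}[\pi/H_{i}]\twoheadrightarrow\bm{Z}$ built from the proper subgroups $H_{i}$, for which flabbiness is a routine cohomology computation and non-invertibility is detected by a non-vanishing Tate cohomology class that would have to die on a summand of a permutation lattice (or by reduction modulo $p$) --- and then induce it up to $\pi$ along $\fn{Ind}$. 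By the Mackey computation above the induced lattice is flabby over $\pi$, and it is not invertible because its restriction to the distinguished subgroup retains the original lattice as a direct summand.

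In summary, the main obstacle is the inductive step in the ``if'' direction (flabby $\Rightarrow$ invertible over a cyclic $p$-group of order $>p$), which requires genuine integral representation theory of $\bm{Z}[C_{p^{k}}]$; identifying the explicit non-invertible flabby lattice over $C_{p}\times C_{p}$ and $Q_{8}$ in the ``only if'' direction is the second, much lesser, difficulty.
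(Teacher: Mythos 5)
The paper does not prove Theorem \ref{t2.5} at all: it is quoted from Endo and Miyata (via \cite[Theorem 3.4]{Sw2} and Lorenz), so your attempt must be measured against the published proofs. Your outer skeleton is correct and worth having: the transfer argument (splittings $N\to\fn{Ind}^\pi_P\fn{res}^\pi_P N\to N$ composing to $[\pi:P]$, with $\gcd_P[\pi:P]=1$) does show a $\pi$-lattice is invertible once its restriction to every Sylow subgroup is; Mackey--Shapiro does show induction preserves flabbiness and that $\fn{Ind}^\pi_P M_0$ cannot be invertible when $M_0$ is not; and for a non-cyclic $p$-group the kernel of a suitable surjection $\bigoplus_i\bm{Z}[\pi/H_i]\twoheadrightarrow\bm{Z}$ is indeed flabby but not coflabby (e.g.\ $\hat H^1(\pi,M)\cong\mathrm{coker}\bigl(\bigoplus_i\hat H^0(\pi,\bm{Z}[\pi/H_i])\to\hat H^0(\pi,\bm{Z})\bigr)\neq 0$), hence not invertible by Lemma \ref{l2.4}(1). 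So the ``only if'' direction and the reduction to $p$-groups are essentially complete.

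The genuine gap is the heart of the ``if'' direction: every flabby lattice over a cyclic $p$-group of order greater than $p$ is invertible. You prove only the order-$p$ case (legitimately, by citing the Diederichsen--Reiner classification and the relation $N_{\mathfrak a}\oplus N_{\mathfrak a^{-1}}\simeq\bm{Z}[C_p]^2$), and for $|\pi|>p$ you offer only the hope of ``reassembling an invertible resolution'' from the index-$p$ subgroup and ``killing an obstruction in $H^2$,'' without saying what the obstruction is or why it vanishes. Knowing that $\fn{res}^{\pi}_{\pi_0}M$ is invertible is far from knowing $M$ is: restriction discards exactly the top-level data, and the failure of Krull--Schmidt is only one of the difficulties. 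This missing step is not a routine verification but the actual content of the Endo--Miyata theorem; the known proofs (Endo--Miyata, and Swan's account in \cite{Sw2}) do not run such an induction, but instead work through the fibre squares $\bm{Z}[C_{p^k}]\to\bm{Z}[\zeta_{p^k}]$, genus and class-group arguments for lattices over these orders. As written, your proposal establishes the Sylow reduction and the easy direction, but not the theorem itself.
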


\begin{theorem}[Colliot-Th\'el\`ene and Sansuc {\cite[Lemma 8.5; Lo, Lemma 2.6.1]{Sw1}}] \label{t2.6}
For any $\pi$-lattice $M$,
there is a short exact sequence of $\pi$-lattices $0\to M\to P\to F\to 0$ where $P$ is a permutation lattice
and $F$ is a flabby lattice.
\end{theorem}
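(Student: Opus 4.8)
The plan is to reduce the statement to its dual --- a resolution of a lattice by a permutation lattice with \emph{coflabby} kernel, where the construction is transparent --- and then to dualize. For a $\pi$-lattice $N$, write $N^{\circ}=\operatorname{Hom}_{\bm{Z}}(N,\bm{Z})$ for the dual lattice with the contragredient $\pi$-action. Since every short exact sequence of $\pi$-lattices is split over $\bm{Z}$, the functor $(-)^{\circ}$ is exact on $\pi$-lattices; it sends $\bm{Z}[\pi/\pi']$ to itself, because its dual basis is again a $\pi$-permutation basis, hence it carries permutation lattices to permutation lattices; and $N^{\circ\circ}\cong N$. Moreover $(-)^{\circ}$ interchanges flabby and coflabby lattices: combining $0\to\bm{Z}\to\bm{Q}\to\bm{Q}/\bm{Z}\to 0$ with Tate duality for finite groups gives, for every subgroup $\pi'\le\pi$, a Pontryagin duality between $H^{-1}(\pi',N)$ and $H^{1}(\pi',N^{\circ})$, so one of these groups vanishes for all $\pi'$ exactly when the other does (this is standard; see \cite{Sw1,Lo}). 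Hence it suffices to show: for $L:=M^{\circ}$ there is a short exact sequence $0\to C\to Q\to L\to 0$ of $\pi$-lattices with $Q$ permutation and $C$ coflabby. Applying $(-)^{\circ}$ and using $L^{\circ}\cong M$ then yields $0\to M\to Q^{\circ}\to C^{\circ}\to 0$ with $Q^{\circ}$ permutation and $C^{\circ}$ flabby, as required.

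The crucial observation is that $H^{1}(\pi',Q')=0$ for every permutation $\pi$-lattice $Q'$ and every subgroup $\pi'\le\pi$: indeed $\operatorname{Res}_{\pi'}Q'$ is again a permutation lattice, a direct sum of lattices $\bm{Z}[\pi'/\pi'']$, and $H^{1}(\pi',\bm{Z}[\pi'/\pi''])\cong H^{1}(\pi'',\bm{Z})=\operatorname{Hom}(\pi'',\bm{Z})=0$ by Shapiro's lemma. Consequently, if $Q\twoheadrightarrow L$ is any surjection of $\pi$-lattices with kernel $C$, then for each subgroup $\pi'\le\pi$ the exact cohomology sequence reads $\cdots\to Q^{\pi'}\to L^{\pi'}\to H^{1}(\pi',C)\to H^{1}(\pi',Q)=0$, so that $H^{1}(\pi',C)$ is the cokernel of the map on invariants $Q^{\pi'}\to L^{\pi'}$. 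Therefore $C$ is coflabby as soon as $Q^{\pi'}\to L^{\pi'}$ is surjective for every subgroup $\pi'\le\pi$, and it remains to construct such a $Q$.

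This is done directly, using that $\pi$ has only finitely many subgroups. For each subgroup $\pi'\le\pi$ the invariant submodule $L^{\pi'}$ is a finitely generated (torsion-free) abelian group; fix a finite generating set $x_{\pi',1},\dots,x_{\pi',r(\pi')}$ for it. By the induction--restriction adjunction, each $x_{\pi',i}\in L^{\pi'}$ corresponds to a unique $\pi$-homomorphism $\bm{Z}[\pi/\pi']\to L$ sending the canonical generator of $\bm{Z}[\pi/\pi']$ (the coset $\pi'$, which is fixed by $\pi'$) to $x_{\pi',i}$. Let $Q$ be the direct sum of all these copies of $\bm{Z}[\pi/\pi']$, taken over all subgroups $\pi'\le\pi$ and all chosen generators, equipped with the induced map $\psi\colon Q\to L$. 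Then $Q$ is a permutation lattice; for each $\pi'$, the canonical generators of the summands indexed by $\pi'$ lie in $Q^{\pi'}$ and map under $\psi$ onto a generating set of $L^{\pi'}$, so $\psi$ restricts to a surjection $Q^{\pi'}\to L^{\pi'}$ (taking $\pi'=\{1\}$ shows in particular that $\psi$ is surjective). By the previous paragraph $C:=\ker\psi$ is coflabby, and dualizing $0\to C\to Q\to L\to 0$ completes the proof.

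The two genuinely substantive ingredients are the flabby/coflabby duality, which lets us trade the required flabby cokernel for a coflabby kernel, and the reformulation of ``coflabby kernel'' as ``surjective on all fixed submodules,'' which rests only on the vanishing $H^{1}(\pi',\text{permutation})=0$; the remaining points --- exactness and reflexivity of $(-)^{\circ}$, its stability on permutation lattices, finite generation of $L^{\pi'}$, and the induction--restriction adjunction --- are routine. I expect the main obstacle in a careful write-up to be a clean treatment of the duality: one can instead dualize the construction from the outset, so as to embed $M$ into $P:=Q^{\circ}$ directly, but that variant does not really avoid the duality, since the flabbiness of the cokernel still has to be extracted from the coflabbiness of $C$.
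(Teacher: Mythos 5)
Your proof is correct. The paper does not actually prove Theorem \ref{t2.6}; it quotes it from \cite{Sw1} (Lemma 8.5) and \cite{Lo} (2.10/2.6.1), and your argument --- map a permutation lattice onto the dual lattice so as to be surjective on the fixed points of every subgroup, observe that the kernel is coflabby because $H^{1}$ of permutation lattices vanishes, then dualize via the flabby/coflabby duality --- is essentially the standard proof found in those cited sources.
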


\begin{defn} \label{d2.7}
The exact sequence $0\to M\to P\to F\to 0$ in the above theorem is
called a flabby resolution of the $\pi$-lattice $M$. The flabby
class of $M$, denoted by $[M]^{fl}$, is defined as $[M]^{fl}=[F]
\in \mathcal{L}_{\pi}/{\sim}$. Note that $[M]^{fl}$ is
well-defined : If $[M]=[M']$, $[M]^{fl}=[F]$ and $[M']^{fl}=[F']$,
then $F\oplus Q\simeq F'\oplus Q'$ for some permutation lattices
$Q$ and $Q'$, and therefore $[F]=[F']$ (see, for example,
\cite[Lemma 8.7]{Sw1}).

When we say that $[M]^{fl}$ is invertible, we mean that
$[M]^{fl}=[E]$ for some invertible lattice $E$.
\end{defn}

\begin{theorem}[Saltman {\cite[Theorem 3.14]{Sa4}}] \label{t2.8}
Let $K$ be a finite Galois field extension of $k$ with
$\pi=Gal(K/k)$. For any $\pi$-lattice $M$, $K(M)^\pi$ is retract
$k$-rational if and only if $[M]^{fl}$ is invertible.
\end{theorem}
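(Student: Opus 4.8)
Throughout, the key observation is that since $K/k$ is Galois with group $\pi$, the action of $\pi$ on $K$ is faithful, so by the remark after Definition \ref{d2.2} the field $K(M)^\pi$ is the function field of the $k$-torus $T_M$ split by $K$ with character lattice $M$. The plan is to prove the two implications separately: the ``if'' part by an explicit construction from a flabby resolution, and the ``only if'' part by reducing to the known behaviour of torus function fields.

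Suppose first that $[M]^{fl}$ is invertible. Start from a flabby resolution $0\to M\to P\to F\to 0$ as in Theorem \ref{t2.6}. Since $[F]=[M]^{fl}=[E]$ for some invertible lattice $E$, there are permutation lattices $F_1,F_2$ with $F\oplus F_1\simeq E\oplus F_2$; taking the direct sum of this resolution with $0\to 0\to F_1\xrightarrow{=}F_1\to 0$ and renaming, I may assume the flabby term $F$ is itself invertible. Choose $F'$ with $F\oplus F'=Q$ a permutation lattice and form the pullback $X=P\times_F Q$ of $P\to F\leftarrow Q$, the second map being the projection $Q=F\oplus F'\to F$. One checks that $0\to M\to X\to Q\to 0$ and $0\to F'\to X\to P\to 0$ are both exact; since $F'$ is invertible, hence coflabby, and $P$ is permutation, hence invertible, Lemma \ref{l2.4}(2) splits the second sequence, so $X\simeq F'\oplus P=:I$ is invertible. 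Thus I obtain an exact sequence $0\to M\to I\to Q\to 0$ with $I$ invertible and $Q$ permutation. Now choose $I'$ with $I\oplus I'=Q'$ permutation and take the direct sum with $0\to I'\xrightarrow{=}I'\to 0$ to get $0\to M\oplus I'\to Q'\to Q\to 0$ with both $Q'$ and $Q$ permutation. Applying the no-name lemma --- if $\pi$ acts faithfully on a field $N\supseteq k$ and $0\to L_1\to L_2\to L_3\to 0$ is exact with $L_3$ a permutation lattice, then $N(L_2)^\pi$ is rational over $N(L_1)^\pi$, which follows orbit by orbit from Speiser's theorem and Hilbert 90 --- to this sequence shows $K(Q')^\pi$ is rational over $K(M\oplus I')^\pi$; applying the same lemma with $L_1=0$ shows $K(Q')^\pi$ is $k$-rational (as $K^\pi=k$). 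Hence $K(M\oplus I')^\pi$ is stably $k$-rational, so retract $k$-rational. Finally the split surjection $M\oplus I'\to M$ and its section induce $\pi$-equivariant $K$-algebra maps $K[M]\hookrightarrow K[M\oplus I']\to K[M]$ composing to the identity; passing to $\pi$-invariants gives finitely generated $k$-subalgebras of $K(M)^\pi$ and $K(M\oplus I')^\pi$ generating these fields, with $k$-algebra maps between them composing to the identity, so $K(M)^\pi$ is retract $k$-rational by the elementary hereditary properties of retract rationality in \cite{Sa1,Sa4}.

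For the converse, suppose $K(M)^\pi$ is retract $k$-rational. Being the function field of the $k$-torus $T_M$, it is then a direct factor of a $k$-rational field by \cite[Proposition 7.4]{CTS3} (quoted in the introduction), and by the proof there the complementary factor may be taken to be the function field of another $K/k$-torus $T'$. Then $T_M\times T'$ has $k$-rational function field, hence is stably $k$-rational, so by the Voskresenskii--Endo--Miyata criterion identifying stably $k$-rational $K/k$-tori with those of trivial flabby class we get $[M]^{fl}+[\widehat{T'}]^{fl}=[\widehat{T_M\times T'}]^{fl}=0$ in $\mathcal{L}_\pi/{\sim}$. Thus $[M]^{fl}$ is a unit in this monoid, and the units of $\mathcal{L}_\pi/{\sim}$ are exactly the classes of invertible lattices (a class $[N]$ is a unit iff $N\oplus N''$ is permutation for some $N''$, i.e.\ iff $N$ is invertible); hence $[M]^{fl}$ is invertible.

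The main obstacle is the converse direction, and in particular the content of \cite[Proposition 7.4]{CTS3} that is being invoked: that retract $k$-rationality of a torus function field already forces it to be a direct factor of a $k$-rational one. A self-contained proof of this requires taking a flasque resolution $1\to S\to R\to T_M\to 1$ with $R$ quasi-trivial (so $\widehat S$ is flabby with $[\widehat S]$ governing $[M]^{fl}$), passing to an affine model of $T_M$ on which the retract structure $\varphi,\psi$ lives, and using that $H^1$ of a flasque torus over a localized polynomial ring coincides with $H^1$ over the base field to show that the restriction of the tautological $S$-torsor to a dense open of $T_M$ is constant; one then has to descend this, through the section provided by the retract structure and using that $k$ is infinite, to the conclusion that $\widehat S$ (hence $[M]^{fl}$) is invertible. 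The ``if'' direction, by contrast, is essentially bookkeeping once the pullback step and the no-name lemma are available, the only points needing care being that the stabilizations keep the middle terms permutation and that the final ``hereditary'' step is carried out at the level of finitely generated $k$-subalgebras.
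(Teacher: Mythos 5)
The paper itself offers no proof of Theorem \ref{t2.8}: it is quoted from \cite[Theorem 3.14]{Sa4}. So your proposal has to stand on its own, and it does so only for one half. The ``if'' direction is essentially right and nicely organized: the pullback trick converting a flabby resolution with invertible flabby class into an exact sequence $0\to M\to I\to Q\to 0$ with $I$ invertible and $Q$ permutation is correct (Lemma \ref{l2.4} is applied properly), and your ``no-name lemma'' needs no outside input here, since a twisted permutation multiplicative action is linear and Theorem \ref{t5.1} applies; this does give that $K(M\oplus I')^\pi$ is stably, hence retract, $k$-rational. The weak point is the very last step. From $\pi$-equivariant maps $K[M]\to K[M\oplus I']\to K[M]$ composing to the identity you cannot simply ``pass to invariants and quote hereditary properties'': retract rationality of $K(M\oplus I')^\pi$ only provides a retract structure on a localization $A[1/s]$ of a given affine model, and the retraction of invariant rings may annihilate $s$ (geometrically, the open set carrying the retract structure can miss the image of the section), so composing the two retractions is not legitimate as stated, and no such general hereditary property is in \cite{Sa1,Sa4}. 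The gap is fixable with tools already quoted in this paper: $K(M)/K(M)^\pi$ is Galois with group $\pi$, so Lemma \ref{l3.4}(iv), applied over the base field $K(M)^\pi$, shows $K(M)^\pi$ is a dense retraction of $K(M)(I')^\pi=K(M\oplus I')^\pi$, and Lemma \ref{l3.4}(iii) then descends retract $k$-rationality (alternatively, use the faithful-action analogue of Theorem \ref{t5.3}(ii) together with Lemma \ref{l5.2}, which is exactly how the paper argues in the second proof of Theorem \ref{t5.4}).

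The converse direction is a genuine gap. You reduce it to \cite[Proposition 7.4]{CTS3}, but for torus function fields that proposition already contains the equivalence you are asked to prove (its chain of equivalences includes invertibility of the flasque class), so invoking it is circular as a proof of Saltman's theorem; and even taken as a black box in the form you quote (``retract rational $\Rightarrow$ direct factor of a $k$-rational field''), your argument needs the complementary factor to be the function field of a torus, which you assert only ``by the proof there'' --- without that refinement the Endo--Miyata/Voskresenskii stable-rationality criterion cannot be applied to the product. Your closing sketch (flasque resolution $1\to S\to R\to T_M\to 1$, triviality of the $S$-torsor over a localized polynomial ring, descent through the retract structure using that $k$ is infinite) correctly identifies where the real content lies, but it is not carried out, and that is precisely the hard half of \cite[Theorem 3.14]{Sa4}. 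As it stands, the ``only if'' part is a citation, not a proof; if citations of that strength are admitted, the entire theorem could simply be quoted, as the paper in fact does.
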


As an application of Theorem \ref{t2.8}, we prove the following theorem.

\begin{theorem}[Voskresenskii \cite{Vo1}] \label{t2.9}
Let $k$ be an infinite field with $\fn{char}k \ne 2$. If
$k(\zeta_{2^n})$ is not a cyclic extension of $k$, then
$k(C_{2^n})$ is not retract $k$-rational. Thus $k(C_{2^n})$ is not
rational over $k$.
\end{theorem}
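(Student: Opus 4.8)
The plan is to reduce the statement to a lattice-theoretic computation via Saltman's criterion (Theorem~\ref{t2.8}). Set $K = k(\zeta_{2^n})$; this is a finite abelian extension of $k$, and write $\pi = Gal(K/k)$, a subgroup of $(\bm{Z}/2^n\bm{Z})^\times$. The field $k(C_{2^n})$ is, up to the standard reduction, the function field of an algebraic torus: concretely, $k(C_{2^n}) = k(M)^\pi$ where $M = \bm{Z}[\pi]$ acted on appropriately, or more precisely one uses that $k(C_{2^n})$ is stably isomorphic over $k$ to $K(M)^\pi$ for the $\pi$-lattice $M = \bm{Z}[\zeta_{2^n}]$ (the ring of integers, viewed as a $\pi$-lattice via the Galois action), by the usual descent argument identifying $k(C_{2^n})$ with the norm-one torus or with the torus $R_{K/k}\bm{G}_m / \bm{G}_m$-type construction attached to $C_{2^n}$. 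By Theorem~\ref{t2.8}, $k(C_{2^n})$ is retract $k$-rational if and only if $[M]^{fl}$ is invertible as a $\pi$-lattice.

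First I would make precise which $\pi$-lattice $M$ governs $k(C_{2^n})$: the character lattice of the torus whose function field is $k(C_{2^n})$ is $\bm{Z}[\zeta_{2^n}]$ with its natural $\pi$-action (equivalently $\bm{Z}[x]/(\Phi_{2^n}(x))$ with $\pi \leq (\bm{Z}/2^n)^\times$ permuting roots). Then I would invoke Theorem~\ref{t2.8} to replace the rationality question by the question of whether $[M]^{fl}$ is invertible. The heart of the argument is then purely about $\pi$-lattices: I claim that if $\pi \leq (\bm{Z}/2^n\bm{Z})^\times$ is \emph{not} cyclic — which, by the structure of $(\bm{Z}/2^n\bm{Z})^\times \cong \bm{Z}/2 \times \bm{Z}/2^{n-2}$ for $n \geq 3$, happens exactly when $\pi$ contains the Klein four-group, i.e. $k(\zeta_{2^n})/k$ is non-cyclic — then $[M]^{fl}$ is \emph{not} invertible.

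To prove this lattice statement I would restrict to a subgroup $\pi' \cong C_2 \times C_2$ inside $\pi$ (restriction preserves flabbiness and invertibility, and a non-invertible flabby class can often be detected after restriction). Over $C_2 \times C_2$, the lattice $\bm{Z}[\zeta_{2^n}]$ decomposes or contains a sublattice which, up to stable equivalence, is the well-known non-invertible flabby lattice for $C_2 \times C_2$ (the one realizing the nontrivial element of the relevant class group, detected by a cohomological or $\hat H^{-1}$/$\hat H^1$ computation, or by the fact that $V = \ker(\bm{Z}[C_2\times C_2] \to \bm{Z})$ has $[V]^{fl}$ non-invertible — this is essentially the obstruction behind the non-rationality of $k(C_2\times C_2)$-type fields). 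Concretely one computes $\hat H^1(\pi', M^\circ)$ for $M^\circ$ a flabby lattice in a flabby resolution of $M$ and shows it cannot vanish for all subgroups simultaneously unless $\pi'$ acts cyclically, contradicting invertibility via Lemma~\ref{l2.4}(1) (invertible $\Rightarrow$ coflabby, forcing $\hat H^1(\pi', -)=0$). Finally, the last sentence of the theorem is immediate: ``$k$-rational'' $\Rightarrow$ ``retract $k$-rational'' (stated in the introduction), so failure of retract $k$-rationality forces failure of $k$-rationality.

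The main obstacle I anticipate is the explicit identification and bookkeeping of the flabby class $[M]^{fl}$ for $M = \bm{Z}[\zeta_{2^n}]$ as a $\pi$-lattice, and pinning down exactly the non-invertible $C_2\times C_2$-lattice it restricts to — i.e. verifying that the cyclotomic lattice genuinely carries the Chevalley-module obstruction and does not split off enough permutation summands to kill it. Everything else (the descent identifying $k(C_{2^n})$ with $K(M)^\pi$, the appeal to Theorems~\ref{t2.8} and \ref{t2.5}, and the final implication) is routine.
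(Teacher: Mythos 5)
Your overall strategy---reduce via Saltman's criterion (Theorem \ref{t2.8}) to the non-invertibility of a flabby class, detected on the Klein four subgroup of $\pi=\fn{Gal}(k(\zeta_{2^n})/k)$---is the same as the paper's, but there is a genuine gap at exactly the step you wave through: identifying $k(C_{2^n})$ (up to a rational variable) with a multiplicative invariant field $K(M)^\pi$ for a specific $\pi$-lattice $M$. The paper proves this by an explicit computation: diagonalize the regular representation over $K=k(\zeta_{2^n})$ via $y_i=\sum_j\zeta^{-ij}x(\sigma^j)$, take $\sigma$-invariants of the multiplicative lattice $N=\langle y_1,\ldots,y_{2^n-1}\rangle$ by forming $M=\fn{Ker}(\Phi)$ with $\Phi(y)=\sigma(y)/y\in\langle\zeta\rangle$, and identify $M$ with Lenstra's lattice $I_q$, $q=2^n$, of rank $2^n-1$; then $k(C_{2^n})=K(M)^\pi(y_0)$ and Lemma \ref{l3.4}(i) disposes of the extra variable. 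Your candidate lattice $\bm{Z}[\zeta_{2^n}]$ (rank $2^{n-1}$) is not this lattice, and it is not even stably equivalent to it: $I_q$ is coflabby ($H^1(\pi',I_q)=0$ for every subgroup $\pi'$, by Lenstra), whereas for instance $H^1(\langle -1\rangle,\bm{Z}[\zeta_8])\simeq\bm{Z}/2\bm{Z}$ (generated by the class of $\zeta_8^2$); since $H^1$ vanishes on permutation lattices, it is a stable invariant, so already for $n=3$ your lattice lies in a different similarity class. The other descriptions you offer (the norm-one torus, an $R_{K/k}\bm{G}_m/\bm{G}_m$-type torus) have character lattices of rank $[K:k]-1$ and are not the right objects either. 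At best you would have to show that your lattice has the same \emph{flabby class} as $I_q$, a nontrivial claim you do not address; as written, the reduction step fails.

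The second half is also only a sketch, and of a different (harder) computation than the one the paper actually does. The paper does not compute $H^1$ of the flabby quotient in a flabby resolution; it quotes Lenstra's two facts---$I_q$ is coflabby and $H^{-1}(\pi_0,I_q)\simeq\bm{Z}/2\bm{Z}$ for the Klein subgroup $\pi_0$---and then uses Lemma \ref{l2.4}: if $F$ in a flabby resolution $0\to M\to P\to F\to 0$ were invertible, coflabbiness of $M$ would split the sequence, making $M$ a direct summand of a permutation lattice, hence invertible, hence flabby, contradicting $H^{-1}(\pi_0,M)\neq 0$. (Note this trick exploits coflabbiness of $I_q$, which your lattice does not have.) You yourself flag the identification of the lattice and the cohomological bookkeeping as the ``main obstacle''; that obstacle is precisely the content of the proof, so what you have is a correct plan of attack with the essential verifications missing and with an incorrect lattice plugged in. The final sentence (retract rationality fails $\Rightarrow$ rationality fails) is indeed immediate, as you say.
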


\begin{proof}
Write $C_{2^n}=\langle \sigma\rangle$ and $V=\bigoplus_{0\le i\le
2^n-1} k\cdot x(\sigma^i)$\vspace*{2pt} be the regular
representation space of $C_{2^n}$. Then
$k(C_{2^n})=k(x(\sigma^i):0\le i\le
2^n-1)^{\langle\sigma\rangle}$.

Let $\zeta=\zeta_{2^n}$ and $\pi=\fn{Gal}(k(\zeta)/k)$.
Extend the actions of $\sigma$ and $\pi$ to $k(\zeta)(x(\sigma^i):0\le i\le 2^n-1)$ so that $\pi$
acts trivially on $x(\sigma^i)$ and $\sigma$ acts trivially on $k(\zeta)$.
For $0\le i\le 2^n-1$, define
\[
y_i=\sum_{0\le j\le 2^n-1}\zeta^{-ij}\cdot x(\sigma^j)\in \bigoplus_{0\le j\le 2^n-1} k(\zeta)\cdot x(\sigma^j).
\]

It follows that $k(C_{2^n})=k(x(\sigma^i):0\le i\le 2^n-1)^{\langle\sigma\rangle} %
=\{k(\zeta)(x(\sigma^i):0\le i\le 2^n-1)^\pi\}^{\langle \sigma\rangle} %
=k(\zeta)(y_i:0\le i\le 2^n-1)^{\langle \sigma,\pi\rangle}$.

It is easy to see that $\sigma\cdot y_i=\zeta^i y_i$ for $0\le
i\le 2^n-1$. Moreover, if $\tau_t\in \pi$ is defined by
$\tau_t(\zeta)=\zeta^t$, then $\tau_t(y_i)=y_{ti}$ for $0\le i\le
2^n-1$ (note that the subscript $ti$ of $y_{ti}$ is taken modulo
$2^n$). It follows that $k(\zeta)(y_i: 0\le i\le 2^n-1)^{\langle
\sigma,\pi\rangle}$ $=k(\zeta)(y_i: 1\le i\le 2^n-1)^{\langle
\sigma,\pi\rangle}(y_0)$.

Let $N$ be the multiplicative subgroup of $k(\zeta)(y_i:1\le i\le
2^n-1)\backslash \{0\}$ generated by $y_1, y_2,\ldots,y_{2^n-1}$.
Since $\pi$ acts on $N=\langle y_i: 1\le i\le 2^n-1\rangle$, $N$
is a $\pi$-lattice. Similarly, $\pi$ acts on $\langle \zeta\rangle
\simeq \bm{Z}/2^n\bm{Z}$; thus we may regard $\bm{Z}/2^n\bm{Z}$ as
a finite $\bm{Z}[\pi]$-module (note that $\tau_t \cdot
\bar{i}=\bar{ti}$ for any $\bar{i} \in \bm{Z}/2^n\bm{Z}$). Define
a $\pi$-morphism $\Phi$ by
\[
\Phi: N\to \bm{Z}/2^n\bm{Z}
\]
where, for any monomial $y=\prod_{1\le j\le 2^n-1}
y_j^{\lambda_j}$\vspace*{2pt} with $\lambda_j\in \bm{Z}$, define
$\Phi(y)=\sigma(y)/y$ (note that $\sigma(y)/y \in \langle
\zeta\rangle$, and thus can be regarded as an element of
$\bm{Z}/2^n\bm{Z}$).

Define $M=\fn{Ker}(\Phi)$, which is a $\pi$-lattice. It follows
that $k(\zeta)(y_i: 1\le i\le 2^n-1)^{\langle \sigma,\pi\rangle}$
$=\{k(\zeta)(y_i:1\le i\le 2^n-1)^{\langle
\sigma\rangle}\}^\pi=k(\zeta)(M)^\pi$.

We compare the above construction with that in \cite[p.310]{Le}.
It is clear that $N\simeq \bm{Z}^{C(q)}$ where $q=2^n$ and
$\bm{Z}^{C(q)}$ is Lenstra's notation. Thus $M\simeq I_q$ in
Lenstra's notation. By \cite[Proposition 3.1 and Propositiion
3.2]{Le}, $H^1(\pi',I_q)=0$ for any subgroup $\pi'$ of $\pi$ and
$H^{-1}(\pi_0,I_q)\simeq \bm{Z}/2\bm{Z}$ if $\pi_0$ is the unique
subgroup of $\pi$ isomorphic to $C_2\times C_2$ (also see
\cite[p.79]{Vo2}). Thus $I_q (\simeq M)$ is coflabby, but not
flabby.

Let $0\to M\to P\to F\to 0$ by any flabby resolution of $M$ so
that $P$ is permutation and $F$ is flabby. Suppose that $F$ is
invertible. By Lemma \ref{l2.4}, this exact sequence splits. Thus
$P\simeq M\oplus F$. In particular, $M$ is invertible. By Lemma
\ref{l2.4} again, $M$ is flabby. This leads to a contradiction to
the previous assertion that $M$ is not flabby.

Apply Theorem \ref{t2.8}.
We find that $k(\zeta)(M)^\pi$ ($=k(C_{2^n})$) is not retract $k$-rational.
\end{proof}

\begin{coro} \label{c2.10}
Let $k$ be a field such that $k(\zeta_{2^n})$ is not cyclic over
$k$. Let $G=H \rtimes C_{2^n}$ where $H$ is a normal subgroup of
the finite group $G$ with $C_{2^n}$ acting on it. Then $k(G)$ is
not retract $k$-rational.
\end{coro}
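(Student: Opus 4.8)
The plan is to reduce Corollary \ref{c2.10} to Theorem \ref{t2.9} by exhibiting $k(C_{2^n})$ as a subfield of $k(G)$ over which $k(G)$ is ``rational enough'' to transport non-retract-rationality downward. Concretely, write $G = H \rtimes C_{2^n}$ with $C_{2^n} = \langle \sigma \rangle$. The regular representation space of $G$ is $V = \bigoplus_{g \in G} k \cdot x_g$, and $k(G) = k(x_g : g \in G)^G$. I would first restrict the action: inside $G$ sits the subgroup $C_{2^n}$, and as a $C_{2^n}$-set, $G$ is a disjoint union of $|H|$ copies of the regular $C_{2^n}$-set (the cosets $Hc$ are permuted, but more to the point each right coset $gC_{2^n}$ is a free $C_{2^n}$-orbit, and there are $|G|/2^n = |H|$ of them). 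Hence $k(x_g : g \in G)$ is, as a field with $C_{2^n}$-action, a rational function field in $|H|$ many ``blocks'', each block being a copy of the regular representation $k(x(\sigma^i) : 0 \le i \le 2^n-1)$ of $C_{2^n}$.

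The key step is then the following: $k(G) = k(V)^G = (k(V)^H)^{C_{2^n}}$, and I would like to compare this with $k(C_{2^n}) = k(x(\sigma^i))^{\langle \sigma \rangle}$. The cleanest route is a ``no-name''-type argument: since $H$ acts faithfully and linearly on $V$, and $C_{2^n}$ normalizes $H$, one analyzes $k(V)^H$ as a $C_{2^n}$-field. Picking one free $C_{2^n}$-orbit of coordinates — say the block $\{x_{\sigma^i}\}_{0 \le i \le 2^n-1}$ corresponding to the subgroup $C_{2^n} \subset G$ itself — the linear action of $C_{2^n}$ on this block is exactly the regular representation, and it commutes with (is disjoint from) the $H$-action on the remaining coordinates in the sense that $H$ fixes these particular $x_{\sigma^i}$ pointwise only if $H \cap C_{2^n} = 1$, which holds here. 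The upshot I want is $k(G) \cong k(C_{2^n})(z_1, \dots, z_r)$ for some indeterminates $z_j$, i.e.\ $k(G)$ is $k(C_{2^n})$-rational; equivalently $k(G)$ is stably (indeed purely) rational over $k(C_{2^n})$.

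Granting that, the conclusion is immediate: $k(C_{2^n})$ is not retract $k$-rational by Theorem \ref{t2.9} (using the hypothesis that $k(\zeta_{2^n})$ is not cyclic over $k$, and $\fn{char} k \ne 2$, which is forced since $\zeta_{2^n}$ must exist in a separable extension), and if $k(G)$ were retract $k$-rational then, being $k(C_{2^n})(z_1,\dots,z_r)$, it would follow that $k(C_{2^n})$ is retract $k$-rational too — this is the easy direction of the transitivity circle of ideas, namely that a field $K$ with $K(z_1,\dots,z_r)$ retract $k$-rational is itself retract $k$-rational, which follows by specializing the $z_j$ (here one uses that $k$ is infinite, as in Definition \ref{d1.1}). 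That contradiction proves $k(G)$ is not retract $k$-rational.

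The main obstacle is establishing the rationality statement $k(G) \cong k(C_{2^n})(z_1,\dots,z_r)$ cleanly. The honest technical content is a descent along the faithful linear $H$-action: one wants to say that $k(V)^H$, as a $C_{2^n}$-equivariant field extension of $k(x(\sigma^i))^{\langle\sigma\rangle}$ built from the regular $G$-representation, is $C_{2^n}$-equivariantly a rational function field over the single regular $C_{2^n}$-block — a ``no-name lemma'' relative to the normal subgroup $H$. I expect this to go through because the $G$-representation $V$ decomposes, as a $k[C_{2^n}]$-module with semilinear $H$-action, into the block coming from $C_{2^n} \hookrightarrow G$ plus a complement on which the combined $\langle H, \sigma\rangle$-action is again monomial/linear and can be ``rationally trivialized'' over the invariants of the first block; but verifying the disjointness and carrying out the invariant-theoretic bookkeeping (ensuring no denominators obstruct the section and that the transcendence degree matches $|G| - 2^n$) is the step requiring care. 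An alternative, possibly shorter, route would invoke Theorem \ref{t2.8}: pass to $k(\zeta_{2^n})$, express both $k(G)$ and $k(C_{2^n})$ via flabby classes of associated lattices, and show the relevant lattice for $G$ has flabby class invertible iff that for $C_{2^n}$ does — but that again rests on the same block-decomposition of the $G$-lattice.
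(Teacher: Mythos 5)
There is a genuine gap, and it sits exactly at the step you yourself flag as the ``honest technical content.'' Your argument hinges on the claim that $k(G)\cong k(C_{2^n})(z_1,\dots,z_r)$, justified by a no-name-type decomposition in which $C_{2^n}$ acts on the block $\{x_{\sigma^i}\}$ by its regular representation while $H$ is ``disjoint'' from it. That disjointness is false: in the regular representation $h\cdot x_g=x_{hg}$, so for $1\neq h\in H$ one has $h\cdot x_{\sigma^i}=x_{h\sigma^i}$, which lies outside the block $\{x_{\sigma^i}:0\le i\le 2^n-1\}$ (the condition $H\cap C_{2^n}=1$ does not make $H$ fix these coordinates; it only makes the $C_{2^n}$-orbits free). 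Thus there is no $H$-trivial $C_{2^n}$-regular block, and no no-name argument relative to $H$ gets off the ground. Worse, the conclusion you are aiming for cannot be true in the generality in which you argue it: your sketch uses no hypothesis on $H$ (nor on $\zeta_{2^n}$), so if it worked it would show $k(H\rtimes C_m)$ is rational over $k(C_m)$ for every normal $H$ and every base field. Taking $k=\bm{C}$ and $G=H\times C_2$ with $\bm{C}(H)$ not retract $\bm{C}$-rational (Saltman's or Bogomolov's examples), this would make $\bm{C}(G)$ rational over $\bm{C}(C_2)$, hence $\bm{C}$-rational, contradicting Lemma \ref{l3.4}(ii), which shows $\bm{C}(G)$ is not even retract $\bm{C}$-rational. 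Your fallback via Theorem \ref{t2.8} does not repair this: that theorem concerns $K(M)^\pi$ with $\pi=\mathrm{Gal}(K/k)$ acting on a lattice, and $k(G)$ for a general semidirect product $H\rtimes C_{2^n}$ is not of that form (the lattice-theoretic description in the proof of Theorem \ref{t2.9} is special to the abelian group $C_{2^n}$).

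The paper's proof avoids any rationality claim of $k(G)$ over $k(C_{2^n})$: it simply invokes Theorem \ref{t3.5}(1) (Saltman), which says that for $G=N\rtimes G_0$, retract $k$-rationality of $k(G)$ forces retract $k$-rationality of $k(G_0)$; applied with $G_0=C_{2^n}$ this contradicts Theorem \ref{t2.9}. That descent is proved through generic Galois extensions (Theorem \ref{t1.2}) and is genuinely weaker than, and not deducible from, the pure transcendence statement you propose. Your final step (retract rationality of $K(z_1,\dots,z_r)$ implies that of $K$, i.e.\ Lemma \ref{l3.4}(i)) is fine, but the input it needs is precisely what is missing; to make your route work you would have to prove something like Theorem \ref{t3.5}(1) itself.
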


\begin{proof}
Suppose $k(G)$ is retract $k$-rational. By Theorem \ref{t3.5},
$k(C_{2^n})$ is retract $k$-rational, which contradicts to Theorem
\ref{t2.9}.
\end{proof}

\begin{remark}
Let $M$ be the $\pi$-lattice defined in the proof of Theorem
\ref{t2.9}. Voskresenskii showed that $[M]^{fl}$, the flabby class
of $M$, is not invertible \cite[p.97--99; Vo2, p.79]{Vo1}; the
same result was obtained by Lenstra \cite[p.310--311]{Le}. Saltman
showed that $\bm{Q}(G)$ is not retract $\bm{Q}$-rational if $G$ is
a finite abelian group containing an element of order $2^n$ with
$n\ge 3$ by using Wang's counter-example to Grunwald Theorem
\cite[Theorem 5.11]{Sa2}. Sonn generalized Saltman's Theorem and
proved that, $\bm{Q}(G)$ is not retract $\bm{Q}$-rational if $G$
is any finite group containing a normal subgroup $H$ such that
$G/H \simeq C_{2^n}$ with $n\ge 3$ \cite{So}.
\end{remark}

%--------------------------------------S3
\section{Criteria of retract rationality}

In this section we recall several results about retract rationality,
which will be used subsequently.

First we define the unramified Brauer group of a $k$-field $L$.

\begin{defn}[{\cite[p.226]{Sa3,Sa5}}] \label{d3.1}
Let $L$ be a $k$-field.
Define the unramified Brauer group of $L$ over $k$,
denoted by $\fn{Br}_{v,k}(L)$, as $\fn{Br}_{v,k}(L)=\bigcap_R \fn{Br}(R)\subset \fn{Br}(L)$
where $R$ runs over all discrete $k$-valuation rings whose quotient fields are equal to $L$,
and $\fn{Br}(R)$ denotes the Brauer group of $R$.
See \cite[Section 3; Sa7, Theorem 12]{Bo} for more results about unramified Brauer groups.
\end{defn}

\begin{theorem}[Saltman {\cite[Section 2]{Sa5}}] \label{t3.2}
{\rm (i)} Let $L$ be a $k$-field. If $L$ is retract $k$-rational,
then $\fn{Br}_{v,k}(L)\simeq \fn{Br}(k)$. In particular, when $k$
is algebraically closed and $L$ is retract $k$-rational, then
$\fn{Br}_{v,k}(L)=0$.

{\rm (ii)} If $K\subset L$ are $k$-fields and $L$ is retract $K$-rational,
then $\fn{Br}_{v,k}(K)\simeq \fn{Br}_{v,k}(L)$.
\end{theorem}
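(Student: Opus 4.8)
The plan is to prove the relative statement (ii) first and then recover (i) as the special case $K=k$, using that $\fn{Br}_{v,k}(k)=\fn{Br}(k)$ because there are no discrete $k$-valuation rings whose quotient field equals $k$. Two standard facts about unramified Brauer groups will serve as inputs. (a) Adjoining an indeterminate does not change them: $\fn{Br}_{v,k}(E(x))=\fn{Br}_{v,k}(E)$ for every $k$-field $E$; this follows from the Faddeev residue exact sequence for $E(x)/E$ together with the observation that a class of $\fn{Br}(E)$ is unramified at a discrete $k$-valuation $w$ of $E(x)$ with $w|_E$ nontrivial if and only if it is unramified at $w|_E$. By induction one gets $\fn{Br}_{v,k}(K(X_1,\ldots,X_n))=\fn{Br}_{v,k}(K)$, and for $K=k$ this specializes to $\fn{Br}_{v,k}(k(X_1,\ldots,X_n))=\fn{Br}(k)$. (b) Purity for Brauer groups of regular rings: if $A$ is a regular affine $k$-domain with quotient field $L$, then $\fn{Br}(A)\hookrightarrow\fn{Br}(L)$, and any class of $\fn{Br}(L)$ that is unramified at every height-one prime of $A$ lies in the image of $\fn{Br}(A)$. (In characteristic $0$, which covers all the applications in this paper, both inputs are classical; in positive characteristic one restricts to the prime-to-$p$ part or appeals to the results of Gabber and Kato.)

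I first record the functoriality that drives the argument. A $k$-embedding $E\hookrightarrow E'$ of $k$-fields induces $\fn{Br}_{v,k}(E)\to\fn{Br}_{v,k}(E')$, compatibly with $\fn{Br}(E)\to\fn{Br}(E')$: a discrete $k$-valuation of $E'$ restricts to a discrete $k$-valuation of $E$ or to the trivial valuation, and the residue map is functorial, so unramified classes remain unramified. Now assume $L$ is retract $K$-rational and fix data as in Definition \ref{d1.1} over the ground field $K$: an affine $K$-domain $A$ with quotient field $L$, a localized polynomial ring $P=K[X_1,\ldots,X_n][1/f]$ with quotient field $F:=K(X_1,\ldots,X_n)$, and $K$-algebra maps $\varphi\colon A\to P$, $\psi\colon P\to A$ with $\psi\varphi=1_A$. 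Then $\varphi$ is injective, extends to a $k$-embedding $\tilde\varphi\colon L\hookrightarrow F$, and induces $\tilde\varphi_{\#}\colon\fn{Br}_{v,k}(L)\to\fn{Br}_{v,k}(F)$, which by input (a) I identify with a map $\fn{Br}_{v,k}(L)\to\fn{Br}_{v,k}(K)$. Write $\iota\colon\fn{Br}_{v,k}(K)\to\fn{Br}_{v,k}(L)$ for the map induced by $K\hookrightarrow L$. The composite $\tilde\varphi_{\#}\circ\iota$ is induced by $K\hookrightarrow L\hookrightarrow F$, i.e.\ by $K\hookrightarrow F$, hence is the canonical isomorphism $\fn{Br}_{v,k}(K)\xrightarrow{\ \sim\ }\fn{Br}_{v,k}(F)\cong\fn{Br}_{v,k}(K)$; so $\iota$ is a split injection, and only its surjectivity remains.

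For surjectivity, take $\beta\in\fn{Br}_{v,k}(L)$. The regular locus of $\fn{Spec}A$ is a nonempty open set, so it contains a basic open $\fn{Spec}A[1/g]$ with $0\ne g\in A$; writing $\varphi(g)=h/f^m$ with $h\ne 0$ and replacing $A$ by $A[1/g]$, $P$ by $P[1/\varphi(g)]=K[X_1,\ldots,X_n][1/(f\cdot h)]$, and $\varphi,\psi$ by their localizations (legitimate since $\psi(\varphi(g))=g$), I may assume $A$ is regular, with $L$ and $F$ unchanged. Each height-one prime of $A$ gives a discrete $k$-valuation ring of $L$, and $\beta$ is unramified at all of them, so by input (b) there is $\tilde\beta\in\fn{Br}(A)$ mapping to $\beta$ in $\fn{Br}(L)$. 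Now $\varphi_{\#}(\tilde\beta)\in\fn{Br}(P)$ maps to $\tilde\varphi_{\#}(\beta)\in\fn{Br}(F)$; since $\tilde\varphi_{\#}(\beta)$ is unramified over $k$, input (a) exhibits it as the image of a unique $\beta_0\in\fn{Br}_{v,k}(K)\subseteq\fn{Br}(K)$. Because $P$ is regular, $\fn{Br}(P)\hookrightarrow\fn{Br}(F)$, so $\varphi_{\#}(\tilde\beta)$ is precisely the image of $\beta_0$ under $\fn{Br}(K)\to\fn{Br}(P)$. Applying the $K$-algebra map $\psi$ and using $\psi\varphi=1_A$ gives $\tilde\beta=\psi_{\#}\varphi_{\#}(\tilde\beta)$, which is the image of $\beta_0$ under $\fn{Br}(K)\to\fn{Br}(A)$; hence $\beta$ is the image of $\beta_0$ under $\fn{Br}(K)\to\fn{Br}(L)$, that is, $\beta=\iota(\beta_0)$. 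This proves (ii); taking $K=k$ yields (i), so $\fn{Br}_{v,k}(L)\simeq\fn{Br}_{v,k}(k)=\fn{Br}(k)$, and $\fn{Br}(k)=0$ when $k$ is algebraically closed.

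I expect the crux to be input (b) together with the bookkeeping that keeps the smooth model a retract of a localized polynomial ring: one must descend from the given, possibly singular, affine domain $A$ to a smooth principal localization while simultaneously adjusting $P$, $\varphi$, and $\psi$, and then use Brauer-group purity to spread the everywhere-unramified class $\beta$ over this smooth model. The identity $\fn{Br}_{v,k}(k(X_1,\ldots,X_n))=\fn{Br}(k)$ and its relative form in input (a) are the other essential nonformal ingredients; with both in hand, the rest is a diagram chase governed entirely by $\psi\varphi=1_A$.
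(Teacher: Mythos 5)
The paper does not actually prove Theorem \ref{t3.2}; it is quoted from Saltman [Sa5, Section 2], so there is no in-paper argument to compare against. Your proof is essentially a correct modern rendering of the same circle of ideas, and the core mechanism — split injectivity of $\fn{Br}_{v,k}(K)\to\fn{Br}_{v,k}(L)$ from the retraction, identification of unramified classes over $F=K(X_1,\ldots,X_n)$ with $\fn{Br}_{v,k}(K)$, and transport back along $\psi$ using $\psi\varphi=1_A$ — is sound. Two comments. First, your input (b) (purity for a regular model $A$) is much stronger than what the argument needs, and it is exactly the step forcing your characteristic-$p$ hedge. You do not need $\beta$ to extend over all of a regular $A$: any class $\beta\in\fn{Br}(L)$ is represented by a central simple $L$-algebra, which spreads out to an Azumaya algebra over a principal localization $A[1/a]$, and the retraction survives this localization exactly as in your regularization step (replace $P$ by $P[1/\varphi(a)]$, still a localized polynomial ring, with $\psi(\varphi(a))=a$ invertible). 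The unramifiedness of $\beta$ is then used only where it must be used, namely to see that $\tilde\varphi_{\#}(\beta)\in\fn{Br}(F)$ lies in $\fn{Br}_{v,k}(F)=\fn{Br}_{v,k}(K)$; after that, injectivity of $\fn{Br}(P)\hookrightarrow\fn{Br}(F)$ (the localized polynomial ring is regular) and the diagram chase finish it. This removes both the purity input and the regularization of $A$. Second, with that simplification the only nonformal ingredient left is your input (a), $\fn{Br}_{v,k}(E(x))=\fn{Br}_{v,k}(E)$; since the theorem is stated for an arbitrary infinite field $k$, one should either prove (a) with the intersection-of-$\fn{Br}(R)$ definition used in Definition \ref{d3.1} (where no residue maps and hence no prime-to-$p$ restriction are needed, via $\fn{Br}(\bm{P}^1_E)=\fn{Br}(E)$ and the codimension-one description of the Brauer group of a regular one-dimensional scheme) or explicitly restrict to the characteristic-zero case relevant to this paper, as Saltman's original treatment is what covers the general statement.
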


\medskip
Note that $\fn{Br}_{v, \bm{C}}(L)=0$ is just a necessary condition
for a $\bm{C}$-field $L$ to be retract $\bm{C}$-rational. It is
not a sufficient condition. In fact, Peyre shows that, there is a
group $G$ of order $p^{12}$ such that $\bm{C}(G)$ is not retract
$\bm{C}$-rational but $\fn{Br}_{v, \bm{C}}(\bm{C}(G))=0$
\cite{Pe}.

\begin{defn}[\cite{Sa5}] \label{d3.3}
Let $K\subset L$ be $k$-field. $K$ is called a dense retraction of
$L$ if there is a regular affine $K$-algebra $R$ such that (i) the
quotient field of $R$ is $L$, and (ii) for any $r\in
R\backslash\{0\}$, there is a $K$-algebra morphism
$\varphi:R[1/r]\to K$.

We will prove in Lemma \ref{l5.2} that, if $L$ is retract
$k$-rational, then $k$ is a dense retraction of $L$.
\end{defn}

Now consider retract rationality. We reformulate Saltman's results
of \cite{Sa2} in terms of retract rationality by applying Theorem
\ref{t1.2}.

\begin{lemma} \label{l3.4}
{\rm (i) (\cite[Proposition 3.6]{Sa4})} Let $L$ be a $k$-field,
$L(x_1,\ldots, x_n)$ be the rational function field over $L$. Then
$L$ is retract $k$-rational if and only if so is
$L(x_1,\ldots,x_n)$.

{\rm (ii) (\cite[Theorem 1.5 and Theorem 3.1]{Sa2})} Let $G=G_1
\times G_2$. Then $k(G)$ is retract $k$-rational if and only if so
are $k(G_1)$ and $k(G_2)$.

{\rm (iii) (\cite[Lemma 1.1]{Sa5})} Let $K\subset L$ be
$k$-fields. If $L$ is retract $k$-rational and $K$ is a dense
retraction of $L$, then $K$ is retract $k$-rational.

{\rm (iv) (\cite[Theorem 1.3]{Sa5})} Let $K$ be a finite Galois
field extension of $k$ with $\pi=Gal(K/k)$, and $M$ be any
$\pi$-lattice. Then $k$ is a dense retraction of $K(M)^\pi$.
\end{lemma}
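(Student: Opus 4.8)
All four assertions are reformulations of theorems of Saltman, so the plan is not to reprove them in full but to show how each is either formal or a consequence of the definition of retract rationality together with Theorem~\ref{t1.2}; parts (iii) and (iv) I would quote from \cite{Sa5}. It is convenient to treat the parts in the order (iii), (iv), (i), (ii), since the nontrivial half of (i) rests on (iii) and (iv), and (ii) is handled through Theorem~\ref{t1.2}.

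For (i), the implication ``$L$ retract $k$-rational $\Rightarrow L(x_1,\ldots,x_n)$ retract $k$-rational'' is purely formal: given $\varphi\colon A\to k[X_1,\ldots,X_N][1/f]$ and $\psi\colon k[X_1,\ldots,X_N][1/f]\to A$ with $\psi\varphi=1_A$ and $\fn{Frac}(A)=L$, adjoining $n$ fresh indeterminates yields $A[x_1,\ldots,x_n]\to k[X_1,\ldots,X_N,x_1,\ldots,x_n][1/f]\to A[x_1,\ldots,x_n]$, whose target is again a localized polynomial ring, whose composite is still the identity, and whose quotient field is $L(x_1,\ldots,x_n)$. For the converse I would use the other parts of the lemma: part (iv) with $\pi=\{1\}$ and $M=\bm{Z}^n$ says that $L$ is a dense retraction of $L(\bm{Z}^n)=L(x_1,\ldots,x_n)$, and in this special case the claim is elementary --- take $R=L[x_1,\ldots,x_n]$, a regular affine $L$-algebra with quotient field $L(x_1,\ldots,x_n)$, and note that, $L$ being infinite, every nonzero $r\in R$ is nonvanishing at some point of $L^n$, which gives the evaluation homomorphism $R[1/r]\to L$. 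Part (iii) then promotes ``$L(x_1,\ldots,x_n)$ retract $k$-rational'' to ``$L$ retract $k$-rational''.

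For (ii), I would pass through Theorem~\ref{t1.2}: $k(G)$ is retract $k$-rational if and only if there is a generic $G$-Galois extension over $k$, so the statement becomes the assertion that a generic $(G_1\times G_2)$-Galois extension over $k$ exists if and only if generic $G_i$-Galois extensions over $k$ exist for $i=1,2$. One direction is the accessible one: the tensor product over $k$ of a generic $G_1$-extension and a generic $G_2$-extension is a generic $(G_1\times G_2)$-extension over the corresponding product of localized polynomial rings, and versality can be checked factor by factor. The reverse direction --- that retract $k$-rationality of $k(G)$ forces it for each $k(G_i)$ --- is the real content, and here I would simply invoke \cite[Theorem 1.5 and Theorem 3.1]{Sa2}.

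Finally, (iii) is \cite[Lemma 1.1]{Sa5}: given the retraction $A\to k[X_1,\ldots,X_N][1/f]\to A$ of $L$ and a regular affine $K$-algebra $R$ with $\fn{Frac}(R)=L$ witnessing the dense retraction, one clears denominators so that $A$ and $R$ lie in a common regular affine $k$-subalgebra of $L$ and then composes the retraction of $L$ with a $K$-specialization furnished by the dense-retraction hypothesis to obtain a $k$-algebra retract whose quotient field is $K$. Part (iv) is \cite[Theorem 1.3]{Sa5}: from a flabby resolution $0\to M\to P\to F\to 0$ (Theorem~\ref{t2.6}) one constructs an explicit regular affine $k$-model of $K(M)^\pi$ on which, $k$ being infinite, each nonzero function is invertible at some $k$-point. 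I expect this last construction to be the genuine obstacle in the lemma; everything else is either formal (the first half of (i)), a translation via Theorem~\ref{t1.2} (part (ii)), or the elementary $\pi=\{1\}$ case of (iv) used in (i).
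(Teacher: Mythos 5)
Your proposal is correct and matches the paper's treatment: the paper gives no independent proof of this lemma but simply quotes Saltman (\cite{Sa4} for (i), \cite{Sa2} for (ii), \cite{Sa5} for (iii) and (iv)), and you defer exactly the same substantive parts to the same sources while correctly supplying the purely formal pieces (the forward half of (i) by adjoining fresh variables to the retraction, and the converse of (i) via the elementary fact that $L$, being infinite, is a dense retraction of $L(x_1,\ldots,x_n)$, combined with (iii)). Nothing further is needed.
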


\begin{theorem}[{\cite[Theorem 3.1 and Theorem 3.5]{Sa2}}] \label{t3.5}
Let $G=N\rtimes G_0$ where $N$ is a normal subgroup of $G$ with
$G_0$ acting on $N$.

{\rm (1)} If $k(G)$ is retract $k$-rational, so is $k(G_0)$.

{\rm (2)} Assume furthermore that $N$ is abelian and $\gcd\{|N|,|G_0|\}=1$.
If both $k(N)$ and $k(G_0)$ are retract $k$-rational, so is $k(G)$.
\end{theorem}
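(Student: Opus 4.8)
The plan is to pass to the language of generic Galois extensions via Theorem \ref{t1.2} and then argue functorially. By Theorem \ref{t1.2}, for a finite group $\Gamma$ the field $k(\Gamma)$ is retract $k$-rational if and only if there is a generic $\Gamma$-Galois extension over $k$, i.e.\ a localized polynomial ring $R$ and a $\Gamma$-Galois covering $S$ of $R$ such that every $\Gamma$-Galois covering of every field $k'\supseteq k$ is a specialization $S\otimes_\phi k'$, where $\phi:R\to k'$ can be chosen so as to avoid any prescribed $r\in R\setminus\{0\}$ (recall that a $\Gamma$-Galois covering of a field is the same as a $\Gamma$-torsor, so non-connected ones are admitted). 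Thus (1) amounts to producing such data for $G_0$ and (2) to producing it for $G$.

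For (1) I would fix a generic $G$-Galois extension $(R,S)$ over $k$ and set $T=S^N$, which is a $G/N$-Galois covering of $R=S^G=T^{G_0}$; identify $G/N$ with $G_0$ through the splitting $G_0\hookrightarrow G$. To prove $(R,T)$ generic, use the induction functor $\fn{Ind}_{G_0}^G$ from $G_0$-torsors to $G$-torsors afforded by this inclusion: for a $G_0$-Galois covering $A$ of a field $k'$, one has $\fn{Ind}_{G_0}^G(A)^N\cong A$ as $G_0$-coverings of $k'$, because on choosing coset representatives inside $N$ the algebra $\fn{Ind}_{G_0}^G(A)$ is $|N|$ copies of $A$ permuted simply transitively by $N$, whose ring of $N$-invariants is the diagonal copy with its residual $G_0$-action (this is the statement that restriction along a complement followed by invariants inverts induction). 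Given $A$ and $r\in R\setminus\{0\}$, genericity of $(R,S)$ yields $\phi:R\to k'$ with $\phi(r)\ne 0$ and $\fn{Ind}_{G_0}^G(A)\cong S\otimes_\phi k'$; taking $N$-invariants, which for Galois coverings commutes with base change, gives $A\cong T\otimes_\phi k'$. Hence $(R,T)$ is a generic $G_0$-Galois extension and $k(G_0)$ is retract $k$-rational.

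For (2) the crucial point is that, when $N$ is abelian and $\gcd\{|N|,|G_0|\}=1$, a $G$-Galois covering $E$ of a field $F\supseteq k$ is equivalent data to its $G_0$-Galois quotient $E_0=E^N$ together with the $N$-Galois covering $E/E_0$ carrying its compatible $G_0$-action, and conversely such a pair of data glues back to a $G$-covering of $F$: the obstruction to realising the extension $1\to N\to G\to G_0\to 1$ at the level of coverings, and to lifting the $N$-part $G_0$-equivariantly, lives in cohomology groups that are simultaneously annihilated by $|N|$ and by $|G_0|$, hence vanish by the coprimality hypothesis (a Schur--Zassenhaus argument). Concretely, starting from generic $N$- and $G_0$-Galois extensions $(R_1,S_1)$ and $(R_2,S_2)$, I would take $R=R_1\otimes_k R_2$, which is again a localized polynomial ring, form the $G_0$-covering $\bar S_2=S_2\otimes_{R_2}R$ of $R$, and then build over $\bar S_2$ the $N$-covering extracted from $S_1$ but twisted by the conjugation action of $G_0$ on $N$; assembling these produces a $G$-Galois covering $S$ of $R$. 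Genericity of $(R,S)$ is then checked by running the above decomposition on an arbitrary $G$-covering $E/F$: specialize its $G_0$-part $E^N$ through $(R_2,S_2)$ and its twisted $N$-part $E/E^N$ through $(R_1,S_1)$, the two $k$-morphisms being chosen compatibly so as to avoid any prescribed nonzero element of $R$, and recombine. By Theorem \ref{t1.2}, $k(G)$ is retract $k$-rational. (Both (1) and (2) are, of course, precisely \cite[Theorem 3.1 and Theorem 3.5]{Sa2} read through Theorem \ref{t1.2}.)

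The main obstacle is part (2). One must carry out the ``gluing'' of the $N$-part and the $G_0$-part over the ring $R_1\otimes_k R_2$ rather than merely fibrewise, i.e.\ construct the $G_0$-twisted $N$-covering of the $G_0$-covering of $R$ as a genuine $G$-Galois covering of $R$, and one must verify that the coprimality really does trivialize the reconstruction obstruction over rings and not just over fields, so that the resulting $S$ specializes onto \emph{every} $G$-Galois covering of \emph{every} $k'\supseteq k$ while still dodging an arbitrary bad locus. Part (1), by contrast, is essentially formal once the functor $\fn{Ind}_{G_0}^G$ and the identity $\fn{Ind}_{G_0}^G(-)^N\cong(-)$ are in place; the splitting $G=N\rtimes G_0$ is used there precisely to have this functor available.
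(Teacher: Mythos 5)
The paper itself offers no proof of this statement: it is quoted from Saltman (\cite[Theorems 3.1 and 3.5]{Sa2}) and merely translated into the language of retract rationality via Theorem \ref{t1.2}, so your proposal must be measured against Saltman's own arguments. Your part (1) is essentially that argument and is sound: set $T=S^N$, observe that $T/R$ is Galois with group $G/N\simeq G_0$, and use the induced Galois algebra $\fn{Ind}_{G_0}^G(A)$ together with the two standard facts (worth an explicit citation) that $(\fn{Ind}_{G_0}^G A)^N\simeq A$ as $G_0$-algebras and that taking $N$-invariants commutes with specialization $S\mapsto S\otimes_\phi k'$ for Galois extensions of commutative rings.

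Part (2), however, has a genuine gap, which you yourself flag. The step ``build over $\bar S_2$ the $N$-covering extracted from $S_1$ but twisted by the conjugation action of $G_0$ on $N$'' is not a defined operation: a generic $N$-Galois extension $(R_1,S_1)$ carries no $G_0$-structure at all, and producing a $G_0$-equivariant generic extension for $N$ is precisely the hard content of Saltman's Theorem 3.5 --- he relies on the explicit construction of generic extensions for abelian groups and uses $\gcd\{|N|,|G_0|\}=1$ to decompose $N$ as a $\bm{Z}[G_0]$-module and make that construction equivariant, not merely to invoke a Schur--Zassenhaus vanishing. Moreover, your decomposition/reconstruction claim is argued only for coverings of fields; genericity requires the glued object to be a $G$-Galois covering of the ring $R_1\otimes_k R_2$ and to specialize onto \emph{every} $G$-Galois covering of every $k'\supseteq k$ while avoiding a prescribed nonzero element, and the assertion that the relevant obstructions vanish over such rings, compatibly with specialization, is exactly what remains unproved. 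As written, part (2) is a plan rather than a proof; either carry out Saltman's construction in detail or cite \cite[Theorem 3.5]{Sa2}, as the paper does.
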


\begin{remark}
For more results about sufficient conditions to ensure that
$k(N\rtimes G_0)$ is retract $k$-rational, see \cite[Theorems
1.11, 1.12, 1.13 and Theorem 4.3]{Ka2}.
\end{remark}

We recall a reduction theorem for Noether's problem.

\begin{theorem}[{\cite[Theorem 1.1]{KP}}] \label{t3.6}
Let $k$ be a field with $\fn{char}k=p>0$ and $\widetilde{G}$ be a
group extension defined by $1\to C_p\to\widetilde{G}\to G\to 1$.
Then $k(\widetilde{G})$ is rational over $k(G)$.
\end{theorem}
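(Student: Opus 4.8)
The plan is to work inside the regular representation of $\widetilde{G}$ and to exploit that, in characteristic $p$, the radical filtration of $k[C_p]$ is as simple as possible. Write $N=C_p=\ker(\widetilde{G}\to G)$ and $k(\widetilde{G})=k(V)^{\widetilde{G}}$ with $V=\bigoplus_{\tilde g\in\widetilde{G}}k\,x_{\tilde g}$. Fix a transversal $T$ of $N$ in $\widetilde{G}$, so that $T$ is, as a $G$-set, the regular $G$-set; the elements $y_t=\sum_{n\in N}x_{nt}$ are permuted by $\widetilde{G}$ through the regular action of $G$, they span a $\widetilde{G}$-submodule $V_0\cong k[G]$ (inflated along $\widetilde{G}\to G$), and $k(V_0)^{\widetilde{G}}=k(y_t:t\in T)^G\cong k(G)$. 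Since $|N|=p=\fn{char}k$, the module $k[N]$ is uniserial: $k[N]=J^0\supsetneq J^1\supsetneq\cdots\supsetneq J^p=0$ with $J^i/J^{i+1}\cong k$, and $J^i=(\sigma-1)^ik[N]$ does not depend on the generator $\sigma$ of $N$ (a conjugation sends $\sigma$ to $\sigma^j$ with $p\nmid j$, and $\sigma^j-1$ differs from $\sigma-1$ by a unit of $k[N]$). As $V|_N\cong k[N]^{\oplus|G|}$, this produces a $\widetilde{G}$-stable filtration $V=W_0\supsetneq W_1\supsetneq\cdots\supsetneq W_{p-1}=V_0\supsetneq W_p=0$ with $W_i/W_{i+1}\cong k[G]$ for every $i$.

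I would then prove by descending induction on $i$ that $k(W_i)^{\widetilde{G}}$ is rational over $k(G)$; the case $i=p-1$ is the isomorphism above, and $i=0$ (i.e.\ $W_0=V$) is the theorem. For the passage from $W_{i+1}$ to $W_i$ one has $k(W_i)=F(v_t:t\in T)$ with $F=k(W_{i+1})$, where the $v_t$ lift a permutation $k$-basis of $W_i/W_{i+1}$ and $\sigma(v_t)=v_{\sigma\cdot t}+c_t(\sigma)$ with $c_t(\sigma)\in F$ and $\sigma\cdot$ the regular $G$-action on $T$. When $i+1\le p-2$, the lattice $J^{i+1}$ is a Jordan block of size $\ge2$, from which one checks that both $N$ and $\widetilde{G}$ act faithfully on $F$ (on each summand of $W_{i+1}$ only $1\in N$ acts trivially, and an element outside $N$ moves the summands). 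Then $H^1(N,\mathrm{Res}_N F)=0$ by the additive form of Hilbert's Theorem 90, hence $H^1\big(\widetilde{G},\bigoplus_tF\,v_t\big)=H^1\big(\widetilde{G},\mathrm{Ind}_N^{\widetilde{G}}\mathrm{Res}_N F\big)=0$ by Shapiro's lemma, so after a substitution $v_t\mapsto v_t-\lambda_t$ ($\lambda_t\in F$) the action of $\widetilde{G}$ on $\bigoplus_tF\,v_t$ becomes purely $F$-semilinear and permutational; by Speiser's theorem (faithful flat descent, applicable because $F/F^{\widetilde{G}}$ is now Galois) there is an $F$-basis $\{w_t\}$ of this space fixed by $\widetilde{G}$, so $k(W_i)=F(w_t:t)$ with $\widetilde{G}$ acting only on $F$, and $k(W_i)^{\widetilde{G}}=k(W_{i+1})^{\widetilde{G}}(w_t:t\in T)$ is rational over $k(W_{i+1})^{\widetilde{G}}$.

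The remaining step, from $W_{p-1}=V_0$ to $W_{p-2}$, is the main obstacle, because there $\widetilde{G}$ acts on $F=k(y_t:t)$ only through $G$, so Hilbert 90 and Speiser no longer apply as stated. Here $J^{p-2}$ is a size-$2$ Jordan block, so after rescaling $\tau(v_t)=v_t+y_t$ for a generator $\tau$ of $N$ and $\tau(y_t)=y_t$; I would pass to $\xi_t=v_t/y_t$, so that $\tau(\xi_t)=\xi_t+1$ and, for general $\tilde h$, $\tilde h(\xi_t)=\xi_{\tilde h\cdot t}+(\text{an element of }F)$. The differences $\xi_t-\xi_{t_0}$ are $N$-fixed, and $F':=k(y_t:t)(\xi_t-\xi_{t_0}:t\in T)$ is $\widetilde{G}$-stable with $N$ acting trivially and $G$ acting affinely and \emph{faithfully} (it is already faithful on $k(y_t:t)$), so $(F')^{G}$ is rational over $k(y_t:t)^G=k(G)$ by the no-name lemma. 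Finally $k(W_{p-2})=F'(\xi_{t_0})$ with $\tau(\xi_{t_0})=\xi_{t_0}+1$, whence its $N$-invariant subfield is $F'(\xi_{t_0}^{\,p}-\xi_{t_0})$, on which $G$ acts affine-linearly over $F'$ — applying $\sigma$ adds an element $b$ of $F'$ to $\xi_{t_0}$, and $b^p-b\in F'$ since $\fn{char}k=p$, so no $\xi_{t_0}^p$-term is left over — and a last application of the no-name lemma shows $(F'(\xi_{t_0}^{\,p}-\xi_{t_0}))^G=k(W_{p-2})^{\widetilde{G}}$ is rational over $(F')^G$, hence over $k(G)$.

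The subtle point in this last step — and the reason I peel the $\xi_t-\xi_{t_0}$ into $F'$ first and keep a single Artin--Schreier generator $\xi_{t_0}^{\,p}-\xi_{t_0}$ to the end — is that the $G$-action on the $N$-invariants must be made genuinely affine-\emph{linear} over the base, not merely additive-polynomial. I expect the rest to be bookkeeping: checking that the radical filtration is intrinsic (so the $W_i$ are $\widetilde{G}$-submodules), the faithfulness assertions above, and the transcendence-degree count, in which each of the $p-1$ climbing steps contributes $|G|$ new generators, for a total of $(p-1)|G|=|\widetilde{G}|-|G|$.
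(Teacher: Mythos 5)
The paper does not actually prove Theorem~\ref{t3.6}: it is quoted from \cite[Theorem 1.1]{KP}, so there is no internal proof to compare against. Your argument is a correct, self-contained proof, and it is in the same spirit as the standard proof of this reduction theorem: filter the regular representation $V$ of $\widetilde{G}$ by the $\widetilde{G}$-stable subspaces $W_i=J^iV$, where $J$ is the augmentation ideal of $k[C_p]$ (stability because $\sigma^j-1=(\sigma-1)u$ with $u$ a unit of the local ring $k[C_p]$); climb the filtration one layer at a time using the linearization theorem for affine actions over a field on which the group acts faithfully --- this is exactly the paper's Theorem~\ref{t5.1}, which you could simply cite instead of re-deriving it from additive Hilbert 90, Shapiro and Speiser; and treat the one layer where $C_p$ acts trivially on the base, $W_{p-1}=V_0$, by the substitution $\xi_t=v_t/y_t$ followed by an Artin--Schreier generator $\xi_{t_0}^p-\xi_{t_0}$. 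Your isolation of that last step (splitting the $N$-fixed differences $\xi_t-\xi_{t_0}$ into $F'$ before taking $N$-invariants, so that the residual $G$-action stays affine over $F'$) is the genuinely delicate point, and it is handled correctly; the faithfulness claims (via the Jordan blocks of size $\ge 2$, or more simply via $V_0\subset W_{i+1}$) and the transcendence-degree count also check out.

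One inaccuracy should be repaired, although it does not damage the proof: unless $C_p$ is central in $\widetilde{G}$, the layers $W_i/W_{i+1}$ are not isomorphic to $k[G]$ but to $k[G]$ twisted by the $i$-th power of the character $\chi:\widetilde{G}\to \mathrm{Aut}(C_p)\simeq \mathbb{F}_p^{\times}$ given by conjugation; indeed $\tilde{h}\cdot \overline{(\sigma-1)^i x_t}=\chi(\tilde{h})^i\,\overline{(\sigma-1)^i x_{t'}}$. Consequently, in your formulas the linear part is a scalar in $\mathbb{F}_p^{\times}$ times a permutation, e.g. $\tilde{h}(\xi_t)=\chi(\tilde{h})^{-1}\xi_{\tilde{h}\cdot t}+c$ rather than $\xi_{\tilde{h}\cdot t}+c$. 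This is harmless: Theorem~\ref{t5.1} (and likewise Speiser/Hilbert 90) allows an arbitrary linear part $A(\sigma)\in GL_m(L)$; the field $F'$ is still $\widetilde{G}$-stable, since the differences are merely rescaled and re-permuted; and in the Artin--Schreier step one has $(a\xi_{t_0}+b)^p-(a\xi_{t_0}+b)=a(\xi_{t_0}^p-\xi_{t_0})+(b^p-b)$ because $a^p=a$ for $a\in\mathbb{F}_p$, so the induced action on $\xi_{t_0}^p-\xi_{t_0}$ remains affine over $F'$. With that adjustment your proof is complete.
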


\begin{theorem}[Saltman {\cite[Theorem 4.12]{Sa4}}] \label{t3.7}
Let $k$ be an infinite field and $G$ be a finite abelian group of
exponent $e=2^rm$ with $2\nmid m$. Then $k(G)$ is retract
$k$-rational if and only if either $\fn{char}k=2$, or
$k(\zeta_{2^r})$ is a cyclic extension over $k$.
\end{theorem}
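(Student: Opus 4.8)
\noindent The plan is to reduce, via Lemma~\ref{l3.4}(ii), to the case of an abelian $p$-group, and then to treat the two implications separately. The ``only if'' part will follow from Voskresenskii's Theorem~\ref{t2.9}. For the ``if'' part, the prime $p=\fn{char}k$ will be handled by the reduction Theorem~\ref{t3.6}, while every other prime will be handled by diagonalizing the regular representation of the corresponding Sylow subgroup over a cyclotomic extension of $k$ and then invoking the Endo--Miyata criterion (Theorem~\ref{t2.5}) together with Saltman's criterion (Theorem~\ref{t2.8}).

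First I would write $G=\prod_p G_p$ as the product of its Sylow subgroups; then $G_2$ is an abelian $2$-group whose exponent is the $2$-part $2^r$ of $\exp G$, and by repeated use of Lemma~\ref{l3.4}(ii), $k(G)$ is retract $k$-rational if and only if each $k(G_p)$ is. For the ``only if'' direction I would assume $\fn{char}k\neq 2$ and that $k(\zeta_{2^r})$ is not cyclic over $k$ (if $\fn{char}k=2$, the right-hand alternative in the statement holds and there is nothing to prove). Since $\exp G_2=2^r$, the structure theorem for finite abelian groups makes $C_{2^r}$ a direct factor of $G_2$, hence of $G$; write $G\cong C_{2^r}\times G'$. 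If $k(G)$ were retract $k$-rational, then Lemma~\ref{l3.4}(ii) would make $k(C_{2^r})$ retract $k$-rational, contradicting Theorem~\ref{t2.9}.

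For the ``if'' direction it suffices to prove each $k(G_p)$ is retract $k$-rational, and we may assume $G_p\neq 1$. If $p=\fn{char}k$, pick a subgroup $C_p\leq G_p$ and apply Theorem~\ref{t3.6} to the extension $1\to C_p\to G_p\to G_p/C_p\to 1$; induction on $|G_p|$ then gives that $k(G_p)$ is $k$-rational, hence retract $k$-rational. If instead $p\neq\fn{char}k$, set $A=G_p$, $p^s=\exp A$, $K=k(\zeta_{p^s})$, $\pi=\fn{Gal}(K/k)$ and $\widehat A=\fn{Hom}(A,\mu_{p^s})$, and let $A\times\pi$ act on $K(x_a:a\in A)$ with $A$ acting by the regular representation and fixing $K$, and $\pi$ acting through $\fn{Gal}(K/k)$ and fixing every $x_a$. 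Since $|A|$ is invertible in $k$ and $\mu_{p^s}\subset K$, the matrix $(\chi(a))_{\chi\in\widehat A,\,a\in A}$ is invertible over $K$, so the elements $y_\chi:=\sum_{a\in A}\chi(a)x_a$ satisfy $K(x_a:a\in A)=K(y_\chi:\chi\in\widehat A)$, with $b\cdot y_\chi=\chi(b)^{-1}y_\chi$ for $b\in A$ and $\tau\cdot y_\chi=y_{\tau\circ\chi}$ for $\tau\in\pi$. Passing to $A$-invariant Laurent monomials in the $y_\chi$ and then to $\pi$-invariants yields $k(A)=K(M)^\pi$, where $M:=\fn{Ker}\big(\bigoplus_{\chi\in\widehat A}\bm{Z}\cdot e_\chi\to\widehat A\big)$ (the homomorphism sending $e_\chi$ to $\chi$) is a $\pi$-lattice, $\pi$ acting by the permutation $\tau\cdot e_\chi=e_{\tau\circ\chi}$. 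Now $\pi$ embeds into $(\bm{Z}/p^s\bm{Z})^\times$, which is cyclic for odd $p$; for $p=2$ one has $p^s=2^r$, and the assumption that $k(\zeta_{2^r})$ is cyclic over $k$ makes $\pi$ cyclic as well. A cyclic group has cyclic Sylow subgroups, so by Theorem~\ref{t2.5} every flabby $\pi$-lattice is invertible; in particular $[M]^{fl}$ is invertible, and Theorem~\ref{t2.8} shows that $k(A)=K(M)^\pi$ is retract $k$-rational.

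The step I expect to be the main obstacle is the identification $k(A)=K(M)^\pi$ in the case $p\neq\fn{char}k$: one must check carefully that, in the $y_\chi$-coordinates, $A$ acts diagonally through roots of unity --- so that $K(y_\chi:\chi\in\widehat A)^A$ is precisely the fraction field of the monomial subring cut out by $M$ --- while $\pi$ permutes the $y_\chi$ with no scalar factors, so that the induced $\pi$-action on $K(M)$ is the purely monomial action over $K$ to which Theorem~\ref{t2.8} applies. Granting this, the Sylow decomposition, the structure theorem, and the cited criteria complete the argument without further difficulty.
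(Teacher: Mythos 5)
Your proposal is correct, and its overall skeleton (split off the $p=\fn{char}k$ part via Theorem \ref{t3.6}, decompose via Lemma \ref{l3.4}(ii), and get the ``only if'' direction from Theorem \ref{t2.9} applied to the direct factor $C_{2^r}$) coincides with the paper's. Where you genuinely diverge is the ``if'' direction: the paper simply decomposes $G$ into cyclic prime-power factors $C_q$ and quotes Saltman's result \cite[Theorem 2.1]{Sa2} that $k(C_q)$ is retract $k$-rational when $q$ is odd or $q$ is even with $k(\zeta_q)/k$ cyclic, whereas you work Sylow-by-Sylow and reprove this positive case from the paper's own toolkit: diagonalize the regular representation of $A=G_p$ over $K=k(\zeta_{p^s})$, identify $k(A)=K(M)^{\pi}$ with $M=\fn{Ker}(\bigoplus_{\chi}\bm{Z}e_\chi\to\widehat A)$ and $\pi=\fn{Gal}(K/k)$ acting by permutation, note $\pi$ is cyclic (automatic for odd $p$, by hypothesis for $p=2$), and conclude via Endo--Miyata (Theorem \ref{t2.5}) and Saltman's criterion (Theorem \ref{t2.8}). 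The step you flag as delicate is fine: $A$ acts on the $y_\chi$ by characters valued in $\mu_{p^s}\subset K$, so $K[y_\chi^{\pm1}]^A=K[M]$ and taking fraction fields commutes with invariants for a finite group, while $\pi$ permutes the $y_\chi$ without scalars, so the $\pi$-action on $K(M)$ is exactly the setting of Theorem \ref{t2.8}; the trivial character contributes a $\pi$-fixed free summand of $M$, which is harmless. In effect this is the same computation the paper carries out in its proof of Theorem \ref{t2.9}, now run in the positive direction. The trade-off: the paper's proof is shorter because it outsources the cyclic case to \cite{Sa2}; yours is more self-contained within the results stated in the paper and makes visible that cyclicity of $\fn{Gal}(k(\zeta_{2^r})/k)$ is precisely what forces $[M]^{fl}$ to be invertible.
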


\begin{proof}
If $\fn{char}k=p>0$ and $p\mid e$, choose an element $g\in G$ of
order $p$. Consider $1\to \langle g\rangle \to G\to G/\langle
g\rangle \to 1$ and apply Theorem \ref{t3.6}. Since $k(G)$ is
rational over $k(G/\langle g\rangle)$, it follows that $k(G)$ is
retract $k$-rational if and only if so is $k(G/\langle g\rangle)$
by Lemma \ref{l3.4}. Thus we may assume that
$\gcd\{\fn{char}k,|G|\}=1$ without loss of generality.

Write $G \simeq \prod_q C_q$ where these $q$'s are some prime
powers with $\gcd\{\fn{char}k, q \}=1$. By Lemma \ref{l3.4}, it
suffices to check whether each $k(C_q)$ is or is not retract
$k$-rational.

If $\fn{char}k=2$, then $q$ is an odd integer by the above
assumption. Thus $k(C_q)$ is retract $k$-rational by \cite[Theorem
2.1]{Sa2}. From now on, we assume that $\fn{char}k \neq 2$.

By \cite[Theorem 2.1]{Sa2}, $k(C_q)$ is retract $k$-rational if
$q$ is odd or $q$ is even with $k(\zeta_q)$ being cyclic over $k$.
When $k(\zeta_q)$ is not cyclic over $k$, then $k(C_q)$ is not
retract $k$-rational by Theorem \ref{t2.9}.
\end{proof}

\begin{remark}
Voskresenskii shows that, if $G=C_{2^r}$, then $k(G)$ is
$k$-rational $\Leftrightarrow k(G)$ is retract $k$-rational
$\Leftrightarrow$ either $\fn{char}k=2$ or $k(\zeta_{2^r})$ is
cyclic over $k$ \cite[p.79]{Vo2}. For any odd prime number $p$,
$\bm{Q}(C_p)$ is always retract $\bm{Q}$-rational by the above
theorem, while $\bm{Q}(C_{47})$ is not $\bm{Q}$-rational by Swan
(see, for example, \cite[p. 299]{Le}), and thus not stably
$\bm{Q}$-rational by \cite[Proposition 5.6]{Le}.
\end{remark}

Here is another criterion for retract rationality.
\begin{example}[{\cite[p.2763]{Ka2}}] \label{ex3.8}
Let $k$ be any infinite field, $G$ be a non-abelian $p$-group of
exponent $p$ and of order $p^3$ or $p^4$. Then $k(G)$ is retract
$k$-rational.
\end{example}

%--------------------------------------------S4
\section{A transitivity theorem}

Before proving the transitivity theorem, we recall a lemma due to
Swan.

\begin{lemma}[{\cite[Lemma 4.3]{Sw1}}] \label{l4.1}
Let $L$ be a $k$-field,
$R_1$ and $R_2$ be affine $k$-domains contained in $L$ such that the quotient fields of $R_1$ and $R_2$
are equal to $L$.
Then there are $r_1\in R_1\backslash\{0\}$, $r_2\in R_2\backslash\{0\}$ such that
$R_1[1/r_1]=R_2[1/r_2]$.
\end{lemma}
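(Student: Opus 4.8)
The plan is to reduce everything to a common integral domain and then clear denominators symmetrically. Write $L$ as the quotient field of both $R_1 = k[a_1,\ldots,a_s]$ and $R_2 = k[b_1,\ldots,b_t]$. The key observation is that each generator $b_j$ of $R_2$ lies in $L = \fn{Frac}(R_1)$, so we may write $b_j = p_j/q_j$ with $p_j, q_j \in R_1$ and $q_j \neq 0$. Set $r_1 = \prod_{1 \le j \le t} q_j \in R_1 \setminus \{0\}$. Then every $b_j$, and hence all of $R_2$, lies in the localization $R_1[1/r_1]$; that is, $R_2 \subseteq R_1[1/r_1]$. By the symmetric argument, writing each $a_i \in L = \fn{Frac}(R_2)$ as a fraction over $R_2$ and taking $r_2 \in R_2 \setminus \{0\}$ to be the product of the denominators, we get $R_1 \subseteq R_2[1/r_2]$.

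Next I would upgrade these inclusions to an equality of localizations. From $R_2 \subseteq R_1[1/r_1]$ we get $R_2[1/r_1 r_2] \subseteq R_1[1/r_1 r_2]$ once we know $r_2$ is invertible on the right side — but $r_2 \in R_2 \subseteq R_1[1/r_1]$, so $r_2 \in R_1[1/r_1]$, and we can write $r_2 = u/r_1^N$ for some $u \in R_1$; then inverting $r_2$ over $R_1[1/r_1]$ amounts to inverting $u$, so $R_1[1/r_1][1/r_2] = R_1[1/(r_1 u)]$, still a localization of $R_1$ at a single nonzero element. Symmetrically $R_2[1/r_2][1/r_1] = R_2[1/(r_2 v)]$ for a suitable $v \in R_2$. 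The inclusions $R_1 \subseteq R_2[1/r_2]$ and $R_2 \subseteq R_1[1/r_1]$ then give $R_1[1/(r_1 u)] = R_2[1/(r_2 v)]$: indeed each side contains both $R_1$ and $R_2$ and inverts all the relevant elements, and any element of one side is a fraction with numerator in $R_1$ (or $R_2$) and denominator a power of the inverted element, which visibly lies in the other. Renaming $r_1 u$ as the new $r_1$ and $r_2 v$ as the new $r_2$ (both nonzero, and lying in $R_1$, $R_2$ respectively after absorbing the appropriate powers) gives the claim.

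I expect the only subtle point to be bookkeeping: making sure that after introducing the auxiliary elements $u \in R_1$ and $v \in R_2$, the final equal localized rings are genuinely of the form $R_1[1/r_1']$ and $R_2[1/r_2']$ for single elements $r_1' \in R_1 \setminus\{0\}$, $r_2' \in R_2\setminus\{0\}$, rather than localizations at finitely many elements. This is handled by the standard fact that inverting finitely many nonzero elements of a domain is the same as inverting their product, which keeps us inside the class of localized rings at one element throughout. No deeper input is needed; the argument is elementary commutative algebra, and the whole point is just to exhibit the symmetric common denominator.
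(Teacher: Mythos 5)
Your proposal is correct, and it is essentially the standard argument behind the cited result: the paper itself gives no proof of Lemma \ref{l4.1} (it simply quotes Swan's Lemma 4.3), and your proof follows the same route Swan does. Namely, write the generators of each affine domain as fractions over the other to obtain $R_2\subseteq R_1[1/r_1]$ and $R_1\subseteq R_2[1/r_2]$, then observe that $r_2=u/r_1^N$ with $u\in R_1$ (and symmetrically $r_1=v/r_2^M$ with $v\in R_2$) so that the common double localization $R_1[1/r_1][1/r_2]=R_2[1/r_2][1/r_1]$ is simultaneously $R_1[1/(r_1u)]$ and $R_2[1/(r_2v)]$, a localization of each ring at a single nonzero element.
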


\begin{theorem} \label{t4.2}
Let $K\subset L$ be $k$-fields.
If $K$ is retract $k$-rational and $L$ is retract $K$-rational,
then $L$ is retract $k$-rational.
\end{theorem}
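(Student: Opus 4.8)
The plan is to unwind the two hypotheses into explicit retraction diagrams and then compose them, using Lemma \ref{l4.1} to reconcile the two localized polynomial rings that appear. Since $K$ is retract $k$-rational, Definition \ref{d1.1} gives an affine $k$-domain $A$ with quotient field $K$ together with $k$-algebra maps $\varphi_1 : A \to k[X_1,\ldots,X_n][1/f]$ and $\psi_1 : k[X_1,\ldots,X_n][1/f] \to A$ satisfying $\psi_1 \circ \varphi_1 = 1_A$. Since $L$ is retract $K$-rational, we get an affine $K$-domain $B$ with quotient field $L$ and $K$-algebra maps $\varphi_2 : B \to K[Y_1,\ldots,Y_m][1/g]$, $\psi_2 : K[Y_1,\ldots,Y_m][1/g] \to B$ with $\psi_2 \circ \varphi_2 = 1_B$. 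The first thing I would do is replace $A$ by a convenient localization: since $A$ and (the image in $K$ of) $k[X_1,\ldots,X_n][1/f]$ — call it $A'$ — are both affine $k$-domains with quotient field $K$, Lemma \ref{l4.1} lets me invert finitely many elements so that $A[1/a] = A'[1/a']$ inside $K$; this does not change the quotient field, and localizing $A$ is harmless for retract rationality (a retraction of $A$ restricts to one of $A[1/a]$). So without loss of generality I may assume $A$ itself is a localized polynomial ring over $k$, say $A = k[X][1/f]$, with $\psi_1\varphi_1 = 1_A$ coming for free.

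Next I would descend the data defining $B$ from $K$ to the $k$-algebra $A$. The coefficients $c_j(\sigma)$ — more precisely, the finitely many elements of $K$ needed to define $B$ as a $K$-algebra, the maps $\varphi_2,\psi_2$, and the finitely many elements of $K$ needed to define $K[Y][1/g]$ — all lie in $K = \operatorname{Frac}(A)$, hence in $A[1/h]$ for a single $0 \neq h \in A$. Replacing $A$ by $A[1/h]$ (again harmless), I obtain: an affine $A$-domain $B_0$ with $B_0 \otimes_A K = B$, a localized polynomial ring $A[Y_1,\ldots,Y_m][1/\tilde g]$ over $A$ (with $\tilde g$ a lift of $g$), and $A$-algebra maps $\varphi_2^0 : B_0 \to A[Y][1/\tilde g]$, $\psi_2^0 : A[Y][1/\tilde g] \to B_0$ with $\psi_2^0 \varphi_2^0 = 1_{B_0}$ — this holds because the identity $\psi_2\varphi_2 = 1_B$ is a finite set of polynomial identities in finitely many elements of $K$, so it already holds over a suitable $A[1/h]$. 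Now set $C := B_0$ as an affine $k$-domain; its quotient field is $\operatorname{Frac}(B_0) = \operatorname{Frac}(B) = L$ (this needs a small check: $B_0 \subset B$ and $B = B_0 \otimes_A K$ localizes $B_0$ by the nonzero elements of $A$, so the fraction fields agree). Moreover $A[Y][1/\tilde g]$, being a localized polynomial ring over the localized polynomial ring $A = k[X][1/f]$, is itself a localized polynomial ring over $k$ (inverting $f$ and $\tilde g$ inside $k[X,Y]$).

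To finish, compose the two retractions: define $\Phi : C = B_0 \to A[Y][1/\tilde g] \hookrightarrow$ (localized polynomial ring over $k$) via $\varphi_2^0$, and define $\Psi$ from that localized polynomial ring back to $C$ by first applying $\psi_2^0 : A[Y][1/\tilde g] \to B_0 = C$ — but $\psi_2^0$ is only an $A$-map, so I must also know how $\psi_1$ interacts with the inclusion $A = k[X][1/f]$. Here the reduction in the first paragraph pays off: because I arranged $A$ to already be a localized polynomial ring, $\psi_2^0$ itself is a $k$-algebra map from a localized polynomial ring over $k$ to $C$, and $\varphi_2^0$ is a $k$-algebra map the other way, with $\psi_2^0 \circ \varphi_2^0 = 1_C$. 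Together with $\operatorname{Frac}(C) = L$ this is exactly Definition \ref{d1.1} for $L$ being retract $k$-rational. I expect the main obstacle to be the bookkeeping in the descent step — verifying that all the $K$-data genuinely lives over a single finitely generated $k$-subalgebra of $K$, that localizing preserves the splitting identity and does not change the relevant fraction fields, and that "localized polynomial ring over a localized polynomial ring" is again a localized polynomial ring; none of these is deep, but getting the quantifiers right (finitely many elements, one common denominator) is where care is required. The role of Lemma \ref{l4.1} is precisely to let us assume at the outset that $A$ is literally of the form $k[X][1/f]$, which is what makes the composed map a bona fide $k$-algebra retraction rather than merely an $A$-algebra one.
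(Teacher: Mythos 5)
There is a genuine gap, and it occurs at the very first reduction, on which the rest of your argument depends. You claim that, using Lemma \ref{l4.1}, you may assume ``without loss of generality'' that the affine $k$-domain $A$ with quotient field $K$ is itself a localized polynomial ring $k[X_1,\ldots,X_n][1/f]$. This cannot be right: if $A$ were a localized polynomial ring, its quotient field $K$ would be $k$-rational, whereas the hypothesis is only that $K$ is retract $k$-rational --- and the whole point of the notion (e.g. $\bm{Q}(C_{47})$, which is retract $\bm{Q}$-rational but not stably $\bm{Q}$-rational) is that this implication fails. Concretely, the object you call $A'$, ``the image in $K$ of $k[X_1,\ldots,X_n][1/f]$'', is the image of $\psi_1$, hence is contained in $A$; since $\psi_1\circ\varphi_1=1_A$ it actually equals $A$. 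It is a homomorphic image of a localized polynomial ring, not a localized polynomial ring, so Lemma \ref{l4.1} gives you nothing here. Your final paragraph explicitly invokes this false reduction (``because I arranged $A$ to already be a localized polynomial ring, $\psi_2^0$ itself is a $k$-algebra map from a localized polynomial ring over $k$''), so the composition you write down only exhibits $L$ as retract rational over $A$ (equivalently over $K$), which is the hypothesis, not the conclusion.

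The descent step in your second paragraph (spreading out $B$, $\varphi_2$, $\psi_2$ and the identity $\psi_2\varphi_2=1_B$ over a suitable localization of an affine model $R$ of $K$) is sound and is essentially Step 1 of the paper's proof; there Lemma \ref{l4.1} is used for its correct purpose, namely to replace $R$ by a common localization with the affine model $S$ of $K$ coming from retract $k$-rationality, so that $\varphi_1,\psi_1$ become defined on $R$. What is missing in your argument is the paper's Step 2: one must splice the two retractions by extending $\varphi_1,\psi_1$ to maps $\varphi_2\colon R[X_1,\ldots,X_n][1/f]\to k[X_1,\ldots,X_n,Y_1,\ldots,Y_m][1/(g_0h)]$ and back, where the subtle point is that the inverted element $f$ has coefficients in $R$ (not in $k$), so one writes $f=\sum_\lambda a_\lambda X^\lambda$, sets $h=\sum_\lambda b_\lambda X^\lambda$ with $\varphi_1(a_\lambda)=b_\lambda/g_0^N$, inverts $g_0h$ on the $k$-side, and checks that $\psi_2(h)$ is a unit so that the retraction of $C_0=R[X][1/f]$ through a genuine localized polynomial ring over $k$ exists; composing with $\varphi,\psi$ then gives the retraction of $A$ over $k$. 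Your proposal has no substitute for this step, and without it the theorem does not follow.
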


\begin{proof}
Geometrically this result looks clear. Here is a rigorous proof.

Step 1. By assumptions, there exist an affine $K$-domain $B$, an
affine $k$-domain $S$, localized polynomial ring
$K[X_1,\ldots,X_n][1/f]$, $k[Y_1,\ldots,Y_m][1/g]$, and
$K$-algebra morphisms $\varphi:B\to K[X_1,\ldots,X_n][1/f]$,
$\psi:K[X_1,\ldots,X_n][1/f]\to B$, $k$-algebra morphisms
$\varphi_1:S\to k[Y_1,\ldots, Y_m][1/g]$,
$\psi_1:k[Y_1,\ldots,Y_m][1/g]\to S$ satisfying that

(i) the quotient fields of $B$ and $S$ are $L$ and $K$ respectively, and

(ii) $\psi\circ\varphi =1_B$, $\psi_1\circ \varphi_1=1_S$.

We will find a subring $A$ of $L$ and an affine $k$-domain $R$ of $K$ such that

(i) $A=R[\alpha_1,\ldots,\alpha_t]$ for some $\alpha_1,\ldots,\alpha_t\in L$,

(ii) the quotient fields of $A$ and $R$ are $L$ and $K$ respectively, and

(iii) the above morphisms $\varphi$, $\psi$, $\varphi_1$, $\psi_1$ are ``well-defined" for $A$ and $R$,
i.e.\ the ``natural extensions" of these morphisms (still denoted by $\varphi$, $\psi$, $\varphi_1$, $\psi_1$,
by abusing the notations) $\varphi:A\to R[X_1,\ldots,X_n][1/f]$,
$\psi:R[X_1,\ldots,X_n][1/f]\to A$, $\varphi_1:R\to k[Y_1,\ldots,Y_m][1/g_0]$,
$\psi_1:k[Y_1,\ldots,Y_m][1/g_0]\to R$ are well-defined (where $g_0=gg_1$ for some non-zero polynomial $g_1$)
and satisfy $\psi\circ\varphi =1_A$, $\psi_1\circ\varphi_1=1_R$.

The above assertion seems obvious in some sense, although a formal
proof is tedious. We provide the proof in the following.

Note that in choosing the localized polynomials $K[X_1,\ldots,X_n][1/f]$ and $k[Y_1,\ldots,\break Y_m][1/g]$,
we may assume that $X_1,\ldots,X_n,Y_1,\ldots,Y_m$ are algebraically independent over $K$.
In fact, these subrings may be chosen from the rational function field $K(X_1,\ldots,X_n,\break Y_1,\ldots,Y_m)$.

Write $B=K[\alpha_1,\alpha_2,\ldots,\alpha_t]$ for some $\alpha_1,\ldots,\alpha_t\in L$.
Let $R_1$ be an affine $k$-domain whose quotient field is $K$.
Thus the quotient field of $R_1[\alpha_1,\ldots,\alpha_t]$ is $L$.

We will enlarge $R_1$ by adjoining additional elements of $K$ to $R_1$.
First $f\in K[X_1,\ldots,X_n]$.
Adjoin all the coefficients of $f$ into $R_1$.
Then consider $\varphi(\alpha_j)$ for $1\le j\le t$.
Since $\varphi(\alpha_j)=f_j/f^l$ for some $f_j\in K[X_1,\ldots,X_n]$.
Adjoin all the coefficients of all these $f_j$ to $R_1$ also.
Call this new affine $k$-domain $R_2$.
It follows that $f\in R_2[X_1,\ldots,X_n]$ and $\varphi:R_2[\alpha_1,\ldots,\alpha_t]\to R_2[X_1,\ldots,X_n][1/f]$
is well-defined.

Now consider $\psi(X_1),\ldots,\psi(X_n)$ and $\psi(1/f)$.
They lie in $B=K[\alpha_1,\ldots,\alpha_t]$.
Thus they belong to the subring $R_2[\alpha_1,\ldots,\alpha_t][1/\beta]$ for a fixed element $\beta\in K\backslash\{0\}$.
Adjoin $1/\beta$ to $R_2$.
Call this affine $k$-domain $R_3$.
We conclude that the $R_3$-algebra morphisms $\varphi:R_3[\alpha_1,\ldots,\alpha_t]\to R_3[X_1,\ldots,X_n][1/f]$,
$\psi:R_3[X_1,\ldots,X_n][1/f]\to R_3[\alpha_1,\ldots,\alpha_t]$ are well-defined and satisfy $\psi\circ\varphi=1$.

Consider the affine $k$-domain $S$.
Apply Lemma \ref{l4.1}.
We find $r\in R_3\backslash\{0\}$ and $r_1\in S\backslash\{0\}$ so that $R_3[1/r]=S[1/r_1]$.
Define $R=R_3[1/r]$ and $A=R[\alpha_1,\ldots,\alpha_t]$.

Note that $\varphi_1(r_1)=g_1/g^{l'}$ for some non-zero polynomial
$g_1\in k[Y_1,\ldots,Y_m]$. Define $g_0=g\cdot g_1$. Then
$\varphi_1:S[1/r_1]\to k[Y_1,\ldots,Y_m][1/g_0]$ is well-defined.
It is not difficult to check that, in the morphism
$\psi_1:k[Y_1,\ldots,Y_m][1/g]\to S$, the element $\psi_1(g)$ is a
unit in $S$. Thus $\psi_2(g_1)=r_1u$ for some unit $u\in S$. It
follows that $\psi_1: k[Y_1,\ldots,Y_m][1/g_0]\to S[1/r_1]$ is
also well-defined. Thus, the $k$-algebra morphisms $\varphi_1:R\to
k[Y_1,\ldots,Y_m][1/g_0]$ and $\psi_2:k[Y_1,\ldots,Y_m][1/g_0]\to
R$ satisfying $\psi_1\circ \varphi_1 =1_R$. So are the $R$-algebra
morphisms $\varphi:A\to R[X_1,\ldots,X_n][1/f]$ and
$\psi:R[X_1,\ldots,X_n][1/f]\to A$. Done.

\bigskip
Step 2.
Let $C_0:=R[X_1,\ldots,X_n][1/f]$.
Then we have $R$-algebra morphisms $\varphi:A\to C_0$ and $\psi:C_0\to A$ with $\psi\circ\varphi=1_A$.
Note that $A=R[\alpha_1,\ldots,\alpha_t]$ is an affine $k$-domain whose quotient field is $L$.
We will define a localized polynomial $C$ related to $A$ and $C_0$.

Since $f\in R[X_1,\ldots,X_n]$,
write $f=\sum_\lambda a_\lambda\cdot X^\lambda$ where $X^\lambda=X_1^{\lambda_1}X_2^{\lambda_2}\cdots X_n^{\lambda_n}$
and $a_\lambda\in R$.
Write $\varphi_1(a_\lambda)=b_\lambda/g_0^N$ for all $\lambda$ where $b_\lambda\in k[Y_1,\ldots,Y_m]$.
Define $f_0$ and $h$ by
\begin{align*}
f_0 &= \left.\left(\sum_\lambda b_\lambda X^\lambda\right)\right/ g_0^N, \\
h &= \sum_\lambda b_\lambda X^\lambda\in k[X_1,\ldots,X_n,Y_1,\ldots,Y_m]\backslash \{0\}.
\end{align*}

Define $C=k[X_1,\ldots,X_n,Y_1,\ldots,Y_m][1/(g_0h)]$.

From the $k$-algebra morphism $R\stackrel{\varphi_1}{\longrightarrow} k[Y_1,\ldots,Y_m][1/g_0] %
\stackrel{\psi_1}{\longrightarrow}R$,
extend the base to $k[X_1,\ldots,X_n]$,
i.e.\ define $k$-algebra morphisms $\varphi_2:R[X_1,\ldots,X_n]\to k[X_1,\ldots,X_n,\break Y_1,\ldots,Y_m][1/g_0]$
and $\psi_2:k[X_1,\ldots,X_n,Y_1,\ldots,Y_m][1/g_0]\to R[X_1,\ldots,X_n]$ by requiring that both $\varphi_2$
and $\psi_2$ are morphisms over $k[X_1,\ldots,X_n]$ and define $\varphi_2(r)=\varphi_1(r)$ for any $r\in R$,
$\psi_2(G)=\psi_1(G)$ for any $G\in k[Y_1,\ldots,Y_m][1/g_0]$.

Note that $f\in R[X_1,\ldots,X_n]$ and $\varphi_2(f)=f_0=h/g_0^N$ by the above definition.
Hence $\varphi_2:C_0=R[X_1,\ldots,X_n][1/f]\to C=k[X_1,\ldots,X_n,Y_1,\ldots,Y_m][1/(g_0h)]$ is well-defined.

Moreover, from the relation $b_\lambda=g_0^N\cdot \varphi_1(a_\lambda)$,
we get $\psi_1(b_\lambda)=\psi_1(g_0)^N\cdot(\psi_1\circ\varphi_1)(a_\lambda) \break =a_\lambda\cdot\psi_1(g_0)^N$.
Note that $\psi_1(g_0)$ is a unit in $R$.
It follows that $\psi_2(h)=\psi_2(\sum_\lambda b_\lambda X^\lambda)$
$=\sum_\lambda \psi_2(b_\lambda)X^\lambda %
=\sum_\lambda \psi_1(b_\lambda) X^\lambda=\psi_1(g_0)^N\cdot \sum_\lambda a_\lambda X^\lambda %
=\psi_1(g_0)^N\cdot f$ is also a unit in $C_0$ since $1/f\in C_0$.
Thus $\psi_2:C=k_0[X_1,\ldots,X_n,Y_1,\ldots,Y_m][1/(g_0h)]\to C_0=R[X_1,\ldots,X_n][1/f]$ is also well-defined.
Clearly $\psi_2\circ \varphi_2=1_{C_0}$.

\bigskip
Step 3.
Note that we have the following diagram
\[
\begin{CD}
A @>\varphi>> C_0 @>\psi>> A \\
@.  @V{\varphi_2}VV \\
@.  C \\
@.  @V{\psi_2}VV \\
A @>\varphi>> C_0 @>\psi>> A
\end{CD}
\]
define $\widetilde{\varphi}=\varphi_2\circ \varphi$ and $\tilde{\psi}=\psi\circ\psi_2$.
It follows that $\tilde{\psi}\cdot \tilde{\varphi}=1_A$.
Thus $L$ is retract $k$-rational.
\end{proof}

%-----------------------------------------------S5
\section{Applications}

We recall a known result which will be used subsequently.

\begin{theorem}[{\cite[Theorem 1]{HK3}}] \label{t5.1}
Let $L$ be any field and $G$ be a finite group acting on
$L(x_1,\ldots,x_m)$, the rational function field of $m$ variables
over a field $L$. Suppose that

{\rm (i)} for any $\sigma\in G$, $\sigma(L)\subset L$;

{\rm (ii)} The restriction of the action of $G$ to $L$ is faithful;

{\rm (iii)} for any $\sigma\in G$,
\[
\begin{pmatrix} \sigma(x_1) \\ \vdots \\ \sigma(x_m) \end{pmatrix}
=A(\sigma)\cdot \begin{pmatrix} x_1 \\ \vdots \\ x_n \end{pmatrix}+B(\sigma)
\]
where $A(\sigma)\in GL_m(L)$ and $B(\sigma)$ is an $m\times 1$ matrix over $L$.

Then there exist $z_1,\ldots,z_m\in L(x_1,\ldots,x_m)$ such that $L(x_1,\ldots,x_m)=L(z_1,\ldots,z_m)$
and $\sigma(z_j)=z_j$ for any $\sigma\in G$, any $1\le j\le m$.
\end{theorem}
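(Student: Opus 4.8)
The plan is to prove this by the standard "linearization" trick: introduce enough auxiliary variables so that the translations $B(\sigma)$ can be absorbed and the action becomes \emph{linear} (homogeneous), and then diagonalize the linear part using faithfulness together with Hilbert's Theorem 90 (in its additive, $H^1$-vanishing form). Since the action of $G$ on $L$ is faithful, write $\pi = G$ viewed as a group of automorphisms of $L$ with $L^G = L^{\pi}$. The first step is to reduce from affine to linear: adjoin one extra variable $x_0$ on which $G$ acts trivially, and replace each $x_j$ by $x_j$ together with the relation coming from treating the affine action on $(x_1,\dots,x_m)$ as a linear action on $(x_0,x_1,\dots,x_m)$ via the block matrix $\left(\begin{smallmatrix} 1 & 0 \\ B(\sigma) & A(\sigma)\end{smallmatrix}\right)$. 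Actually, since we want $L(x_1,\dots,x_m)$ itself (not an extension) to be shown $G$-fixed-field-generated, I would instead directly handle the affine case: the cocycle-type object here is the map $\sigma \mapsto B(\sigma)$, which satisfies a $1$-cocycle condition relative to the linear action $A$, and one wants to "solve" it.

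The key steps, in order, are as follows. First, observe that $\sigma \mapsto A(\sigma)$ need not be a homomorphism $G \to GL_m(L)$ because the entries of $A(\sigma)$ lie in $L$ and $G$ acts on $L$; rather it is a cocycle: $\sigma(x) = A(\sigma) x + B(\sigma)$ composed with $\tau$ gives $A(\tau\sigma) = {}^{\tau}\!A(\sigma)\cdot A(\tau)$ and a corresponding twisted relation for $B$. Second, apply the generalized Speiser/Hilbert 90 statement: for the faithful Galois action of $\pi$ on $L$ with fixed field $L_0 := L^{\pi}$, the Galois cohomology set $H^1(\pi, GL_m(L))$ is trivial, so there exists $P \in GL_m(L)$ with $A(\sigma) = {}^{\sigma}\!P \cdot P^{-1}$ for all $\sigma$; equivalently, after the linear change of variables $y = P^{-1} x$ (entries of $P$ in $L$), the vector $(y_1,\dots,y_m)$ is permuted only through the coefficient field, i.e. $\sigma(y_j) = {}^{\sigma}\!(\text{stuff})$ collapses so that $\sigma(y_j) = y_j + (\text{new translation } B'(\sigma))$. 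Third, handle the residual translation $B'(\sigma)$, which is now an \emph{additive} $1$-cocycle $\pi \to L^m$ for the untwisted action; by the additive Hilbert 90 ($H^1(\pi, L) = 0$, and hence $H^1(\pi, L^m)=0$), it is a coboundary: $B'(\sigma) = {}^{\sigma}v - v$ for some $v \in L^m$. Setting $z = y - v$ then gives $\sigma(z_j) = z_j$ for all $\sigma$ and all $j$, while $L(x_1,\dots,x_m) = L(y_1,\dots,y_m) = L(z_1,\dots,z_m)$ since all changes of variables were $L$-linear and invertible. This is exactly the assertion.

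The main obstacle I expect is not the cohomological input — that is classical — but rather the \emph{bookkeeping of the non-commuting actions}: because $G$ acts on $L$, one must be careful that "linear change of variables over $L$" interacts correctly with $\sigma$, and that the matrix identity one needs from Hilbert 90 is the cocycle identity $A(\sigma\tau) = {}^{\sigma}\!A(\tau)\,A(\sigma)$ (or its transpose, depending on conventions for left versus right actions on column vectors), not the naive homomorphism identity. Getting the variance right — whether $P$ should satisfy $A(\sigma) = {}^{\sigma}\!P\cdot P^{-1}$ or $P\cdot {}^{\sigma}\!P^{-1}$ — requires pinning down conventions once and carrying them consistently; an off-by-a-transpose error here would be fatal. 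A secondary technical point is that one should first observe $G$ acts faithfully on $L$ (hypothesis (ii)) so that $L/L^G$ is a genuine Galois extension with group $G$, which is what licenses the use of $H^1(\pi,-)$ triviality; if the action were not faithful one would instead work with the quotient acting on $L$, but hypothesis (ii) makes this unnecessary.

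I would also remark that Theorem \ref{t5.1} is essentially the classical result of Hajja--Kang (and, in the purely linear case, goes back to Speiser's refinement of Hilbert 90), so the proof above is the expected one; the only content beyond the cohomology is the reduction of the affine action to a linear one, which is routine.
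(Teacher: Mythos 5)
Your proof is correct; the paper itself gives no proof of Theorem \ref{t5.1}, merely citing [HK3, Theorem 1], and your argument is essentially the one used there: faithfulness plus Artin's theorem makes $L/L^G$ Galois with group $G$, Speiser's theorem ($H^1(G,GL_m(L))=1$) kills the linear part, and additive Hilbert 90 kills the translation cocycle. The only cosmetic difference is that the cited proof packages both steps at once by applying the invariant-basis (Galois descent) lemma to the $G$-stable $L$-span of $1,x_1,\ldots,x_m$, which absorbs the affine term automatically and avoids the cocycle-convention bookkeeping you rightly flag.
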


\begin{lemma} \label{l5.2}
Let $K\subset L$ be $k$-field. If $L$ is retract $K$-rational,
then $K$ is a dense retraction of $L$.
\end{lemma}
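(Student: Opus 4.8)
The plan is to unwind Definition \ref{d1.1} applied to the hypothesis ``$L$ is retract $K$-rational'', and then to exhibit the regular $K$-algebra required by Definition \ref{d3.3} as a suitable localization of the affine domain produced by retract rationality.

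First I would invoke Definition \ref{d1.1}: there are an affine $K$-domain $A$ with quotient field $L$, a localized polynomial ring $K[X_1,\ldots,X_n][1/f]$, and $K$-algebra morphisms $\varphi:A\to K[X_1,\ldots,X_n][1/f]$ and $\psi:K[X_1,\ldots,X_n][1/f]\to A$ with $\psi\circ\varphi=1_A$; in particular $\varphi$ is injective, and $\psi(f)$ is a unit of $A$ because $1/f$ lies in the source of $\psi$. Since $A$ is a finitely generated domain over a field it is excellent, so the regular locus of $\fn{Spec}A$ is open and nonempty (it contains the generic point); choose $a\in A\setminus\{0\}$ with $A[1/a]$ regular. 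Writing $\varphi(a)=p/f^{\ell}$ with $p\in K[X_1,\ldots,X_n]\setminus\{0\}$ and putting $f'=fp$, so that $K[X_1,\ldots,X_n][1/f']=K[X_1,\ldots,X_n][1/f][1/p]$, I would then check the compatibility: $\varphi$ extends to a morphism $A[1/a]\to K[X_1,\ldots,X_n][1/f']$ since $1/\varphi(a)=f^{\ell}/p$ lies in the target, while $\psi(p)=\psi(\varphi(a))\,\psi(f)^{\ell}=a\,\psi(f)^{\ell}$ is a unit of $A[1/a]$, so $\psi$ extends to $K[X_1,\ldots,X_n][1/f']\to A[1/a]$, and $\psi\circ\varphi=1$ persists on $A[1/a]$. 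Replacing $(A,f)$ by $(A[1/a],f')$, I may assume from the outset that $A$ is a \emph{regular} affine $K$-domain with quotient field $L$ carrying such $\varphi,\psi$; set $R:=A$, which satisfies condition (i) of Definition \ref{d3.3}.

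For condition (ii), fix $r\in R\setminus\{0\}$. Then $\varphi(r)\ne 0$, say $\varphi(r)=g/f^{m}$ with $g\in K[X_1,\ldots,X_n]\setminus\{0\}$. As $K$ contains the infinite field $k$, it is infinite, so the nonzero polynomial $fg$ has a non-root $c=(c_1,\ldots,c_n)\in K^{n}$, i.e.\ $f(c)\ne 0$ and $g(c)\ne 0$. Evaluation at $c$ is a $K$-algebra morphism $\fn{ev}_c:K[X_1,\ldots,X_n][1/f]\to K$, and $(\fn{ev}_c\circ\varphi)(r)=g(c)/f(c)^{m}\ne 0$. Hence the $K$-algebra morphism $\fn{ev}_c\circ\varphi:R\to K$ sends $r$ to a unit of $K$, so it extends to a $K$-algebra morphism $R[1/r]\to K$, as required. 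This shows $K$ is a dense retraction of $L$.

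The only delicate point is the bookkeeping in the middle paragraph, namely that the localization of $A$ rendering it regular can be chosen compatibly so that $\varphi$ and $\psi$ survive on the localizations; everything else is immediate once one uses the infinitude of $K$ to produce the specialization $\fn{ev}_c$.
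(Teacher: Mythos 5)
Your proposal is correct and follows essentially the same route as the paper: first localize $A$ at a nonzero element killing the singular locus (adjusting $f$ so that $\varphi$, $\psi$ and $\psi\circ\varphi=1$ survive, exactly the compatibility the paper glosses over), then for each $r$ use the infinitude of $K$ to find a specialization $\Phi$ of the localized polynomial ring with $\Phi(f\cdot\varphi(r))\neq 0$, so that $\Phi\circ\varphi$ extends to $R[1/r]\to K$. Your write-up in fact supplies the bookkeeping (the choice $f'=fp$ and the verification that $\psi(p)=a\,\psi(f)^{\ell}$ is a unit) that the paper's terser proof leaves implicit.
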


\begin{proof}
Let $A$ be an affine $K$-domain whose quotient field is $L$
arising from the definition of retract $K$-rationality. Let
$K[X_1, \cdots,X_n][1/f]$ be the localized polynomial ring and
$\varphi: A\to K[X_1,\ldots,X_n][1/f]$,
$\psi:K[X_1,\ldots,X_n][1/f]$ be the $K$-morphisms satisfying
$\psi \circ \varphi = 1_A$.

Since the singular locus of $A$ defines a non-zero ideal $I$ in
$A$, we may choose any non-zero element $\alpha \in I$; then
replace $A$ by $A[\alpha]$ and replace $K[X_1,\ldots,X_n][1/f]$ by
$K[X_1,\ldots,X_n]$ $[1/(f \phi (r))]$. Thus we may assume that
$A$ is a regular domain from the beginning. For any $r\in
A\backslash\{0\}$, let $g=\varphi(r)$. Find a $K$-morphism
$\Phi:K[X_1,\ldots,X_n][1/f] \rightarrow K$ such that $\Phi(fg)
\neq 0$. Then $\Phi \circ \phi : A \rightarrow K$ is the required
map.
\end{proof}

\bigskip
We consider an application of Theorem \ref{t4.2}.

Recall Theorem \ref{t2.8} provides a criterion of retract
rationality for $K(M)^G$ when $G$ is faithful on $K$ and $M$ is a
$G$-lattice (it is unnecessary to assume that $M$ is a faithful
$G$-lattice). Now we consider the retract rationality for $k(M)^G$
where $G$ acts trivially on the field $k$.

\begin{theorem}\label{t5.3}
Let $G$ be a finite group acting trivially on the field $k$, and
$M$ be a faithful $G$-lattice.

{\rm (i) (Saltman {\cite[Corollary 1.6]{Sa5}})} If $k(M)^G$ is
retract $k$-rational, then $k(G)$ is also retract $k$-rational.

{\rm (ii) (Saltman {\cite[Proposition 1.7]{Sa5}})} If $0
\rightarrow M \rightarrow N \rightarrow E \rightarrow 0$ is an
exact sequence of $G$-lattices where $E$ is invertible, then
$k(N)^G$ is retract $k(M)^G$-rational.
\end{theorem}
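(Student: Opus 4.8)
In both parts the plan is to enlarge the given lattice so that the field under consideration becomes the $G$-invariant field of a (possibly twisted) monomial action sitting over a field on which $G$ acts \emph{faithfully}, and then to combine Saltman's criterion (Theorem~\ref{t2.8}) and the transitivity theorem (Theorem~\ref{t4.2}) with the ``dense retraction'' lemmas (Lemma~\ref{l3.4}(iii),(iv) and Lemma~\ref{l5.2}). I use repeatedly that, since $M$ is faithful, $k(M)/k(M)^G$ is Galois with group $G$ (Artin), and likewise $k(\bm{Z}[G])/k(G)$.

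For part (i) I would work with $L:=k(M\oplus\bm{Z}[G])^G$. Because $M\oplus\bm{Z}[G]$ is a direct sum, $k(M\oplus\bm{Z}[G])=k(M)(\bm{Z}[G])$ with $G$ acting $k(M)$-semilinearly by permuting the $\bm{Z}[G]$-coordinates; by Galois descent (Speiser's theorem, i.e.\ $H^1(G,\mathrm{GL}_n(k(M)))=1$) this semilinear action has a $G$-fixed $k(M)$-basis, so $L=k(M)^G(z_1,\dots,z_{|G|})$ is $k(M)^G$-rational. Since $k(M)^G$ is retract $k$-rational by hypothesis, Lemma~\ref{l3.4}(i) gives that $L$ is retract $k$-rational (alternatively, Theorem~\ref{t2.8} applied to the lattice $\bm{Z}[G]$, whose flabby class is $[0]$, shows $L$ is retract $k(M)^G$-rational and then Theorem~\ref{t4.2} applies). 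On the other hand, writing the same field as $k(\bm{Z}[G])(M)$ realizes $L$ as $k(\bm{Z}[G])(M)^G$ over the Galois extension $k(\bm{Z}[G])/k(G)$, so Lemma~\ref{l3.4}(iv) shows that $k(G)$ is a dense retraction of $L$; Lemma~\ref{l3.4}(iii) then yields that $k(G)$ is retract $k$-rational.

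For part (ii) I would choose $E'$ with $P:=E\oplus E'$ a permutation lattice (possible since $E$ is invertible) and set $L':=k(N\oplus E')^G$. Adjoining $E'$ to the given sequence gives $0\to M\to N\oplus E'\to P\to 0$ with $P$ permutation; lifting a $G$-permuted $\bm{Z}$-basis of $P$ to $N\oplus E'$ presents $k(N\oplus E')$ as $k(M)(y_1,\dots,y_s)$ so that the $k(M)$-span $W=\bigoplus_i k(M)y_i$ is a $G$-stable $k(M)$-subspace --- here one uses that $P$ is a permutation lattice, so $G$ moves the $y_i$ by permutations up to scalars in $k(M)^\times$ and hence stays in $W$ --- carrying a semilinear $G$-action. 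Galois descent again furnishes a $G$-fixed $k(M)$-basis $z_1,\dots,z_s$ of $W$, whence $L'=k(M)^G(z_1,\dots,z_s)$ is $k(M)^G$-rational, in particular retract $k(M)^G$-rational. Separately, the direct sum $N\oplus E'$ gives $k(N\oplus E')=k(N)(E')$ with a purely monomial action on the $E'$-coordinates over the Galois extension $k(N)/k(N)^G$ (note $N\supseteq M$ is faithful), and since $E'$ is invertible its flabby class is invertible, so Theorem~\ref{t2.8} shows $L'$ is retract $k(N)^G$-rational; by Lemma~\ref{l5.2}, $k(N)^G$ is then a dense retraction of $L'$. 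Combining this with the retract $k(M)^G$-rationality of $L'$, Lemma~\ref{l3.4}(iii) gives that $k(N)^G$ is retract $k(M)^G$-rational.

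The identifications of invariant fields of direct sums with iterated monomial invariant fields, and the bookkeeping with the chosen lifts, are routine. The step I expect to be the crux --- and to require the most care --- is establishing that $k(N\oplus E')^G$ is $k(M)^G$-rational in part (ii): one has to see that the invertibility of $E$, i.e.\ that $P=E\oplus E'$ is a permutation lattice, is exactly what keeps the span $W$ of the lifted coordinates $G$-stable so that the semilinear Hilbert~90 can be applied; for a non-permutation quotient the $G$-orbit of a coordinate escapes $W$ and the reduction breaks down. A secondary point to keep honest is that $M$, and hence $N$, is faithful, which is what makes $k(M)/k(M)^G$ and $k(N)/k(N)^G$ Galois with group exactly $G$.
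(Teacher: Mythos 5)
Your argument is correct. Note, however, that the paper does not prove Theorem \ref{t5.3} at all: it is quoted from Saltman (\cite[Corollary 1.6 and Proposition 1.7]{Sa5}), so there is no in-paper proof to compare against. What you wrote is essentially Saltman's own line of reasoning, reassembled from the toolkit this paper develops: the no-name lemma (Theorem \ref{t5.1}, equivalently Speiser/Hilbert 90 for the semilinear action, which is legitimate here because faithfulness of $M$, hence of $N$, makes $k(M)/k(M)^G$ and $k(N)/k(N)^G$ Galois with group $G$), Saltman's criterion Theorem \ref{t2.8}, and the dense-retraction lemmas \ref{l3.4}(iii),(iv) and \ref{l5.2}; indeed part (i) is exactly the mechanism the paper itself uses in the second proof of (i)$\Rightarrow$(v) of Theorem \ref{t5.4}, read in the opposite direction. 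All the steps check out, including the crux you flagged in (ii): lifting a permuted basis of $P=E\oplus E'$ keeps the $G$-orbits of the lifts inside the $k(M)$-span up to monomial scalars, so descent applies and $k(N\oplus E')^G$ is $k(M)^G$-rational. The only point left implicit is the routine verification that $[E']^{fl}$ is invertible: the split sequence $0\to E'\to E\oplus E'\to E\to 0$, with $E\oplus E'$ permutation and $E$ flabby because invertible (Lemma \ref{l2.4}), is a flabby resolution, so $[E']^{fl}=[E]$; you may as well say this in one line rather than assert it.
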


We may wonder whether some criterion of retract rationality for
$k(M)^G$ is available. Although we cannot find a complete
solution, we are able to answer this question when $[M]^{fl}$ is
invertible.

\begin{theorem} \label{t5.4}
Let $G$ be a finite group. For any $G$-lattice $M$ in the
following statements, it is assumed that $G$ acts on $k(M)$ by
purely monomial $k$-automorphisms. The following statements are
equivalent, \vspace*{-2mm} \leftmargini=12mm
\begin{enumerate} \itemsep=-1pt
\item[{\rm (i)}] $k(G)$ is retract $k$-rational;
\item[{\rm (ii)}] $k(M)^G$ is retract $k$-rational for some faithful permutation $G$-lattice $M$;
\item[{\rm (iii)}]
$k(M)^G$ is retract $k$-rational for some faithful $G$-lattice $M$ such that $[M]^{fl}$ is invertible;
\item[{\rm (iv)}] $k(M)^G$ is retract $k$-rational for all faithful permutation $G$-lattices $M$;
\item[{\rm (v)}]
$k(M)^G$ is retract $k$-rational for all faithful $G$-lattices $M$ satisfying that $[M]^{fl}$ are invertible;
\item[{\rm (vi)}] $k(M)^G$ is retract $k$-rational for some faithful $G$-lattice $M$.
\end{enumerate}
\end{theorem}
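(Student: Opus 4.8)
The plan is to dispatch the formal implications first and then concentrate on the single substantial step, which is (i) $\Rightarrow$ (v); granted that, everything else is bookkeeping. The regular representation lattice $\bm{Z}[G]$ is a faithful permutation $G$-lattice, it satisfies $k(\bm{Z}[G])^G = k(G)$ by the very definition of $k(G)$, and $[\bm{Z}[G]]^{fl}=0$ is invertible; hence (i) is a particular case of each of (ii), (iv) and (v). Since a faithful permutation lattice, and more generally a faithful lattice with invertible flabby class, is in particular a faithful lattice, we get (iv) $\Rightarrow$ (ii) $\Rightarrow$ (vi) and (v) $\Rightarrow$ (iii) $\Rightarrow$ (vi); moreover (v) $\Rightarrow$ (iv) because every permutation lattice $P$ has $[P]^{fl}=0$ invertible. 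Finally (vi) $\Rightarrow$ (i) is Theorem \ref{t5.3}(i). Thus it remains only to prove (i) $\Rightarrow$ (v).

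First I would settle the permutation case, i.e.\ establish the equivalence (i) $\Leftrightarrow$ (iv). Let $M$ be a faithful permutation $G$-lattice. Put $L_1 = k(\bm{Z}[G])$, on which $G$ acts faithfully by permuting the Laurent variables, and extend this action to the rational function field $L_1(x_1,\ldots,x_m)$ over $L_1$, where $x_1,\ldots,x_m$ is a $\bm{Z}$-basis of $M$ permuted by $G$. This action of $G$ is $L_1$-linear (indeed given by permutation matrices over $\bm{Z}$), so Theorem \ref{t5.1} applies and shows that $L_1(x_1,\ldots,x_m)^G$ is rational over $L_1^G=k(G)$. But $L_1(x_1,\ldots,x_m)^G = k(\bm{Z}[G]\oplus M)^G$, so $k(M\oplus\bm{Z}[G])^G$ is $k(G)$-rational. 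Applying Theorem \ref{t5.1} a second time with the roles of $M$ and $\bm{Z}[G]$ interchanged---here one uses that $M$ itself is faithful---shows that $k(M\oplus\bm{Z}[G])^G$ is also $k(M)^G$-rational. By Lemma \ref{l3.4}(i), retract $k$-rationality is unaffected by adjoining algebraically independent variables, so $k(M)^G$ is retract $k$-rational if and only if $k(M\oplus\bm{Z}[G])^G$ is, if and only if $k(G)$ is; since $M$ was an arbitrary faithful permutation $G$-lattice, this yields (i) $\Leftrightarrow$ (iv).

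For (i) $\Rightarrow$ (v), let $M$ be a faithful $G$-lattice with $[M]^{fl}$ invertible. Starting from a flabby resolution $0\to M\to P_0\to F_0\to 0$ (Theorem \ref{t2.6}) and using that $[F_0]=[M]^{fl}$ is invertible, I would add suitable permutation lattices to $P_0$ and $F_0$ to produce an exact sequence $0\to M\to P\to E\to 0$ in which $P$ is a permutation lattice and $E$ is invertible; moreover $P$ is faithful, since it contains the faithful sublattice $M$. By Theorem \ref{t5.3}(ii), $k(P)^G$ is retract $k(M)^G$-rational, whence by Lemma \ref{l5.2} the field $k(M)^G$ is a dense retraction of $k(P)^G$. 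On the other hand $P$ is a faithful permutation lattice, so by the case already treated $k(P)^G$ is retract $k$-rational (this is where the hypothesis (i) enters). Lemma \ref{l3.4}(iii) now forces $k(M)^G$ to be retract $k$-rational, which is statement (v), and the chain of implications is complete.

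As for the difficulty: the only point demanding any care is manufacturing the exact sequence $0\to M\to P\to E\to 0$ with $P$ permutation, $E$ invertible and $P$ still faithful, but this is a routine manipulation of flabby resolutions together with the definition of an invertible flabby class. The genuine idea is that Theorem \ref{t5.3}(ii) combined with Lemma \ref{l5.2} upgrades the one-sided relation ``$k(P)^G$ lies over $k(M)^G$ through an invertible lattice extension'' into the two-sided statement that $k(M)^G$ is a $k$-retract of $k(P)^G$; after that, the no-name reduction to permutation lattices (via Theorem \ref{t5.1}) and Lemma \ref{l3.4}(iii) finish the argument without any further input.
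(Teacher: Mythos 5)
Your proposal is correct and follows essentially the same route as the paper: your treatment of the formal implications, the double application of Theorem \ref{t5.1} to $k(P\oplus\bm{Z}[G])^G$ to settle the permutation case, and the chain Theorem \ref{t5.3}(ii) $\rightarrow$ Lemma \ref{l5.2} $\rightarrow$ Lemma \ref{l3.4}(iii) for a faithful $M$ with $[M]^{fl}$ invertible reproduce the paper's second proof of (i)$\Rightarrow$(v) almost verbatim (your extra step of adding permutation lattices to the flabby resolution is harmless, since $[F_0]$ invertible already makes $F_0$ itself invertible). The only difference is that the paper's first proof instead invokes Theorem \ref{t2.8} together with the transitivity Theorem \ref{t4.2}, a route you did not need.
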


\begin{proof}
${\rm (i)}\Rightarrow{\rm (vi)}$ by taking $M=\bm{Z}[G]$.

${\rm (vi)}\Rightarrow{\rm (i)}$ by Theorem \ref{t5.3}.

The implications ``${\rm (v)}\Rightarrow{\rm (iv)}\Rightarrow{\rm
(ii)}\Rightarrow{\rm (iii)}\Rightarrow{\rm (i)}$" is easy. It
remains to show that ``${\rm (i)}\Rightarrow{\rm (v)}$".

For any faithful $G$-lattice $M$ with $[M]^{fl}$ invertible,
we will show that $k(M)^G$ is retract $k$-rational.

Define $N:=\bm{Z}[G]$ and consider $k(M\oplus N)^G$.

By Theorem \ref{t5.1}, $k(M\oplus N)^G=\{k(M)(z_1,\ldots,z_l)\}^G$
where $\sigma\cdot z_j=z_j$ for any $\sigma\in G$, any $1\le j\le
l=|G|$. Thus $k(M\oplus N)^G=k(M)^G(z_1,\ldots,z_l)$. It follows
that $k(M)^G$ is retract $k$-rational if and only if so is
$k(M\oplus N)^G$ by Lemma \ref{l3.4}.

Now $k(M\oplus N)^G\simeq k(N\oplus M)^G=\{k(N)(M)\}^G$ is retract rational over $k(N)^G$ by Theorem \ref{t2.8}.
Since $k(N)^G=k(G)$ is retract $k$-rational.
Apply Theorem \ref{t4.2}.
We find that $k(M\oplus N)^G$ is retract $k$-rational.

\bigskip
Here is another proof of ``${\rm (i)}\Rightarrow{\rm (v)}$".

Suppose that $k(G)$ is retract $k$-rational and $M$ is a faithful
$G$-lattice with $[M]^{fl}$ invertible.

Let $0\to M\to P\to E\to 0$ be the flabby resolution of $M$ where
$P$ is a permutation lattice and $E$ is an invertible lattice
because $[M]^{fl}$ is invertible. By Theorem \ref{t5.3}, we find
that $k(P)^G$ is retract rational over $k(M)^G$. Thus $k(M)^G$ is
a dense retraction of $k(P)^G$ by Lemma \ref{l5.2}. We may apply
Lemma \ref{l3.4} to show that $k(M)^G$ is retract $k$-rational, if
it is known that $k(P)^G$ is retract $k$-rational. Since
$k(G)=k(\bm{Z}[G]^G)$, we may apply Theorem \ref{t5.1} twice to
$k(P \oplus \bm{Z}[G])^G$ as the preceding proof. Thus we find
that $k(G)$ is retract $k$-rational if and only if so is $k(P)^G$.
Done.
\end{proof}

\begin{idef}{Remarks.}
(i) It is known that $k(M)^G$ is $k$-rational for any $G$-lattice $M$ with $\fn{rank}_{\bm{Z}}(M) \break \le 3$.
See \cite{HK1,HK2,HR}.

(ii) Note that \cite[Theorem 4.3]{Ka2} may be regarded as a hybrid
of Theorem \ref{t2.8} and the above Theorem \ref{t5.4} (with the
help of the following Theorem \ref{t5.5}).
\end{idef}

\begin{theorem} \label{t5.5}
Let $G=\langle \sigma\rangle$ be a cyclic group of order $n=2^rm$
where $2 \nmid m$. The following statements are equivalent,

{\rm (i)} $k(G)$ is retract $k$-rational;

{\rm (ii)} $k(M)^G$ is retract $k$-rational for any $G$-lattice $M$;

{\rm (iii)} Either $\fn{char}k=2$ or $\fn{char}k\ne 2$ such that
$k(\zeta_{2^r})$ is a cyclic extension of $k$.
\end{theorem}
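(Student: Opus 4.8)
The plan is to run the cycle of implications (ii)~$\Rightarrow$~(i), (i)~$\Leftrightarrow$~(iii) and (iii)~$\Rightarrow$~(ii); the first two are immediate and essentially all the content lies in (iii)~$\Rightarrow$~(ii). For (ii)~$\Rightarrow$~(i), I would apply (ii) to the regular $G$-lattice $M=\bm{Z}[G]$ and recall that $k(\bm{Z}[G])^G=k(x_g:g\in G)^G=k(G)$. For (i)~$\Leftrightarrow$~(iii), note that $G=C_n$ is a finite abelian group whose exponent is exactly $n=2^rm$ with $2\nmid m$, so this is precisely Theorem~\ref{t3.7}.

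For (iii)~$\Rightarrow$~(ii), let $M$ be an arbitrary $G$-lattice, put $H=\fn{Ker}(G\to GL(M))$ and $\bar{G}=G/H$. Then $\bar{G}$ is cyclic, $M$ is a faithful $\bar{G}$-lattice, and $k(M)^G=k(M)^{\bar{G}}$ since the $G$-action on $k(M)$ factors through $\bar{G}$. Writing $|\bar{G}|=2^{r'}m'$ with $2\nmid m'$, one has $r'\le r$, and the first thing I would check is that condition (iii) descends to $\bar{G}$: if $\fn{char}k=2$ this is trivial, and if $\fn{char}k\ne 2$ then $k(\zeta_{2^{r'}})=k(\zeta_{2^r}^{2^{r-r'}})\subseteq k(\zeta_{2^r})$ is an intermediate field which is itself Galois over $k$ (a splitting field of $X^{2^{r'}}-1$, separable since $\fn{char}k\ne 2$), so $\fn{Gal}(k(\zeta_{2^{r'}})/k)$ is a quotient of the cyclic group $\fn{Gal}(k(\zeta_{2^r})/k)$ and hence cyclic. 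By Theorem~\ref{t3.7} applied to $\bar{G}\cong C_{|\bar{G}|}$, the field $k(\bar{G})$ is then retract $k$-rational. Since $\bar{G}$ is cyclic, all its Sylow subgroups are cyclic, so by the Endo--Miyata theorem (Theorem~\ref{t2.5}) every flabby $\bar{G}$-lattice is invertible; in particular $[M]^{fl}$ is invertible. Applying the implication (i)~$\Rightarrow$~(v) of Theorem~\ref{t5.4} to the group $\bar{G}$ and the faithful $\bar{G}$-lattice $M$ then gives that $k(M)^{\bar{G}}=k(M)^G$ is retract $k$-rational; as $M$ was arbitrary, (ii) follows.

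The one delicate point is precisely this reduction to a faithful lattice: Theorem~\ref{t5.4} together with its invertibility hypothesis in (v) is stated only for faithful lattices, so one must pass to the quotient $\bar{G}=G/H$ and verify both that condition (iii) is inherited by $\bar{G}$ (the elementary remark on cyclotomic subextensions above) and that ``$[M]^{fl}$ invertible'' holds automatically over the cyclic group $\bar{G}$ (the Endo--Miyata theorem). Once these are in place, the proof is a direct assembly of Theorems~\ref{t2.5}, \ref{t3.7} and~\ref{t5.4}, and I do not anticipate any further obstruction.
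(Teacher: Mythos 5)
Your proposal is correct and follows essentially the same route as the paper: (i)$\Leftrightarrow$(iii) via Theorem \ref{t3.7}, (ii)$\Rightarrow$(i) via the regular lattice (Theorem \ref{t5.4}), and for the remaining implication the reduction to a faithful lattice over the quotient $\bar{G}=G/H$, the observation that cyclotomic subextensions of the cyclic extension $k(\zeta_{2^r})/k$ are again cyclic, Endo--Miyata (Theorem \ref{t2.5}) to get $[M]^{fl}$ invertible, and then Theorem \ref{t5.4}. This matches the paper's proof, merely organized with the faithfulness reduction done up front rather than as a second case.
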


\begin{proof}
The equivalence of (i) and (iii) follows from Theorem \ref{t3.7}.

${\rm (ii)}\Rightarrow{\rm (i)}$ by Theorem \ref{t5.4}.

${\rm (i)}\Rightarrow{\rm (ii)}$ If $M$ is a faithful $G$-lattice,
then $[M]^{fl}$ is invertible by Theorem \ref{t2.5}. Hence
$k(M)^G$ is retract $k$-rational by Theorem \ref{t5.4}.

If $M$ is not faithful, find a normal subgroup $H$ of $G$ so that
$M$ is a faithful lattice over $G/H$. Let $n'=2^sm'$ be the order
of $G/H$ with $2 \nmid m'$. Since $n'\mid n$ and $k(\zeta_{2^r})$
is cyclic over $k$, it follows that $k(\zeta_{2^s})$ is also
cyclic over $k$. Thus $k(G/H)$ is retract $k$-rational by Theorem
\ref{t3.7}. Now we may apply the same arguments in the preceding
paragraph to the group $G/H$.
\end{proof}

We recall a theorem in group theory.

\begin{theorem}[{\cite[Theorem 11, p.175]{Za}}] \label{t5.6}
Let $G$ be a finite group.
Then the following two statements are equivalent,

{\rm (i)} All the Sylow subgroups of $G$ are cyclic;

{\rm (ii)} $G$ is of the form $G=\langle \sigma,\tau\rangle$ with relations $\sigma^m=\tau^n=1$,
$\tau\sigma\tau^{-1}=\sigma^r$ where $m$, $n$, $r$ are positive integers satisfying
\[
\gcd\{(r-1)n,m\}=1 \mbox{ ~ and ~ } r^n\equiv 1 \!\!\pmod{m}.
\]
\end{theorem}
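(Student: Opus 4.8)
The plan is to prove the equivalence (i) $\Leftrightarrow$ (ii) by the standard structure theory of finite groups all of whose Sylow subgroups are cyclic (the so-called Z-groups). The direction (ii) $\Rightarrow$ (i) is the easier one: given the presentation $G=\langle\sigma,\tau\rangle$ with $\sigma^m=\tau^n=1$, $\tau\sigma\tau^{-1}=\sigma^r$, and $\gcd\{(r-1)n,m\}=1$, $r^n\equiv 1\pmod m$, I would first check that $|G|=mn$, that $A:=\langle\sigma\rangle$ is a normal cyclic subgroup of order $m$, and that $G/A\cong\langle\bar\tau\rangle$ is cyclic of order $n$. In particular both $A$ and $G/A$ are cyclic. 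The condition $\gcd\{(r-1)n,m\}=1$ forces $\gcd\{m,n\}=1$, so $G=A\rtimes\langle\tau\rangle$ is a semidirect product of coprime cyclic groups. For any prime $p$, a Sylow $p$-subgroup is therefore either contained (up to conjugacy) in $A$ or maps isomorphically to a Sylow $p$-subgroup of $G/A$; in both cases it is a subgroup of a cyclic group, hence cyclic.

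For (i) $\Rightarrow$ (ii) I would invoke the classical theorem of Burnside--Zassenhaus on Z-groups: if every Sylow subgroup of $G$ is cyclic, then $G$ is metacyclic, and more precisely $G$ has a normal cyclic subgroup $A$ of order $m$ with cyclic quotient $G/A$ of order $n$ and $\gcd\{m,n\}=1$. The key inputs are (a) a Z-group is solvable (this already follows from, e.g., the fact that a minimal counterexample would be simple with cyclic Sylow $2$-subgroup, impossible by Burnside's normal $p$-complement theorem applied to the $2$-part), and (b) in a solvable Z-group the commutator subgroup $G'$ and the quotient $G/G'$ are each cyclic of coprime orders, with $G=G'\rtimes(G/G')$. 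Taking $A=G'$, pick a generator $\sigma$ of $A$ and a lift $\tau\in G$ of a generator of $G/A$; then $\tau\sigma\tau^{-1}=\sigma^r$ for some integer $r$, and $\tau^n$ centralizes $A$ and lies in $A$, so in fact $\tau$ can be chosen with $\tau^n=1$ (adjusting $\tau$ by an element of $A$, using $\gcd\{m,n\}=1$). This gives the presentation; the relation $r^n\equiv 1\pmod m$ is automatic from $\tau^n=1$, and $\gcd\{(r-1)n,m\}=1$ follows because $\gcd\{n,m\}=1$ and $\gcd\{r-1,m\}=1$ (the latter because conjugation by $\tau$ acts without nonzero fixed points on $A$ in the relevant sense, i.e. $C_G(A)=A$ as $G/A$ acts faithfully — otherwise a Sylow subgroup would fail to be cyclic).

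The main obstacle is the subtle bookkeeping in (i) $\Rightarrow$ (ii): ensuring the lift $\tau$ can be normalized to have order exactly $n$ and verifying the precise coprimality condition $\gcd\{(r-1)n,m\}=1$ rather than merely $\gcd\{m,n\}=1$. In practice this is handled by a Schur--Zassenhaus splitting argument (since $\gcd\{|A|,|G/A|\}=1$, the extension $1\to A\to G\to G/A\to 1$ splits and all complements are conjugate) together with the observation that the action of a generator of $G/A$ on $A\cong\bm{Z}/m\bm{Z}$ is multiplication by $r$ with $\gcd\{r-1,m\}=1$; the last fact is exactly the statement that no nontrivial element of $A$ is fixed by the whole complement, which one sees by noting that such a fixed element would generate, together with the complement, a non-cyclic abelian subgroup whose Sylow subgroups could not all be cyclic. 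Since Theorem \ref{t5.6} is quoted from \cite[Theorem 11, p.175]{Za}, I would in fact simply cite Zassenhaus's book for the full statement and only sketch the verification above for the reader's orientation.
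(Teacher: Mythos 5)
The paper gives no proof of Theorem \ref{t5.6} at all: it is quoted directly from Zassenhaus \cite[Theorem 11, p.175]{Za}, so your decision to ultimately cite the book and only sketch the argument matches the paper. Your direction (ii) $\Rightarrow$ (i), and the overall plan for (i) $\Rightarrow$ (ii) (solvability of groups with all Sylow subgroups cyclic, $A=G'$ cyclic with cyclic quotient of coprime order, Schur--Zassenhaus splitting, normalizing $\tau$ to have order exactly $n$), are the standard route and are fine.

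However, your justification of the crucial condition $\gcd\{r-1,m\}=1$ is wrong on two counts. First, it is not true that $G/A$ acts faithfully on $A=G'$ in such a group: take $G=C_7\rtimes C_6$ with $C_6$ acting through its quotient $C_3$; all Sylow subgroups are cyclic and $G'=C_7$, yet the kernel of the action of $G/G'$ on $G'$ has order $2$, so $C_G(A)\neq A$. Second, the fixed-point argument collapses precisely because $\gcd\{m,n\}=1$: if $d=\gcd\{r-1,m\}>1$, then $\sigma^{m/d}$ is indeed fixed by $\tau$, but $\langle \sigma^{m/d},\tau\rangle$ is abelian of order $dn$ with $\gcd\{d,n\}=1$, hence cyclic, so no non-cyclic Sylow subgroup arises and there is no contradiction. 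The correct reason is that $A$ was chosen to be the commutator subgroup: modulo $\langle\sigma^{d}\rangle$ the images of $\sigma$ and $\tau$ commute, so $G'\subseteq\langle\sigma^{d}\rangle$; since $G'=\langle\sigma\rangle$ has order $m$ while $\langle\sigma^{d}\rangle$ has order $m/d$, this forces $d=1$. (The condition genuinely depends on taking $A=G'$: for an arbitrary normal cyclic $A$ with cyclic quotient it can fail, e.g. $A=C_3$ inside $G=C_{15}$ gives $r=1$ and $\gcd\{r-1,3\}=3$.) With this replacement your sketch is correct, and since both you and the paper rest on the citation to Zassenhaus, the flaw affects only the orientation sketch, not the logical structure of the paper.
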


\bigskip
Note that, in the condition (ii) of the above theorem, if $r=1$,
it is understood as ``$\gcd\{n,m\}=1$".

The following result is an extension of Theorem \ref{t5.5}. We
choose to formulate only one direction among the various
directions of implication.

\begin{theorem} \label{t5.7}
Let $G$ be a finite group satisfying the property in Theorem
\ref{t5.6}. If $k(G)$ is retract $k$-rational, then $k(M)^G$ is
retract $k$-rational for any $G$-lattice $M$.
\end{theorem}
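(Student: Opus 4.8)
The plan is to separate off the case of a faithful $G$-lattice and then reduce everything to it. If $M$ is a faithful $G$-lattice, note that because every Sylow subgroup of $G$ is cyclic, Theorem \ref{t2.5} shows that every flabby $G$-lattice is invertible, so $[M]^{fl}$ is invertible (indeed for every $G$-lattice). Since $k(G)$ is retract $k$-rational by hypothesis, the implication ${\rm (i)}\Rightarrow{\rm (v)}$ of Theorem \ref{t5.4}, applied to $G$, yields that $k(M)^G$ is retract $k$-rational. For a general $G$-lattice $M$, let $H$ be the kernel of $G\to GL(M)$, so that $M$ is a faithful $\bar G$-lattice for $\bar G:=G/H$ and $k(M)^G=k(M)^{\bar G}$; every Sylow subgroup of $\bar G$ is cyclic (being the image of one of $G$), so by the faithful case applied to $\bar G$ it suffices to prove that $k(\bar G)$ is retract $k$-rational. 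Thus the whole theorem comes down to a descent statement: \emph{if $G$ has only cyclic Sylow subgroups and $k(G)$ is retract $k$-rational, then $k(G/H)$ is retract $k$-rational for every normal subgroup $H$.}

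To prove this descent statement I would use the metacyclic structure from Theorem \ref{t5.6}. Write $G=\langle\sigma,\tau\rangle$ with $\sigma^m=\tau^n=1$, $\tau\sigma\tau^{-1}=\sigma^r$ and $\gcd\{(r-1)n,m\}=1$; then $\gcd\{m,n\}=1$, and since $\langle\sigma\rangle$ is normal we get $G=\langle\sigma\rangle\rtimes\langle\tau\rangle\cong C_m\rtimes C_n$. From $k(G)$ retract $k$-rational and Theorem \ref{t3.5}(1) (with $N=\langle\sigma\rangle$, $G_0=\langle\tau\rangle$) one gets that $k(C_n)$ is retract $k$-rational, hence by Theorem \ref{t3.7} either $\fn{char}k=2$ or $k(\zeta_{2^v})$ is cyclic over $k$, where $2^v$ is the $2$-part of $n$. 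If moreover $m$ is even then $n$ is odd; writing $2^w$ for the $2$-part of $m$, the characteristic subgroup $C_{2^w}\le\langle\sigma\rangle$ is centralized by $\langle\sigma\rangle$ and, since $\fn{Aut}(C_{2^w})$ is a $2$-group while $|\langle\tau\rangle|=n$ is odd, also by $\langle\tau\rangle$, so $C_{2^w}$ is central; by Schur--Zassenhaus it has a complement $N$, giving $G\cong C_{2^w}\times N$, and then Lemma \ref{l3.4}(ii) shows $k(C_{2^w})$ is retract $k$-rational, so by Theorem \ref{t3.7} either $\fn{char}k=2$ or $k(\zeta_{2^w})$ is cyclic over $k$.

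Now given $H\trianglelefteq G$, set $\bar\sigma=\sigma H$, $\bar\tau=\tau H$, of orders $m_1\mid m$, $n_1\mid n$; since $\langle\bar\sigma\rangle$ is normal in $\bar G$, $\langle\bar\sigma\rangle\langle\bar\tau\rangle=\bar G$, and $\gcd\{m_1,n_1\}=1$ forces $\langle\bar\sigma\rangle\cap\langle\bar\tau\rangle=1$, we obtain $\bar G\cong C_{m_1}\rtimes C_{n_1}$. I would then apply Theorem \ref{t3.5}(2) (with $N=C_{m_1}$ abelian, $G_0=C_{n_1}$, $\gcd\{m_1,n_1\}=1$), for which it remains to check that $k(C_{n_1})$ and $k(C_{m_1})$ are retract $k$-rational. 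For $C_{n_1}$: when $\fn{char}k\neq2$ its $2$-part $2^{v_1}$ divides $2^v$, and $k(\zeta_{2^{v_1}})$ is an intermediate field of $k(\zeta_{2^v})/k$ that is itself Galois over $k$ with Galois group a quotient of the cyclic group $\fn{Gal}(k(\zeta_{2^v})/k)$, hence cyclic, so Theorem \ref{t3.7} applies. For $C_{m_1}$: if $m$ is odd then $m_1$ is odd and $k(C_{m_1})$ is retract $k$-rational automatically by Theorem \ref{t3.7}, while if $m$ is even one argues exactly as for $C_{n_1}$ using the cyclicity of $k(\zeta_{2^w})/k$ obtained above. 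This finishes the descent, hence the theorem. I expect this descent step to be the main obstacle: retract $k$-rationality of $k(G)$ is not known to pass to arbitrary quotients $k(G/H)$ for general $G$, so one genuinely has to exploit the metacyclic structure of Theorem \ref{t5.6} and, in particular, track the $2$-primary part carefully — the key structural point being that it either splits off as a central direct factor or is governed by the odd-order complement $\langle\tau\rangle$.
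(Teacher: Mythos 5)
Your proposal is correct, and its core coincides with the paper's: for a faithful $G$-lattice $M$ you invoke Theorem \ref{t2.5} (over a group whose Sylow subgroups are all cyclic, every flabby lattice is invertible) and then the implication ${\rm (i)}\Rightarrow{\rm (v)}$ of Theorem \ref{t5.4}, which is exactly the published argument. Where you go beyond the paper is the reduction to the faithful case. The paper disposes of it in one clause, ``by the same method as in the proof of Theorem \ref{t5.5}'', i.e.\ replace $G$ by $\bar G=G/H$ and check that $k(\bar G)$ is retract $k$-rational; in Theorem \ref{t5.5} that check is immediate from Theorem \ref{t3.7} because $\bar G$ is again cyclic, but for a metacyclic group satisfying Theorem \ref{t5.6} the verification is left implicit. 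You correctly identify this descent as the real content and supply it: from $\gcd\{(r-1)n,m\}=1$ you get $G=\langle\sigma\rangle\rtimes\langle\tau\rangle$ with coprime orders, Theorem \ref{t3.5}(1) gives retract rationality of $k(\langle\tau\rangle)$, the Sylow $2$-subgroup of $\langle\sigma\rangle$ splits off as a central direct factor (the odd-order $\tau$ acts trivially on it since its automorphism group is a $2$-group, and Schur--Zassenhaus provides the complement), so Lemma \ref{l3.4}(ii) and Theorem \ref{t3.7} also control the $2$-part of $m$; then $\bar G\cong C_{m_1}\rtimes C_{n_1}$ is reassembled via Theorem \ref{t3.5}(2), using that cyclicity of $k(\zeta_{2^v})/k$ and $k(\zeta_{2^w})/k$ passes to the Galois subextensions $k(\zeta_{2^{v_1}})/k$ and $k(\zeta_{2^{w_1}})/k$. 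All of these steps check out (normality and coprimality give the semidirect decomposition of $\bar G$, and quotients of $G$ again have cyclic Sylow subgroups), so your write-up buys a complete, self-contained proof of the quotient step that the paper only gestures at, at the cost of the extra structural bookkeeping on the $2$-primary part.
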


\begin{proof}
By the same method as in the proof of Theorem \ref{t5.5}, we may
assume that $M$ is faithful. Then apply Theorem \ref{t2.5} for
such a group $G$, and use Theorem \ref{t5.4}.
\end{proof}

Now we consider an application of Theorem \ref{t5.5}.

Recall two previous results about the rationality problem and unramified Brauer groups.

\begin{theorem}[Kang {\cite[Theorem 1.4]{Ka1}}] \label{t5.8}
Let $k$ be a field and $G$ be a finite group.
Assume that {\rm (i)} $G$ contains an abelian normal subgroup $H$ so that $G/H$ is cyclic of order $n$,
{\rm (ii)} $\bm{Z}[\zeta_n]$ is a unique factorization domain, and
{\rm (iii)} $\zeta_e\in k$ where $e$ is the exponent of $G$.
If $G\to GL(V)$ is any finite-dimensional linear representation of $G$ over $k$,
then $k(V)^G$ is rational over $k$.
In particular, $k(G)$ is $k$-rational.
\end{theorem}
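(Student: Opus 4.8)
The plan is to turn $k(V)^{G}$, in two reduction steps, into the fixed field of a (twisted) multiplicative action of the cyclic group $G/H$ on a lattice, and then to prove rationality of such a fixed field by exploiting the unique–factorization hypothesis (ii).

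First I would reduce to $V$ faithful — a non-faithful $V$ factors through a quotient of $G$ satisfying (i)--(iii) with a divisor of $n$ in place of $n$ — and diagonalize the $H$-action. Since $H$ is abelian with $\exp(H)\mid e$ and $\fn{char}k\nmid e$, the field $k$ contains all $\exp(H)$-th roots of unity, so $V=\bigoplus_i V_{\chi_i}$ splits over $k$ into one-dimensional $H$-eigenspaces and $k(V)=k(x_1,\ldots,x_m)$ with $h\cdot x_i=\chi_i(h)x_i$ for characters $\chi_i\colon H\to k^{\times}$. Fix $\sigma\in G$ mapping to a generator of $G/H\simeq C_n$. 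Conjugation by $\sigma$ permutes the eigenspaces, and on each $H$-isotypic component the operator $\sigma^{d}$ (where $d\mid n$ is the size of the relevant $\sigma$-orbit of characters) has order dividing $e$, hence is semisimple with eigenvalues in $\mu_e\subseteq k^{\times}$; choosing eigenbases of these operators splits $V$ into $\sigma$-stable ``monomial blocks'' on each of which $\sigma$ is a cyclic shift of the coordinates followed by multiplication of the last coordinate by a root of unity in $\mu_e$. Thus $G$ acts on $k(x_1,\ldots,x_m)$ by a monomial action with all coefficients in $\mu_e$; the underlying lattice $M=\bigoplus_i\bm{Z}x_i$ is a \emph{permutation} $G$-lattice (trivial on $H$, permuted by $\sigma$), say $M\simeq\bigoplus_s\bm{Z}[G/G_s]$ with $H\subseteq G_s$, and $k(V)^{G}=k_{\alpha}(M)^{G}$ for the resulting extension $\alpha\colon 0\to k^{\times}\to M_{\alpha}\to M\to 0$ whose class records the $\chi_i$ together with the wrap-around scalars.

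Next I would take $H$-invariants. As $H$ acts diagonally, $k(V)^{H}=k(M_{0})$ where $M_{0}=\ker(M\to\hat H)$ is a finite-index sublattice of the permutation $C_n$-lattice $M$, and $C_n=G/H$ then acts on $k(M_{0})$ by a purely-monomial action twisted by a cocycle $\beta$ with values in $\mu_e$ coming only from the wrap-around scalars. Using $\zeta_e\in k$, these roots-of-unity twists are absorbed into a few extra independent transcendentals (each block contributes its coordinate product $P$, with $\sigma P=\lambda P$, whose power $P^{\ell}$ is an invariant transcendental) together with a modification of the $C_n$-lattice, reducing the problem to showing that $k(M_{1})^{C_n}$ is $k$-rational for certain $C_n$-lattices $M_{1}$ of the same (sublattice-of-permutation) shape as $M_{0}$. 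Now condition (ii) enters decisively: $\bm{Z}[\zeta_n]$ is the ring of integers of $\bm{Q}(\zeta_n)$, so (ii) makes it a principal ideal domain, whence $\bm{Z}[\zeta_d]$ is principal for every $d\mid n$ as well; together with $\zeta_e\in k$ (which trivializes the cyclotomic extensions $k(\zeta_{2^{r}})/k$ that otherwise obstruct the $2$-primary part) this is exactly the condition under which every such $C_n$-lattice $M_{1}$ is, up to addition of permutation lattices, a direct sum of $\bm{Z}[\zeta_d]$'s and $k(M_{1})^{C_n}$ is $k$-rational. Chaining the resulting rational extensions through Theorem \ref{t5.1} (the no-name lemma, applied in stages to split off the permutation summands) yields that $k_{\alpha}(M)^{G}=k(V)^{G}$ is $k$-rational; taking $V$ to be the regular representation gives $k(G)$ is $k$-rational.

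The main obstacle is the integral representation theory in the last step: showing that, when $\bm{Z}[\zeta_d]$ is principal for all $d\mid n$, the lattices $M_{1}$ really do reduce to $\bm{Z}[\zeta_d]$-free pieces and the attendant changes of monomial coordinates can be performed over $k$ itself rather than over an extension. Without condition (ii) one still gets something weaker: $C_n$ has cyclic Sylow subgroups, so by Theorem \ref{t2.5} every flabby $C_n$-lattice is invertible, hence $[M_{0}]^{fl}$ is invertible; $k(C_n)$ is retract $k$-rational by Theorem \ref{t3.7} thanks to $\zeta_e\in k$; and Theorem \ref{t5.4} (with the transitivity Theorem \ref{t4.2}) then shows $k(V)^{G}$ is retract $k$-rational. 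Condition (ii) is precisely what upgrades ``retract $k$-rational'' to ``$k$-rational'' here. By comparison the diagonalization of $H$, the passage to monomial blocks for $\sigma$, the bookkeeping with the extension $\alpha$, and the absorption of the wrap-around scalars are all routine.
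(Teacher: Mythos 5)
Your two reduction steps (restrict to faithful $V$, diagonalize $H$ using $\zeta_e\in k$, organize the coordinates into $\sigma$-monomial blocks, pass to $H$-invariants to land in a monomial action of the cyclic quotient on a lattice, and trade the root-of-unity twists for invariant transcendentals) do follow essentially the same route as the cited proof: this is exactly the content of Steps 4--5 of the proof of \cite[Theorem 1.4]{Ka1}, as recalled (with formula (1)) in Step 1 of the proof of Theorem \ref{t5.10}, which produces $k(V)^G=k(M)^{\langle\tau\rangle}(Y_1,\ldots,Y_r)$ with $\tau$ acting purely monomially on $k(M)$.

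The genuine gap is the last step, the only place where hypothesis (ii) enters, and you have asserted it rather than proved it. The claim that ``every such $C_n$-lattice $M_1$ is, up to addition of permutation lattices, a direct sum of $\bm{Z}[\zeta_d]$'s and $k(M_1)^{C_n}$ is $k$-rational'' is not a routine fact: integral representations of cyclic groups of non-prime-power order are wild, and what is actually proved and used in \cite{Ka1} is a statement about the specific lattice $M$ arising from the construction, namely \cite[Theorem 2.2]{Ka1}, which (using unique factorization in $\bm{Z}[\zeta_n]$) produces a $\pi$-equivariant isomorphism of \emph{fields} $k(M)\simeq k(L)$ with $L$ of a shape whose invariant field can be computed. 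Your formulation would not suffice even if the structural claim held: an identification of $M_1$ with a sum of $\bm{Z}[\zeta_d]$'s only ``up to permutation summands'' is a stable equivalence, and chaining it through Theorem \ref{t5.1} yields at best stable rationality of $k(M_1)^{C_n}$ relative to $k(L)^{C_n}$, not rationality; and the rationality of $k(L)^{C_n}$ itself for $L$ a sum of $\bm{Z}[\zeta_d]$'s still requires an argument. Since the theorem claims rationality --- while, as you correctly observe, retract rationality already follows without (ii) (that is Theorem \ref{t5.10}) --- the step you defer is precisely the heart of the theorem, so the proposal as it stands does not prove the statement.
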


\begin{theorem}[Bogomolov {\cite[Lemma 4.9]{Bo}}] \label{t5.9}
Let $G$ be a finite group containing an abelian normal subgroup
$H$ such that $G/H$ is cyclic. Then
$\fn{Br}_{v,\bm{C}}(\bm{C}(G))=0$.
\end{theorem}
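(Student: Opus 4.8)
The plan is to obtain Theorem~\ref{t5.9} as a corollary of the stronger assertion that $\bm{C}(V)^G$ is retract $\bm{C}$-rational for \emph{every} complex representation $G\to GL(V)$ of a group $G$ with an abelian normal subgroup $H$ and $G/H$ cyclic (this is Theorem~\ref{t5.10}). Indeed, taking $V$ to be the regular representation gives $\bm{C}(V)^G=\bm{C}(G)$, so $\bm{C}(G)$ is retract $\bm{C}$-rational, and since $\bm{C}$ is algebraically closed, Theorem~\ref{t3.2}(i) yields $\fn{Br}_{v,\bm{C}}(\bm{C}(G))=0$. Thus the real work is the proof of the retract $\bm{C}$-rationality of $\bm{C}(V)^G$, and the rest of this sketch concerns it.

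First I would run a no-name reduction to a monomial action of the cyclic quotient $\bar G:=G/H$. Since $H$ is abelian and $\bm{C}$ is algebraically closed, the action of $H$ on $\bm{C}(V)$ is diagonalizable, and after using Theorem~\ref{t5.1} to straighten out the non-homogeneous part of the $G$-action one may write $\bm{C}(V)^H=\bm{C}(M)$, where $M$ is a lattice built from the $H$-characters occurring in $V$, carrying a monomial action of $\bar G$. Hence $\bm{C}(V)^G=\bm{C}(M)^{\bar G}=\bm{C}_{\alpha}(\Lambda)^{\bar G}$ for a $\bar G$-lattice $\Lambda$ and an associated extension $\alpha:0\to\bm{C}^{\times}\to M_{\alpha}\to\Lambda\to 0$. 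Adjoining copies of the regular representation of $\bar G$ and applying Theorem~\ref{t5.1} twice, exactly as in the proof of Theorem~\ref{t5.4}, one arranges that $\Lambda$ contains a faithful permutation summand while changing the field only by adjoining transcendentals, which by Lemma~\ref{l3.4}(i) does not affect retract $\bm{C}$-rationality.

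Next one exploits that $\bar G$ is cyclic: by Theorem~\ref{t2.5} every flabby $\bar G$-lattice is invertible, so in particular $[\Lambda]^{fl}$ is invertible. In the untwisted case ($\alpha$ split, i.e.\ a purely monomial action) this already finishes the argument: $\bm{C}(\bar G)$ is retract $\bm{C}$-rational by Theorem~\ref{t5.5} (over $\bm{C}$ the cyclotomic condition is vacuous), and then Theorem~\ref{t5.4}, in the form ``(i)$\Rightarrow$(v)'', together with the transitivity Theorem~\ref{t4.2}, gives that $\bm{C}(\Lambda)^{\bar G}$ is retract $\bm{C}$-rational. The remaining ingredient is the twist $\alpha$: one needs the analogue of the criterion of Theorem~\ref{t2.8}/Theorem~\ref{t5.4} for the twisted fixed field $\bm{C}_{\alpha}(\Lambda)^{\bar G}$. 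Here I would split off from $\alpha$ the portion carried by an invertible (e.g.\ permutation) direct summand of $\Lambda$, using Lemma~\ref{l2.4} on the relevant short exact sequence, and then use Theorem~\ref{t5.3}(ii) together with Theorem~\ref{t4.2} to descend from $\bm{C}_{\alpha}(\Lambda)^{\bar G}$ to a field already known to be retract $\bm{C}$-rational.

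The step I expect to be the main obstacle is precisely this last one: keeping track of the extension class $\alpha$ through the no-name reductions and the splitting lemmas, and showing that over $\bm{C}$ it does not obstruct the invertibility of the relevant flabby class --- i.e.\ handling $\bm{C}_{\alpha}(\Lambda)^{\bar G}$ rather than the easier untwisted $\bm{C}(\Lambda)^{\bar G}$. For $\bar G$ cyclic this is feasible because cyclic groups have periodic cohomology and, by Theorem~\ref{t2.5}, all flabby $\bar G$-lattices are invertible; but the bookkeeping is delicate, and it is exactly the mechanism that the Barge-type results of Section~6 (Theorem~\ref{t6.6}) are built to handle in the generality of arbitrary quotient groups and arbitrary $\alpha$.
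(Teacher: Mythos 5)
Your overall framing coincides with the paper's: Theorem \ref{t5.9} is obtained by combining the retract $\bm{C}$-rationality of $\bm{C}(G)$ (Theorem \ref{t5.10}, with $V$ the regular representation) with Theorem \ref{t3.2}(i). The divergence, and the genuine gap, is in your sketch of Theorem \ref{t5.10} itself. You reduce $\bm{C}(V)^G$ to a \emph{twisted} multiplicative invariant field $\bm{C}_{\alpha}(\Lambda)^{\bar G}$ and then concede that handling the extension class $\alpha$ is ``the main obstacle,'' offering only the suggestion of splitting off the twist via Lemma \ref{l2.4} and Theorem \ref{t5.3}(ii). That suggestion does not work as stated: Lemma \ref{l2.4} is a statement about $\pi$-lattices, not about extensions by $\bm{C}^{\times}$ (which are classified by $H^1(\bar G,\fn{Hom}(\Lambda,\bm{C}^{\times}))$ and need not split), and Theorem \ref{t5.3}(ii) concerns untwisted purely monomial actions. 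So the key step of your argument is left unproved; making it rigorous would require the Section 6 machinery (Theorem \ref{t6.2}, Lemmas \ref{l6.4}--\ref{l6.5}, plus the vanishing $H^2(G',\bm{C}^{\times})=0$ used in Theorem \ref{t6.6}), which you invoke only as a pointer and do not carry out, and which in any case the paper deliberately does not need for Theorem \ref{t5.10}.

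The paper's proof avoids the twist altogether, and that is its essential content. Step 1 revisits the proof of Theorem \ref{t5.8} in \cite{Ka1}: with $\zeta_{e'}\in k$ (automatic over $\bm{C}$) one can choose the variables $z(i,j)$ so that, after taking $H$-invariants, $k(V)^G=k(M)^{\langle\tau\rangle}(Y_1,\ldots,Y_r)$ where the residual action of the cyclic quotient $\langle\bar\tau\rangle$ on $k(M)$ is \emph{purely} monomial (Formula \eqref{eq1}); no extension $\alpha$ appears. Then Step 2 applies Fischer's theorem, Theorem \ref{t5.5} (which rests on Theorem \ref{t2.5}, Theorem \ref{t5.4} and the transitivity Theorem \ref{t4.2}) and Lemma \ref{l3.4}(i). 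Your untwisted branch of the argument matches this, but without the explicit change of variables from \cite{Ka1} you cannot assert that the twist is absent, and with the twist present your proof is incomplete.
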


What we will prove next is that, with the same assumptions as in
Theorem \ref{t5.9}, $\bm{C}(G)$ is retract $\bm{C}$-rational.
Hence it is not surprising that $\fn{Br}_{V,\bm{C}}(\bm{C}(G))=0$
in this situation.

\begin{theorem} \label{t5.10}
Let $k$ be an infinite field and $G$ be a finite group. Assume
that {\rm (i)} $G$ contains an abelian normal subgroup $H$ so that
$G/H$ is cyclic, and {\rm (ii)} $\zeta_{e'} \in k$ with $e'=lcm \{
exp \, (H), ord (\tau) \}$ where $\tau$ is some element in $G$ and
the image of $\tau$ in $G/H$ generates the cyclic group $G/H$. If
$G \rightarrow GL(V)$ is any linear representation of $G$ on the
$k$-vector space $V$, then $k(V)^G$ is retract $k$-rational. In
particular, $k(G)$ is retract $k$-rational.
\end{theorem}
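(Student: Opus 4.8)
The plan is to reduce to the situation where $k(M)^G$ can be analyzed via the flabby-class criterion (Theorem~\ref{t2.8}) together with the transitivity theorem (Theorem~\ref{t4.2}). First I would pass from an arbitrary linear representation $V$ to a monomial problem: since $H$ is abelian normal with $G/H$ cyclic and $\zeta_{e'}\in k$, standard diagonalization (Clifford theory over the abelian normal subgroup $H$, which has all its characters defined over $k$ because $\zeta_{\exp(H)}\in k$) lets one write $k(V)^G$, up to adjoining finitely many fixed variables and invoking Lemma~\ref{l3.4}, as $k_\alpha(M)^G$ for a suitable $G$-lattice $M$ carrying a monomial action. In fact, because $\tau$ acts with eigenvalues that are roots of unity already in $k$, the cocycle $\alpha$ can be trivialized and the action made purely monomial: one chooses eigencoordinates for $H$, and then the $G/H$-action permutes the eigenspaces up to scalars in $k$, which one absorbs. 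So the first main step is: it suffices to prove $k(M)^G$ is retract $k$-rational for the specific $G$-lattice $M$ arising this way (a monomial/permutation-type lattice built from the induced character).

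The second step is to identify $[M]^{fl}$ as invertible for this $M$. The group $G$ here need not have all Sylow subgroups cyclic, so Theorem~\ref{t2.5} does not apply directly to arbitrary $G$-lattices; the point is that the particular lattice $M$ coming from the representation is special. Here I would use the structure $G = H\rtimes' \langle\tau\rangle$-ish presentation implicitly and the fact that, after the reduction, $M$ sits in an exact sequence $0\to M\to P\to E\to 0$ with $P$ permutation and $E$ invertible — i.e. $M$ itself, or a lattice similar to it, is already close to invertible because the $G/H$-action on the set of $H$-eigencharacters is, by the cyclic structure, a transitive-ish permutation twisted in a controlled way. Concretely I expect $M$ to be (stably) a direct summand of an induced permutation lattice $\mathrm{Ind}_{H'}^G \bm{Z}$ for an appropriate subgroup, hence invertible. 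This is the heart of the matter.

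The third step is then routine: invoke Theorem~\ref{t5.4} (or directly Theorem~\ref{t2.8} over the Galois closure of the relevant scalars together with Theorem~\ref{t4.2} and Lemma~\ref{l3.4}) to conclude $k(M)^G$ — and hence $k(V)^G$ — is retract $k$-rational, provided $k(G)$ is retract $k$-rational; and $k(G)$ being retract $k$-rational in this case follows because $G$ with $G/H$ cyclic over a field containing the needed roots of unity falls under results already cited (e.g. the reduction via Theorem~\ref{t5.4}/\ref{t5.5} applied to the cyclic quotient, combined with Theorem~\ref{t3.5}(2) after splitting off the part of $H$ of order prime to $|G/H|$). The final sentence ``in particular $k(G)$ is retract $k$-rational'' then comes for free by taking $V$ to be the regular representation.

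The main obstacle I anticipate is the second step: showing that the lattice $M$ attached to the representation has invertible flabby class without the luxury of all Sylow subgroups of $G$ being cyclic. One must exploit that $M$ is not an arbitrary $G$-lattice but one assembled from one-dimensional $H$-characters permuted-up-to-scalars by the \emph{cyclic} group $G/H$; the cyclicity of $G/H$ is exactly what should force the relevant flabby resolution to split. Making this precise — likely by writing $M$ as an explicit extension of permutation lattices for $G/H$ induced up from $H$, and checking coflabbiness of the kernel together with Lemma~\ref{l2.4} — is where the real work lies, and it is plausible the argument is organized so that $\zeta_{e'}\in k$ is used precisely to kill the scalar cocycle $\alpha$ so that one is reduced to a \emph{purely} monomial (lattice) problem, after which the cyclic quotient hypothesis does the rest.
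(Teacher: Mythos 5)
Your reduction in the first step (diagonalize the $H$-action using $\zeta_{e'}\in k$, absorb scalars, pass to a monomial/lattice problem) is in the right spirit, but the way you set it up leaves the decisive step unproved, and you say so yourself. You keep $M$ as a lattice over the \emph{full} group $G$ and then need $[M]^{fl}$ to be invertible as a $G$-lattice; since the Sylow subgroups of $G$ need not be cyclic, Theorem \ref{t2.5} is unavailable, and your guess that $M$ is stably a summand of an induced permutation lattice is exactly the ``heart of the matter'' you do not establish. Moreover, your fallback for the input ``$k(G)$ is retract $k$-rational'' (needed to invoke Theorem \ref{t5.4}) via Theorem \ref{t3.5}(2) requires $H$ abelian with $\gcd\{|H|,|G/H|\}=1$ and a semidirect product structure, neither of which is part of the hypotheses; as stated this is circular, since the retract rationality of $k(G)$ is precisely the ``in particular'' conclusion of the theorem.

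The paper's proof avoids both problems by taking $H$-invariants \emph{first}, so that the lattice that appears is a lattice over the cyclic quotient, not over $G$. Following \cite[Ka1]{Ka1}, one finds coordinates $z(i,j)$ on which $H$ acts by scalar multiplication by roots of unity in $k$ and $\tau$ acts monomially (formula \eqref{eq1}); hence $k(V)^G=k(z(i,j))^G(Y_1,\ldots,Y_r)$ and $k(z(i,j))^H=k(M)$ where $M$ is a $\langle\bar\tau\rangle$-lattice on which $\tau$ acts by purely monomial automorphisms, so $k(V)^G=k(M)^{\langle\tau\rangle}(Y_1,\ldots,Y_r)$. Now $\pi=G/H$ is cyclic, so Endo--Miyata (Theorem \ref{t2.5}) applies to \emph{every} $\pi$-lattice: no structural analysis of $M$ is needed. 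Since $k(G/H)$ is $k$-rational by Fischer's theorem (the relevant roots of unity lie in $k$), Theorem \ref{t5.5} gives that $k(M)^{\langle\tau\rangle}$ is retract $k$-rational, and Lemma \ref{l3.4} finishes. If you want to repair your write-up, the missing idea is precisely this change of group: eliminate $H$ by passing to its fixed field (possible because $H$ is abelian, normal, and split by $k$), and run the flabby-class argument over the cyclic quotient rather than over $G$.
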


\begin{proof}
Step 1. We will go over the proof of Theorem \ref{t5.8} in the
paper \cite{Ka1}. By \cite[Corollary 3.2]{Ka1}, the proof of
Theorem \ref{t5.8} is valid under the weaker assumption on
$\zeta_{e'}$. We will show that
$k(V)^G=k(M)^{\pi}(Y_1,\ldots,Y_r)$ where
$\pi=G/H=\langle\bar{\tau}\rangle$ and $M$ is a $\pi$-lattice.

Note that the assumption that $\bm{Z}[\zeta_n]$ is a unique
factorization domain is used in the proof of \cite[Theorem
2.2]{Ka1}. This theorem asserts that $k(M)$ is $\pi$-isomorphic to
$k(L)$, a fact which appears only in Step 5 of the proof of
\cite[Theorem 1.4, line 7 from the bottom on page 1218]{Ka1}.

On the other hand, in Step 4 of the proof of \cite[Theorem
1.4]{Ka1}, it is known that $k(V)^G=k(y(i,j):1\le i\le r,~ 1\le
j\le d_i-1)^G (Y_1,\ldots,Y_r)=k(z(i,j):1\le i\le r$, $1\le j\le
d_i-1)^G(Y_1,\ldots,Y_r)$ where $G=\langle H,\tau \rangle$ acts on
these $z(i,j)$ by
\begin{equation}
\begin{aligned}
\tau:~& z(i,1)\mapsto z(i,2)\mapsto \cdots\mapsto z(i,d_i-1)\mapsto
\left(\prod_{1\le j\le d_i-1}z(i,j)\right)^{-1}, \\
\sigma:~& z(i,j)\mapsto \Psi_i(\tau^{-(j-1)}\sigma\tau^{j-1}) z(i,j)
\end{aligned} \label{eq1}
\end{equation}
where $\sigma \in H$ and $1\le j\le d_i-1$.

The first two paragraphs of Step 5 of the proof of \cite[Theorem 1.4]{Ka1} shows that
$k(z(i,j):1\le i\le r,~1\le j\le d_i-1)^H=k(M)$.
Hence $k(V)^G=k(M)^{\langle \tau\rangle}(Y_1,\ldots,Y_r)$.
From Formula \eqref{eq1},
it is clear that $\tau$ acts on $k(M)$ by purely monomial $k$-automorphisms.

\bigskip
Step 2. By Fischer's Theorem \cite[Theorem 6.1; KP, Corollary
1.5]{Sw1}, $k(G/H)$ is $k$-rational; thus it is retract
$k$-rational. Applying Theorem \ref{t5.5}, we find that
$k(M)^{\langle \tau\rangle}$ is retract $k$-rational. By Lemma
\ref{l3.4}, $k(V)^G$ is retract $k$-rational.

In particular, take a $k$-vector space $V$ whose dual space is
equal to $\bigoplus_{g\in G} k\cdot x(g)$, the regular
representation of $G$. We find that $k(G)=k(V)^G$ is retract
$k$-rational.
\end{proof}

\begin{remark}
Compare Theorem \ref{t5.10} with Proposition 5.2 in \cite{Ka2}
(and also Theorem 1.11, Theorem 1.12 and Corollary 5.1 there).
There the assumption $\zeta_{e'}\in k$ is waived, while other
assumptions, e.g.\ the group extension $1\to H\to G\to C_n\to 1$
splits and the structures of some Galois extensions over $k$, are
required.
\end{remark}

%-------------------------------------------s6
\section{Monomial actions}

Recall the definition of the fixed field $k_{\alpha}(M)^G$ of a
monomial action of $G$ (see Definition \ref{d2.2}). Throughout
this section, $G$ acts trivially on $k$. We will generalize the
following theorem of Barge.

\begin{theorem}[Barge {\cite[Theorem IV-1]{Ba}}] \label{t6.1}
Let $G$ be a finite group. The following two statements are
equivalent,

{\rm (i)} All the Sylow subgroups of $G$ are cyclic;

{\rm (ii)} $Br_{v,\bm{C}}(\bm{C}_{\alpha}(M)^G)=0$ for all
$G$-lattices $M$, for all short exact sequences of
$\bm{Z}[G]$-modules $\alpha : 0 \rightarrow \bm{C}^{\times}
\rightarrow M_{\alpha} \rightarrow M \rightarrow 0$.
\end{theorem}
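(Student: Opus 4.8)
The plan is to prove the two implications separately, using the Endo--Miyata criterion (Theorem \ref{t2.5}) as the common backbone: condition (i) says precisely that \emph{every} flabby $G$-lattice is invertible. I would route (i)$\Rightarrow$(ii) through retract rationality together with Theorem \ref{t3.2}(i), and prove (ii)$\Rightarrow$(i) by contraposition, manufacturing a monomial field with a nonzero unramified Brauer class out of a non-invertible flabby lattice.

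To prove (i)$\Rightarrow$(ii), I would first reduce the twist $\alpha$ to one by roots of unity. Since $G$ acts trivially on $\bm{C}$, write $\bm{C}^{\times}\cong\mu\oplus V$ with $\mu$ the roots of unity and $V$ a $\bm{Q}$-vector space; because $M$ is $\bm{Z}$-free one has $\fn{Ext}^1_{\bm{Z}[G]}(M,V)\cong H^1(G,\fn{Hom}_{\bm{Z}}(M,V))=0$, the last group vanishing as $\fn{Hom}_{\bm{Z}}(M,V)$ is a $\bm{Q}$-vector space and $G$ is finite. Hence the $V$-part of $\alpha$ splits, and after a monomial change of variables $x_j\mapsto\lambda_jx_j$ (altering each multiplier $c_j(\sigma)$ only by a coboundary) I may assume every $c_j(\sigma)$ is a root of unity of order dividing some $N$. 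Next I would resolve this $\mu_N$-twist into a purely monomial action: attach a $G$-invariant variable $w$ and realize the twisted action as a purely monomial action on a lattice $\widehat{M}$ fitting into $0\to\bm{Z}w\to\widehat{M}\to M\to 0$ (after possibly enlarging $M$ by a permutation lattice so that the cocycle lifts integrally, which is harmless for retract rationality by Theorem \ref{t5.1} and Lemma \ref{l3.4}), with $\bm{C}_\alpha(M)^G$ recovered from $\bm{C}(\widehat{M})^G$ by the specialization $w\mapsto\zeta_N$. By Theorem \ref{t5.6} a group with cyclic Sylow subgroups is metacyclic $\langle\sigma,\tau\rangle$, so Theorem \ref{t5.10} (its hypothesis $\zeta_{e'}\in\bm{C}$ being automatic over $\bm{C}$) gives that $\bm{C}(G)$ is retract $\bm{C}$-rational, whence Theorem \ref{t5.7} gives that $\bm{C}(\widehat{M})^G$ is retract $\bm{C}$-rational. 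Transferring retract rationality across $w\mapsto\zeta_N$ directly on the retraction $\psi\circ\varphi=1_A$ of Definition \ref{d1.1} (which survives specialization because $\bm{C}$ is algebraically closed) then shows $\bm{C}_\alpha(M)^G$ is retract $\bm{C}$-rational, and Theorem \ref{t3.2}(i) yields $\fn{Br}_{v,\bm{C}}(\bm{C}_\alpha(M)^G)=0$.

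For (ii)$\Rightarrow$(i) I would argue by contraposition. If some Sylow subgroup of $G$ is non-cyclic, Theorem \ref{t2.5} supplies a flabby $G$-lattice $F$ that is not invertible, and from the data of $F$ I would build a lattice $M$ and an extension $\alpha$ so that the non-invertibility is detected by a nonzero class in $\fn{Br}_{v,\bm{C}}(\bm{C}_\alpha(M)^G)$. A nontrivial twist $\alpha$ (and a nontrivial $M$) is genuinely needed: already for $G=C_p\times C_p$ the untwisted field $\bm{C}(G)$ is rational by Fischer's theorem, hence has vanishing unramified Brauer group, so no purely monomial example detects non-cyclicity. The construction should follow the pattern of Bogomolov's and Saltman's computations of the unramified Brauer group of a monomial field through the second cohomology of $G$ and its subgroups, the surviving class arising from the non-splitting recorded by $F$.

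The hard part will be the construction in (ii)$\Rightarrow$(i): exhibiting the explicit pair $(M,\alpha)$ and proving that the associated unramified Brauer group is nonzero. This requires a cohomological description of $\fn{Br}_{v,\bm{C}}(\bm{C}_\alpha(M)^G)$ in terms of relative $H^2$ data, together with a verification that the flabby defect measured by $F$ produces a class lying in $\bigcap_R\fn{Br}(R)$ over every discrete valuation ring $R$ with quotient field $\bm{C}_\alpha(M)^G$ --- that is, a class that is genuinely unramified rather than merely nonzero in $\fn{Br}$. A secondary, more routine point is the integral resolution of the roots-of-unity cocycle and the specialization transfer $w\mapsto\zeta_N$ used in (i)$\Rightarrow$(ii), which I would have to check carefully against Definition \ref{d1.1}.
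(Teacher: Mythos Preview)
The paper does not itself prove Theorem \ref{t6.1}; it is quoted from Barge and then used as a black box in the proof of Theorem \ref{t6.6} (specifically the direction (iii)$\Rightarrow$(i) there, which is your (ii)$\Rightarrow$(i)). What the paper \emph{does} supply is an independent route to (i)$\Rightarrow$(ii): Theorem \ref{t6.6} shows that under (i) every $\bm{C}_\alpha(M)^G$ is retract $\bm{C}$-rational, and Theorem \ref{t3.2}(i) then gives $\fn{Br}_{v,\bm{C}}=0$. So your overall strategy for (i)$\Rightarrow$(ii) --- pass through retract rationality --- matches the paper's, but your execution diverges and carries a real gap.

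The paper handles the twist $\alpha$ not by reducing to roots of unity and specializing a parameter, but through Saltman's $H^1$-trivial embedding (Theorem \ref{t6.2}): embed $M_\alpha\subset N_\beta$ with $N_\beta/M_\alpha$ permutation and $N_\beta$ $H^1$-trivial, and then observe (proof of Theorem \ref{t6.6}) that $H^2(G',\bm{C}^{\times})=0$ for every subgroup $G'\subset G$ because the Sylow subgroups of $G'$ are cyclic and $\bm{C}^{\times}$ is divisible. This forces $N$ to be $H^1$-trivial, hence invertible, and Theorem \ref{t6.3} (via Lemma \ref{l6.4} and Lemma \ref{l6.5}) finishes. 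Your route instead introduces a $G$-fixed variable $w$, promotes the $\mu_N$-twist to a purely monomial action on some $\widehat{M}$, and then specializes $w\mapsto\zeta_N$. The specialization step is the problem: retract rationality does not in general descend to residue fields. Concretely, if $\psi\circ\varphi=1_A$ with $\varphi:A\to\bm{C}[X_1,\ldots,X_n][1/f]$, then reducing modulo $w-\zeta_N$ on the $A$-side forces you to reduce modulo $\varphi(w)-\zeta_N$ on the polynomial side, and $\bm{C}[X_1,\ldots,X_n][1/f]/(\varphi(w)-\zeta_N)$ has no reason to be a localized polynomial ring (or even a domain); algebraic closedness of $\bm{C}$ does not help. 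A secondary issue is the lifting of the $\mu_N$-cocycle to an integral one: the obstruction to lifting a class in $\fn{Ext}^1_{\bm{Z}[G]}(M,\mu_N)$ along $\bm{Z}\twoheadrightarrow\mu_N$ lies in $\fn{Ext}^2_{\bm{Z}[G]}(M,\bm{Z})$, and replacing $M$ by $M\oplus P$ with $P$ permutation (and $\alpha$ extended trivially on $P$) leaves that obstruction unchanged.

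For (ii)$\Rightarrow$(i) you are in the same position as the paper: the construction of a nonzero unramified class from a non-invertible flabby lattice is Barge's, and neither you nor the paper reproduces it. Your identification of this as the hard direction, and of the necessity of a genuine twist $\alpha$ (the untwisted $\bm{C}(G)$ being rational for abelian $G$ by Fischer's theorem), is correct.
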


First we recall an $H^1$ trivial embedding theorem due to Saltman.

\begin{theorem}[Saltman {\cite[Proposition 2]{Sa7}}] \label{t6.2}
Let $G$ be a finite group, $M$ be a $G$-lattice. If $\alpha:0\to
k^{\times}\to M_\alpha\to M\to 0$ is an exact sequence of
$\bm{Z}[G]$-modules, then there is an exact sequence $\beta:0\to
k^{\times}\to N_\beta\to N\to 0$ satisfying that {\rm (i)} $N$ is
a $G$-lattice, {\rm (ii)} $M_\alpha \subset N_\beta$, {\rm (iii)}
$N_\beta$ is $H^1$ trivial, i.e.\ $H^1(G',N_\beta)=0$ for any
subgroup $G'\subset G$, and {\rm (iv)} $N_\beta/M_\alpha$ is a
permutation $G$-lattice.
\end{theorem}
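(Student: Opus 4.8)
The final statement to prove is Theorem \ref{t6.2} (Saltman's $H^1$-trivial embedding theorem): given an extension $\alpha:0\to k^{\times}\to M_\alpha\to M\to 0$ of $\bm{Z}[G]$-modules with $M$ a $G$-lattice, one constructs $\beta:0\to k^{\times}\to N_\beta\to N\to 0$ with $N$ a $G$-lattice, $M_\alpha\subset N_\beta$, $N_\beta$ being $H^1$-trivial, and $N_\beta/M_\alpha$ a permutation lattice.

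\medskip

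The plan is to push out $\alpha$ along a permutation cover of $M$ and then correct the top term cohomologically. First I would choose a surjection of $G$-lattices $P\twoheadrightarrow M$ with $P$ permutation; for instance take $P=\bigoplus_i \bm{Z}[G/G_i]$ mapping onto a generating set of $M$ as a $G$-lattice. Pulling $\alpha$ back along $P\to M$ gives an extension $0\to k^{\times}\to E\to P\to 0$ with a natural map $E\to M_\alpha$; since $P$ is permutation, $\fn{Ext}^1_{\bm{Z}[G]}(P,k^{\times})$ decomposes via Shapiro's lemma into a sum of $H^2(G_i,k^{\times})$-type groups, but more to the point, the key reduction is that for a permutation lattice $P$ the pullback extension can be spliced back so that $N_\beta/M_\alpha\cong$ (kernel of $P\to M$) is again relevant — actually the cleaner route is to take $N = M\oplus P'$ or to work with the Co-induced construction. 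Let me instead describe the route I expect Saltman uses: embed $M$ into a permutation lattice $Q$ via Theorem \ref{t2.6}'s companion (there is always $0\to M\to Q\to Q/M\to 0$ with $Q$ permutation by taking $Q=\bigoplus \bm{Z}[G/H_i]$ dual to a set of orbit generators of $\fn{Hom}(M,\bm{Z})$ — equivalently $M\hookrightarrow \bigoplus\bm{Z}[G]$). Push $\alpha$ out along $M\hookrightarrow Q$: form $N_\beta = (M_\alpha\oplus Q)/M$ where $M$ is embedded antidiagonally as $\{(m,-m)\}$. Then we get $\beta:0\to k^{\times}\to N_\beta\to Q\to 0$ with $N=Q$ a permutation lattice, $M_\alpha\subset N_\beta$ via $m_\alpha\mapsto (m_\alpha,0)$, and $N_\beta/M_\alpha\cong Q/M$, which is a $G$-lattice — but not necessarily permutation, so a second step is needed.

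\medskip

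The second step handles $H^1$-triviality and the permutation-quotient requirement simultaneously. Having arranged $N=Q$ permutation, I would compute $H^1(G',N_\beta)$ from the long exact sequence of $\beta$: $H^1(G',k^{\times})\to H^1(G',N_\beta)\to H^1(G',Q)\to H^2(G',k^{\times})$. Since $Q$ is permutation, $H^1(G',Q)$ is a direct sum of $H^1$ of subgroups with coefficients in $\bm{Z}$, which vanishes; and if we can also kill the relevant $H^1(G',k^{\times})$ contribution — which we cannot do directly, so instead we must enlarge. The trick (this is the heart of Saltman's argument) is to further embed $N_\beta$ into $N_\beta\times_{?}(\text{something})$: replace $N_\beta$ by its pushout/pullback along a surjection from a lattice that is $H^1$-trivial with the right coaugmentation, e.g.\ use that $k^{\times}$ can be embedded so that the extension becomes induced from the trivial subgroup on a cofinal piece, making the connecting maps surjective. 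Concretely I would take a permutation $G$-lattice $P_0$ surjecting onto $N$ (possible, $N=Q$ is already permutation so take $P_0=Q$ identically, or take a larger $\bm{Z}[G]$ mapping onto the $k^{\times}$-torsion behaviour) and then set $N_\beta' = $ the fibered construction that makes $H^1(G',N_\beta')=0$: the point is that adding copies of $\bm{Z}[G]$ to the quotient $N$ (so $N\rightsquigarrow N\oplus\bm{Z}[G]^s$) and correspondingly enlarging $N_\beta$ by a permutation lattice changes nothing about conditions (i),(ii),(iv) while allowing the connecting homomorphism $\delta:H^1(G',N)\to H^2(G',k^{\times})$ to be replaced by a zero map and $H^1(G',k^{\times})\to H^1(G',N_\beta)$ to be killed, because one can absorb cocycles.

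\medskip

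The main obstacle is exactly this last cohomological correction: ensuring that after the enlargement the module $N_\beta$ genuinely has $H^1(G',N_\beta)=0$ for \emph{every} subgroup $G'$, while keeping $N_\beta/M_\alpha$ \emph{permutation} (not merely a lattice) and keeping $M_\alpha$ embedded. These three demands pull against each other, and the resolution requires a careful choice of the intermediate permutation lattice together with a dimension-shifting argument: one typically embeds $k^{\times}$-coefficient cocycles into coboundaries by enlarging $N_\beta$ by $\bigoplus_{G'}\fn{Ind}_{G'}^G(\text{trivial})$-type summands indexed over all subgroups, using that $H^1(G',\bm{Z}[G'])=0$ and that coinduced modules are cohomologically trivial in the relevant range, so that every class in $H^1(G',N_\beta)$ dies after the embedding while the quotient by $M_\alpha$ only grows by permutation pieces. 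I would set this up as a two-stage pushout: first make $N$ permutation (done above), then make $N_\beta$ $H^1$-trivial by a second pushout along an embedding of $k^{\times}$ into a cohomologically trivial $G$-module sitting inside a permutation lattice, after which a diagram chase gives all four properties. Verifying that these pushouts commute and that (iv) survives is the routine-but-delicate part I would spell out in full in the actual proof.
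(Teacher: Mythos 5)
The paper itself does not prove Theorem \ref{t6.2}; it quotes it from Saltman [Sa7, Proposition 2], so your sketch must stand on its own, and as written it has a genuine gap in two places. First, your opening move --- pushing $\alpha$ out along an embedding $M\hookrightarrow Q$ with $Q$ permutation so as to make $N$ permutation --- is not needed (condition (i) only asks that $N$ be a lattice) and it actively damages condition (iv): it gives $N_\beta/M_\alpha\cong Q/M$, which is in general not a permutation lattice, and your later enlargements, which by your own description only add permutation pieces to this quotient, cannot turn a non-permutation quotient into a permutation one. Second, and more seriously, the mechanism you invoke for $H^1$-triviality is not a mechanism: enlarging $N_\beta$ by permutation or coinduced \emph{direct summands} changes nothing, since $H^1(G',N_\beta\oplus Q')\simeq H^1(G',N_\beta)$ when $H^1(G',Q')=0$. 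Saying that the connecting maps ``can be replaced by zero'' and that one ``absorbs cocycles'' asserts the conclusion without supplying the construction.

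The missing idea is to kill the finitely many offending classes by a carefully chosen \emph{non-split} extension, via Shapiro's lemma. Each $H^1(G',M_\alpha)$ is finite (it sits between the finite groups $H^1(G',k^{\times})=\fn{Hom}(G',k^{\times})$ and $H^1(G',M)$, with $G$ acting trivially on $k$); for every subgroup $H\subset G$ choose generators $c\in H^1(H,M_\alpha)\simeq \fn{Ext}^1_{\bm{Z}[G]}(\bm{Z}[G/H],M_\alpha)$, and let $0\to M_\alpha\to N_\beta\to \bigoplus_i\bm{Z}[G/H_i]\to 0$ be the extension whose class is the chosen tuple. By construction each generator of $H^1(H_i,M_\alpha)$ maps to zero in $H^1(H_i,N_\beta)$, so the map $H^1(G',M_\alpha)\to H^1(G',N_\beta)$ vanishes for every subgroup $G'$; since $H^1(G',\bigoplus_i\bm{Z}[G/H_i])=0$, the long exact sequence gives $H^1(G',N_\beta)=0$. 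This single step also yields (iv), because the quotient $N_\beta/M_\alpha$ is the permutation lattice $\bigoplus_i\bm{Z}[G/H_i]$, and (i), because $N=N_\beta/k^{\times}$ is an extension of that permutation lattice by $M$, hence a $G$-lattice. Your proposal circles around these ingredients (Shapiro, cohomological triviality of permutation pieces in degree one) but never identifies the extension class as the thing to be chosen, which is precisely the heart of the proof.
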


\begin{theorem} \label{t6.3}
Let $G$ be a finite group. Then $k(G)$ is retract $k$-rational if
and only if $k_\alpha(M)^G$ is retract $k$-rational for any
invertible $G$-lattice $M$, for any short exact sequence of
$\bm{Z}[G]$-modules $\alpha:0\to k^{\times}\to M_\alpha\to M\to 0$
with $G$ acting faithfully on $M_\alpha$.
\end{theorem}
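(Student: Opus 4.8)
The plan is to prove the two implications separately, in each case reducing to statements already available in the paper. For the ``only if'' direction, assume $k(G)$ is retract $k$-rational and let $\alpha:0\to k^\times\to M_\alpha\to M\to 0$ be any short exact sequence of $\bm{Z}[G]$-modules with $M$ invertible and $G$ acting faithfully on $M_\alpha$. Since $M$ is invertible it is in particular coflabby by Lemma \ref{l2.4}(1), and so by Lemma \ref{l2.4}(2) the extension $\alpha$ splits as a sequence of $\bm{Z}[G]$-modules; hence $M_\alpha\simeq k^\times\oplus M$ and the monomial action on $k(M)$ is, after a multiplicative change of variables, purely monomial. Thus $k_\alpha(M)^G=k(M)^G$ for a faithful invertible $G$-lattice $M$. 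Now $[M]^{fl}=[M]$ is invertible, so Theorem \ref{t5.4} (equivalence of (i) and (v)) applies directly and shows $k(M)^G=k_\alpha(M)^G$ is retract $k$-rational.

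For the ``if'' direction, assume $k_\alpha(M)^G$ is retract $k$-rational for every invertible $G$-lattice $M$ and every such faithful sequence $\alpha$. The natural choice is $M=\bm{Z}[G]$ (a permutation, hence invertible, lattice) together with the split sequence $\alpha:0\to k^\times\to k^\times\oplus\bm{Z}[G]\to\bm{Z}[G]\to 0$; then $G$ acts faithfully on $M_\alpha$ and $k_\alpha(M)^G=k(\bm{Z}[G])^G=k(G)$, which is therefore retract $k$-rational. (Alternatively one can take $M$ to be any faithful permutation $G$-lattice and invoke Theorem \ref{t5.3}(i) or Theorem \ref{t5.4}, but the identity $k(\bm{Z}[G])^G=k(G)$ makes the argument immediate.)

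I expect the main subtlety to be the ``only if'' direction, specifically the justification that a faithful monomial action with invertible underlying lattice can be normalized to a purely monomial one without destroying faithfulness on $M_\alpha$ — i.e.\ that after splitting $\alpha$ the resulting change of coordinates is defined over $k$ and compatible with the $G$-action. This is exactly where Lemma \ref{l2.4} does the work: the splitting is as $\bm{Z}[G]$-modules, so choosing the new Laurent variables to be a $\bm{Z}$-basis of the $M$-summand gives a $G$-equivariant isomorphism $k(M_\alpha\text{-action})\simeq k(M)$ over $k$, and faithfulness of $G$ on $M_\alpha$ forces faithfulness on $M$ (since $G$ acts trivially on the $k^\times$ summand in the split picture). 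Once this reduction is in hand, the theorem is a corollary of Theorem \ref{t5.4}, and no further computation is needed.
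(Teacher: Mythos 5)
Your ``if'' direction is fine and is exactly what the paper does (take $M=\bm{Z}[G]$ with the split sequence, so $k_\alpha(M)^G=k(G)$). The ``only if'' direction, however, has a genuine gap at its very first step: you invoke Lemma \ref{l2.4}(2) to split $\alpha:0\to k^{\times}\to M_\alpha\to M\to 0$, but in that lemma the coflabbiness hypothesis is on the \emph{sub}module $C$, not on the invertible quotient $E$. Here the submodule is $k^{\times}$, which is not a $G$-lattice at all, and it is not $H^1$-trivial: with trivial action, $H^1(G',k^{\times})=\mathrm{Hom}(G',k^{\times})$, which is nonzero for any nontrivial $G'$ as soon as $k$ contains the relevant roots of unity (already $\mathrm{Hom}(C_2,k^{\times})\neq 0$ when $\fn{char}k\neq 2$). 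Equivalently, $\mathrm{Ext}^1_{\bm{Z}[G]}(M,k^{\times})$ is a direct summand of a sum of groups $\mathrm{Hom}(G_i,k^{\times})$ and is generally nonzero even for $M$ permutation. So $\alpha$ need not split, the monomial action cannot in general be normalized to a purely monomial one, and your reduction to Theorem \ref{t5.4} only covers the split subcase (where, indeed, $[M]^{fl}$ is invertible and the argument goes through). The non-split, ``twisted'' case is precisely the content of the theorem; the remark after Lemma \ref{l6.4} (Saltman's $\sigma(x)=a/x$ example) shows how drastically twisted multiplicative invariants can differ from purely monomial ones.

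The paper's proof handles the non-split case without ever splitting $\alpha$: choose $N$ with $P=M\oplus N$ a permutation lattice and extend the action purely monomially on $k(N)$; since $P$ is permutation, $G$ acts on $k_\alpha(P)$ by \emph{linear} automorphisms (scalars times permuted variables), so Theorem \ref{t5.1} applied twice to $k_\alpha(P\oplus\bm{Z}[G])^G$ shows $k_\alpha(P)^G$ and $k(G)$ differ by rational extensions, whence $k_\alpha(P)^G$ is retract $k$-rational by Lemma \ref{l3.4}; finally one descends from $k_\alpha(M\oplus N)^G$ to $k_\alpha(M)^G$ via a dense-retraction argument (Lemma \ref{l3.4}), using faithfulness of $G$ on $M_\alpha$. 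If you want to repair your write-up, you need some mechanism of this kind (adding a permutation complement plus the no-name-type Theorem \ref{t5.1} and the dense retraction step), not a splitting of $\alpha$.
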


\begin{proof}
It suffices to show that ``only if" part.

Suppose that $k(G)$ is retract $k$-rational and $\alpha:0\to
k^{\times}\to M_\alpha\to M\to 0$ is the given extension.

Choose a $G$-lattice $N$ such that $M\oplus N$ is a permutation
$G$-lattice. Denote $P=M\oplus N$.

We extend the action of $G$ from $k_\alpha(M)$ to
$k_\alpha(M\oplus N)$ by requiring $G$ acts on $k(N)$ by purely
monomial $k$-automorphisms. Then $G$ acts faithfully on
$k_\alpha(M\oplus N)$.

Since $M\oplus N=P$, it follows that $G$ acts on
$k_\alpha(P)=k_\alpha(M\oplus N)$ by monomial $k$-automorphisms.
Moreover, if $P=\bigoplus_{1\le i\le n}\bm{Z}\cdot
x_i$\vspace*{2pt} and $G$ permutes $\{x_i:1\le i\le n\}$, then $G$
acts on $k_\alpha(P)=k(x_1,\ldots,x_n)$ by linear
$k$-automorphisms, i.e. for any $\sigma \in G$, any $1 \leq i \leq
n$, $\sigma \cdot x_i = a_i(\sigma) x_j$ where $j$ depends on $i$
and $a_i(\sigma)$ is some non-zero element in $k$ depending on
$\sigma$ and $i$.

Consider $k_\alpha(P\oplus Q)^G$ where $Q=\bm{Z}[G]$. By Theorem
\ref{t5.1}, $k_\alpha(P\oplus Q)^G$ is rational over
$k_\alpha(P)^G$; apply the same theorem again, $k_\alpha(P\oplus
Q)^G \simeq k_\alpha(Q\oplus P)^G$ is rational over
$k_\alpha(Q)^G=k(G)$. Since $k(G)$ is retract $k$-rational, so is
$k_\alpha(P)^G$ by Lemma \ref{l3.4}.

On the other hand, consider $k_\alpha(M\oplus N)^G$ ($\simeq
k_\alpha(P)^G$). By Lemma \ref{l3.4}, $k_\alpha(M)^G$ is a dense
retraction of $k_\alpha(M\oplus N)^G$ (note that $G$ acts
faithfully on $k_\alpha(M)$). Since $k_\alpha(P)^G$ is retract
$k$-rational, so is $k_\alpha(M)^G$ again by Lemma \ref{l3.4}.
\end{proof}

\begin{lemma} \label{l6.4}
Let $G$ be a finite group. Assume that $k(\bar{G})$ is retract
$k$-rational for all quotient groups $\bar{G}$ of the group $G$.
Let $M$ be a $G$-lattice and $\alpha:0 \rightarrow k^{\times}\to
M_\alpha\to M\to 0$ be a short exact sequence of
$\bm{Z}[G]$-modules satisfying that
\begin{enumerate}
\item[{\rm (i)}] denoting $H= \{ \sigma \in G : \sigma$ acts
trivially on $M_{\alpha} \}$, then there is a short exact sequence
of $\bm{Z}[G/H]$-modules $\beta:0\to k^{\times}\to N_\beta\to N\to
0$ where, regarding $N_\beta$ and $N$ as $\bm{Z}[G]$-modules, $N$
is a $G$-lattice, $N_\beta$ is $H^1$ trivial, $M_{\alpha}\subset
N_\beta$, and $N_\beta/M_{\alpha}$ is a permutation $G$-lattice;
\item[{\rm (ii)}] $N$ is $H^1$ trivial; and \item[{\rm (iii)}]
$[M]^{fl}$ is an invertible $G$-lattice.
\end{enumerate}
Then $k_\alpha(M)^G$ is retract $k$-rational.
\end{lemma}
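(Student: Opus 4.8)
The plan is to reduce the statement of Lemma \ref{l6.4} to the already-established Theorem \ref{t6.3}, using Saltman's $H^1$-trivial embedding (Theorem \ref{t6.2}) packaged into hypothesis (i), together with the flabby-class machinery to handle hypothesis (iii). First I would observe that, since $H$ acts trivially on $M_\alpha$, the monomial action of $G$ on $k_\alpha(M)$ factors through $\bar G = G/H$, and the associated extension $\beta : 0\to k^{\times}\to N_\beta\to N\to 0$ from (i) is genuinely a $\bar G$-extension with $N_\beta$ being $H^1$-trivial \emph{as a $\bar G$-module} (this is exactly where the $G$-vs-$\bar G$ bookkeeping in (i) is doing its work; one must check that $H^1(\bar G', N_\beta)=0$ for all $\bar G'\subset\bar G$ follows from the $G$-version, which is immediate since subgroups of $\bar G$ lift to subgroups of $G$ containing $H$).

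The core of the argument is a sandwich: $k_\alpha(M)^{\bar G}$ sits between two fields each of which is retract $k$-rational. On one side, because $N_\beta/M_\alpha$ is a permutation $\bar G$-lattice, Theorem \ref{t5.1} (applied as in the proof of Theorem \ref{t6.3}, after adjoining a copy of $\mathbf{Z}[\bar G]$ if needed to make the action linear) shows $k_\beta(N)^{\bar G}$ is rational — hence retract rational — over $k_\alpha(M)^{\bar G}$; this makes $k_\alpha(M)^{\bar G}$ a dense retraction of $k_\beta(N)^{\bar G}$ by Lemma \ref{l3.4}(iii)/(iv), so it suffices to prove $k_\beta(N)^{\bar G}$ is retract $k$-rational. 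On the other side, since $N_\beta$ is $H^1$-trivial and $[N]^{fl}$ is invertible — here I would deduce $[N]^{fl}=[M]^{fl}$ (invertible by (iii)) from the fact that $N$ and $M$ differ by the permutation lattice $N_\beta/M_\alpha$ modulo the central $k^{\times}$, using the standard flabby-class calculus of Definition \ref{d2.7} — the hypotheses of Theorem \ref{t6.3} are met for the pair $(N_\beta, N)$ over the group $\bar G$: $N$ is an invertible (or at least flabby-class-invertible) $\bar G$-lattice, and $k(\bar G)$ is retract $k$-rational by assumption on quotients of $G$. Actually, to invoke Theorem \ref{t6.3} literally one needs $N$ invertible, not merely $[N]^{fl}$ invertible; I would bridge this exactly as in the second proof of ``(i)$\Rightarrow$(v)'' in Theorem \ref{t5.4}, taking a flabby resolution $0\to N\to P\to E\to 0$ with $P$ permutation and $E$ invertible, noting $k_\beta(P)^{\bar G}$ is retract rational over $k_\beta(N)^{\bar G}$ via Theorem \ref{t5.3}(ii) and then that $k_\beta(P)^{\bar G}$ — with $P$ permutation — reduces to $k(\bar G)$ by Theorem \ref{t5.1} twice.

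I expect the main obstacle to be the careful tracking of the central torus $k^{\times}$ through all these lattice manipulations: the extensions $\alpha, \beta$ are not lattice sequences but sequences with $k^{\times}$ in the kernel, so ``$[N]^{fl}$ invertible'' must be interpreted at the level of the quotient lattices $M, N$ while simultaneously ensuring the $H^1$-triviality of $N_\beta$ (which lives one level up) is what lets Theorem \ref{t6.3}'s hypotheses transfer. In particular I would need to verify that in the chain $M_\alpha\subset N_\beta$ with $N_\beta/M_\alpha$ permutation, passing to the $\bar G$-fixed fields really does give a purely transcendental (or retract-rational) extension despite the presence of the constants $k^{\times}$ — this is the content of applying Theorem \ref{t5.1} after the action has been linearized, and it is the step where a sign error or an overlooked faithfulness hypothesis would most easily creep in. The remaining steps — the flabby-resolution bridge, the dense-retraction argument, the transitivity of retract rationality via Theorem \ref{t4.2} if one chooses to route through it — are by now routine given the results already assembled in Sections 3--5.
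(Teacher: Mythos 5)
The first half of your argument coincides with the paper's: reduce to $\bar G=G/H$ acting faithfully on $M_\alpha$, use hypothesis (i) (the permutation quotient $N_\beta/M_\alpha$) and Theorem \ref{t5.1} to see that $k_\beta(N)^{\bar G}$ is purely transcendental over $k_\alpha(M)^{\bar G}$ (so the two retract rationalities are equivalent by Lemma \ref{l3.4}(i); your detour through dense retractions is harmless but the citation should be Lemma \ref{l5.2} plus Lemma \ref{l3.4}(iii), not (iv)), and use the snake lemma with the flabby-class calculus to get $[N]^{fl}=[M]^{fl}$ invertible. The genuine gap is your bridge from ``$[N]^{fl}$ invertible'' to Theorem \ref{t6.3}. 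You take a flabby resolution $0\to N\to P\to E\to 0$ and invoke Theorem \ref{t5.3}(ii) to claim $k_\beta(P)^{\bar G}$ is retract rational over $k_\beta(N)^{\bar G}$; but Theorem \ref{t5.3}(ii) is a statement about purely monomial actions, not twisted ones, and, worse, the field ``$k_\beta(P)$'' is not defined: the twist $\beta$ lives on $N$, and extending it to a $k^{\times}$-twist of $P$ along the (generally non-split) inclusion $N\subset P$ is obstructed --- the obstruction is the image of $\beta$ in $\mathrm{Ext}^2_{\bm{Z}[\bar G]}(E,k^{\times})$, which need not vanish when $k$ is not algebraically closed.

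More decisively, your bridge never uses hypothesis (ii), the $H^1$-triviality of $N$; the only $H^1$-triviality you invoke is that of $N_\beta$, which comes from (i). The remark following the lemma shows (ii) is essential: in Saltman's example ($G=C_2$, $\sigma(x)=a/x$, $\sigma(y)=b/y$, $\sigma(z)=c/z$ with $[k(\sqrt a,\sqrt b,\sqrt c):k]=8$) the quotient-group hypothesis and (i) hold, and (iii) holds automatically by Theorem \ref{t2.5} since $C_2$ is cyclic, yet $k(x,y,z)^{\langle\sigma\rangle}$ is not retract $k$-rational; an argument making no use of (ii) would apply verbatim there and prove a false statement, so some step of your bridge must fail. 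The paper uses (ii) precisely at this point: since $N$ is coflabby and the cokernel $N'$ of a flabby resolution $0\to N\to P\to N'\to 0$ is invertible (because $[N]^{fl}$ is), Lemma \ref{l2.4}(2) splits the resolution, so $N$ itself is invertible, and Theorem \ref{t6.3} then applies directly to the pair $(N_\beta,N)$ over $\bar G$. Note that Theorem \ref{t6.3} avoids the twist-extension problem you run into, because there the permutation overlattice is a direct sum $N\oplus N'$, so the twist is extended trivially on the complement. Replacing your flabby-resolution bridge by this splitting argument repairs the proof and brings it in line with the paper's.
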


\begin{remark}The assumption (i) can be achieved
by Theorem \ref{t6.2}. On the other hand, the assumption (ii) is
essential. In fact, Saltman proves that, if $k$ is an infinite
filed with $\fn{char}k\ne 2$ and $\sigma: k(x,y,z)\to k(x,y,z)$ is
a $k$-automorphism defined by $\sigma(x)=a/x$, $\sigma(y)=b/y$,
$\sigma(z)=c/z$ where $a,b,c\in k\backslash\{0\}$ satisfying
$[k(\sqrt{a},\sqrt{b},\sqrt{c}):k]=8$, then $k(x,y,z)^{\langle
\sigma\rangle}$ is not retract $k$-rational \cite{Sa8}. The above
theorem is not applicable to Saltman's example, because $N$ is not
$H^1$ trivial for any embedding of $M_\alpha$ into an $H^1$
trivial module $N_\beta$.
\end{remark}

\begin{proof}
Replace the group $G$ by $G/H$ where $H$ is the subgroup in the
assumption (i). We may assume that the $G$-module $M_{\alpha}$ is
faithful.

Let $N_\beta$ be any $H^1$ trivial embedding of $M_{\alpha}$
satisfying the assumptions (i), (ii) and (iii).

Since $N_\beta/M_\alpha$ is a permutation $G$-lattice,
$k_\beta(N_\beta)=k_\alpha(M_\alpha)(x_1,\ldots,x_n)$ for some
$x_1,\ldots,x_n$ satisfying that, for any $\sigma\in G$,
$\sigma(x_i)=a_i(\sigma)\cdot x_j$ for some $x_j$ and some
$a_i(\sigma)\in k_\alpha(M_\alpha)\backslash\{0\}$. By Theorem
\ref{t5.1}, $k_\beta(N_\beta)^G$ is rational over
$k_\alpha(M_\alpha)^G$. By Lemma \ref{l3.4}, $k_\beta(N_\beta)^G$
is retract $k$-rational if and only if so is
$k_\alpha(M_\alpha)^G$.

From the snake lemma of the following diagram
\[ \arraycolsep=2pt
\begin{array}{ccccccc}
0 \to & k^{\times} &\to & M_\alpha & \to & M & \to 0 \\[2pt]
& \| & & \downarrow & & \downarrow & \\[2pt]
0 \to & k^{\times} & \to & N_\beta & \to & N & \to 0
\end{array}
\]
we find that $N_\beta/M_\alpha \simeq N/M$ is a permutation
$G$-lattice. By \cite[Lemma 3.1]{Sw2}, $[N]^{fl}=[M]^{fl}$ is an
invertible lattice.

Let $0\to N\to P\to N'\to 0$ be a flabby resolution of $N$, i.e.\
$P$ is a permutation lattice and $N'$ is a flabby lattice. By
Lemma \ref{l2.4}, this short exact sequence splits, i.e.\ $P\simeq
N\oplus N'$. Hence $N$ is an invertible $G$-lattice. Thus
$k_\beta(N_\beta)^G$ is retract $k$-rational by Theorem
\ref{t6.3}.
\end{proof}

\begin{lemma} \label{l6.5}
Let $G$ be a finite group satisfying the property in Theorem
\ref{t5.6} and $k(\bar{G})$ is retract $k$-rational for all
quotient groups $\bar{G}$ of the group $G$. Let $\alpha:0\to
k^{\times}\to M_\alpha\to M\to 0$ be a short exact sequence of
$\bm{Z}[G]$-modules where $M$ is a $G$-lattice. Assume the exact
sequence $\alpha$ satisfies assumptions (i) and (ii) in Lemma
\ref{l6.4}. Then $k_\alpha(M)^G$ is retract $k$-rational.
\end{lemma}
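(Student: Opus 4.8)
The plan is to obtain this lemma as an immediate consequence of Lemma \ref{l6.4}. Among the three hypotheses (i), (ii), (iii) of that lemma, the first two are assumed here, and $k(\bar G)$ is retract $k$-rational for every quotient $\bar G$ of $G$ by hypothesis; so the only thing that needs to be checked is hypothesis (iii), namely that $[M]^{fl}$ is an invertible $G$-lattice. I will verify this using the assumption that $G$ has the structure described in Theorem \ref{t5.6}.

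First I would note that, by Theorem \ref{t5.6}, saying that $G$ satisfies the property of that theorem is the same as saying that every Sylow subgroup of $G$ is cyclic. For such a group the Endo--Miyata theorem (Theorem \ref{t2.5}) guarantees that every flabby $G$-lattice is invertible. Now choose a flabby resolution $0\to M\to P\to F\to 0$ of the $G$-lattice $M$ (Theorem \ref{t2.6}), with $P$ a permutation lattice and $F$ a flabby lattice; then $F$ is invertible, so $[M]^{fl}=[F]$ is the similarity class of an invertible lattice. Thus hypothesis (iii) of Lemma \ref{l6.4} holds, and applying Lemma \ref{l6.4} yields that $k_\alpha(M)^G$ is retract $k$-rational.

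There is no genuine obstacle in this argument: all the real work sits in Lemma \ref{l6.4} (which in turn rests on Theorem \ref{t6.3} and Theorem \ref{t5.1}) together with the Endo--Miyata theorem, and the present lemma is simply the form in which those ingredients are packaged for the proof of the generalization of Barge's theorem (cf.\ Theorem \ref{t6.1}). The one point deserving a second glance is the reduction carried out inside Lemma \ref{l6.4} to the case where $M_\alpha$ is a faithful module, i.e.\ the passage from $G$ to $G/H$: since a quotient of a group all of whose Sylow subgroups are cyclic again has cyclic Sylow subgroups, and since a quotient of $G/H$ is a quotient of $G$, both the cyclic-Sylow hypothesis and the hypothesis that $k(\bar G)$ is retract $k$-rational for all quotients $\bar G$ descend to $G/H$; hence that reduction costs nothing and the invocation of Lemma \ref{l6.4} is legitimate.
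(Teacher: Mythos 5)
Your proposal is correct and follows the paper's own argument exactly: the paper also reduces the lemma to verifying hypothesis (iii) of Lemma \ref{l6.4}, which is immediate from Theorem \ref{t5.6} together with the Endo--Miyata theorem (Theorem \ref{t2.5}). Your extra remarks on the flabby resolution and on the passage to $G/H$ are harmless elaborations of the same proof.
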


\begin{proof}
By Lemma \ref{l6.4}, it remains to show that the assumption (iii)
is valid for $\alpha$, i.e.\ $[M]^{fl}$ is invertible. But this
follows from Theorem \ref{t2.5} and Theorem \ref{t5.6}.
\end{proof}

\begin{remark}
Lemma \ref{l6.5} was proved by Saltman when $G\simeq C_p$ where
$p$ is a prime number and $\zeta_p\in k$ \cite[Lemma 11]{Sa7}.
\end{remark}

\medskip
The next result is a generalization of Theorem \ref{t6.1} and is
valid for any field  $k$ which is algebraically closed and
$\fn{char}k\nmid \, \mid G \mid$. But we choose to present our
result when $k$ is the field of complex numbers.

\begin{theorem} \label{t6.6}
Let $G$ be a finite group. The following three statements are
equivalent,

{\rm (i)} All the Sylow subgroups of $G$ are cyclic;

{\rm (ii)} $\bm{C}_{\alpha}(M)^G$ is retract $\bm{C}$-rational for
all $G$-lattices $M$, for all short exact sequences of
$\bm{Z}[G]$-modules $\alpha : 0 \rightarrow \bm{C}^{\times}
\rightarrow M_{\alpha} \rightarrow M \rightarrow 0$;

{\rm (iii)} $Br_{v,\bm{C}}(\bm{C}_{\alpha}(M)^G)=0$ for all
$G$-lattices $M$, for all short exact sequences of
$\bm{Z}[G]$-modules $\alpha : 0 \rightarrow \bm{C}^{\times}
\rightarrow M_{\alpha} \rightarrow M \rightarrow 0$.
\end{theorem}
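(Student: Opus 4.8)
\textbf{Proof proposal for Theorem \ref{t6.6}.}

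The plan is to establish the cycle of implications ${\rm (i)}\Rightarrow{\rm (ii)}\Rightarrow{\rm (iii)}\Rightarrow{\rm (i)}$. The middle implication is immediate: by Theorem \ref{t3.2}(i), if $\bm{C}_\alpha(M)^G$ is retract $\bm{C}$-rational then $\fn{Br}_{v,\bm{C}}(\bm{C}_\alpha(M)^G)=0$, since $\bm{C}$ is algebraically closed and thus $\fn{Br}(\bm{C})=0$. The implication ${\rm (iii)}\Rightarrow{\rm (i)}$ is exactly the direction ${\rm (ii)}\Rightarrow{\rm (i)}$ of Barge's Theorem \ref{t6.1}, which we may simply invoke. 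So the entire content of the theorem lies in proving ${\rm (i)}\Rightarrow{\rm (ii)}$.

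For ${\rm (i)}\Rightarrow{\rm (ii)}$, assume all Sylow subgroups of $G$ are cyclic and let $\alpha:0\to\bm{C}^{\times}\to M_\alpha\to M\to 0$ be an arbitrary short exact sequence of $\bm{Z}[G]$-modules with $M$ a $G$-lattice. I would apply Lemma \ref{l6.5}. To do so I must check its hypotheses. First, every quotient group $\bar{G}$ of $G$ again has all Sylow subgroups cyclic (a Sylow subgroup of $\bar G$ is a quotient of a Sylow subgroup of $G$, hence cyclic); by Theorem \ref{t5.6} such $\bar G$ is metacyclic, and by Theorem \ref{t5.5} (or Theorem \ref{t5.7}) together with the fact that $\zeta_{\exp(\bar G)}\in\bm{C}$ automatically, $\bm{C}(\bar G)$ is retract $\bm{C}$-rational — actually one needs here only that cyclic extensions of $\bm{C}$ are trivial, so condition (iii) of Theorem \ref{t5.5} holds vacuously, giving retract $\bm{C}$-rationality of $\bm{C}(\bar G)$ for cyclic $\bar G$, and then Theorem \ref{t5.7} upgrades this to all metacyclic $\bar G$ of the stated form. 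Second, I must produce a short exact sequence $\beta$ satisfying assumptions (i) and (ii) of Lemma \ref{l6.4}: assumption (i) is furnished directly by Saltman's embedding Theorem \ref{t6.2}, applied to the quotient $G/H$ where $H=\{\sigma\in G:\sigma \text{ acts trivially on }M_\alpha\}$, and assumption (ii) — that the lattice $N$ appearing there is $H^1$ trivial — follows because $N=N_\beta/\bm{C}^{\times}$ is a quotient of the $H^1$ trivial module $N_\beta$ by the divisible (hence cohomologically trivial) group $\bm{C}^{\times}$, so the long exact cohomology sequence gives $H^1(G',N)\cong H^1(G',N_\beta)=0$ for every subgroup $G'$.

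The main obstacle I anticipate is not in the logical skeleton but in verifying assumption (ii) carefully, i.e.\ that $\bm{C}^{\times}$ is $H^1$ trivial as a $\bm{Z}[G']$-module for every subgroup $G'\le G$, so that the long exact sequence in Tate cohomology associated to $0\to\bm{C}^{\times}\to N_\beta\to N\to 0$ forces $H^1(G',N)=0$. Since $G$ acts on $\bm{C}^{\times}$ through a finite quotient, $\bm{C}^{\times}$ is a $\bm{Q}$-vector space (uniquely divisible) in its additive structure on the prime-to-characteristic part, but one must be slightly careful: $\bm{C}^{\times}$ is divisible but has torsion (the roots of unity), so I would argue via $\hat H^i(G',\bm{C}^{\times})$ using that $\bm{C}^{\times}$ is an injective $\bm{Z}$-module, hence cohomologically trivial over any finite group by a standard argument, giving $\hat H^i(G',\bm{C}^{\times})=0$ for all $i$; this is the key input that makes the cohomology of $N_\beta$ transfer to $N$. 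With that in hand, Lemma \ref{l6.5} applies and yields that $\bm{C}_\alpha(M)^G$ is retract $\bm{C}$-rational, completing ${\rm (i)}\Rightarrow{\rm (ii)}$ and hence the theorem.
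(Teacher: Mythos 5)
Your skeleton is the same as the paper's: (ii)$\Rightarrow$(iii) by Theorem \ref{t3.2}, (iii)$\Rightarrow$(i) by Barge's Theorem \ref{t6.1}, and (i)$\Rightarrow$(ii) via Theorem \ref{t6.2} and Lemma \ref{l6.5}. But the step on which everything hinges --- verifying assumption (ii) of Lemma \ref{l6.4}, i.e.\ that the lattice $N$ is $H^1$ trivial --- is justified by a false claim. You assert that $\bm{C}^{\times}$, being divisible (an injective $\bm{Z}$-module), is cohomologically trivial over any finite group, so that $H^1(G',N)\cong H^1(G',N_\beta)=0$. Injectivity as a $\bm{Z}$-module does not imply cohomological triviality as a $\bm{Z}[G']$-module: with trivial action one has $H^1(G',\bm{C}^{\times})=\fn{Hom}(G',\bm{C}^{\times})\neq 0$ for every nontrivial $G'$, and $H^2(G',\bm{C}^{\times})$ is the Schur multiplier, e.g.\ $H^2(C_p\times C_p,\bm{C}^{\times})\simeq \bm{Z}/p\bm{Z}\neq 0$. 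If your argument were valid it would establish assumption (ii) of Lemma \ref{l6.4} for \emph{every} finite group, with hypothesis (i) of the theorem never used at this point --- a clear warning sign. What the long exact sequence actually gives is an exact sequence $0=H^1(G',N_\beta)\to H^1(G',N)\to H^2(G',\bm{C}^{\times})\to H^2(G',N_\beta)$, so you need either $H^2(G',\bm{C}^{\times})=0$ or the injectivity of $H^2(G',\bm{C}^{\times})\to H^2(G',N_\beta)$; and the vanishing is exactly where the cyclic-Sylow hypothesis enters. The paper's argument: the restriction maps to the Sylow subgroups $G'_p$ are injective on $p$-primary parts, and since each $G'_p$ is cyclic of some order $q$, periodicity of Tate cohomology gives $H^2(G'_p,\bm{C}^{\times})\simeq H^0(G'_p,\bm{C}^{\times})\simeq \bm{C}^{\times}/(\bm{C}^{\times})^{q}=0$, using divisibility only at this last step. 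You should replace your ``cohomological triviality'' claim by this computation.

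A secondary slip: to get that $\bm{C}(\bar G)$ is retract $\bm{C}$-rational for all quotients $\bar G$, you invoke Theorem \ref{t5.7}, but that theorem \emph{assumes} $k(G)$ retract $k$-rational and concludes something about $k(M)^G$; it does not upgrade the cyclic case to metacyclic groups. The paper instead applies Theorem \ref{t5.10} (a metacyclic group has a cyclic, hence abelian, normal subgroup with cyclic quotient, and the root-of-unity condition is vacuous over $\bm{C}$); alternatively Theorem \ref{t3.5}(2) combined with Theorem \ref{t3.7} and the coprimality in Theorem \ref{t5.6} would do. This is easily repaired, but as written the citation does not support the claim.
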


\begin{proof}
${\rm (ii)}\Rightarrow{\rm (iii)}$ by Theorem \ref{t3.2}.

${\rm (iii)}\Rightarrow{\rm (i)}$ by Theorem \ref{t6.1}.

It remains to show that ${\rm (i)}\Rightarrow{\rm (ii)}$. We will
apply Lemma \ref{l6.5}.

Let $G$ be a finite group satisfying the assumption (i) of this
theorem. By Theorem \ref{t5.6}, the group $G$ and all of its
quotient groups are metacyclic; thus Theorem \ref{t5.10} is
applicable to these groups. It follows that $\bm{C}(\bar{G})$ is
retract $\bm{C}$-rational for all quotient groups $\bar{G}$ of the
group $G$.

For a short exact sequence of $\bm{Z}[G]$-modules $\alpha : 0
\rightarrow \bm{C}^{\times} \rightarrow M_{\alpha} \rightarrow M
\rightarrow 0$, we will show that $\bm{C}_{\alpha}(M)^G$ is
retract $\bm{C}$-rational. Replacing $G$ by some quotient group
$G/H$ if necessary, we may assume that $G$ acts faithfully on
$M_{\alpha}$.

In order to apply Lemma \ref{l6.5}, we should check the validity
of the assumptions (i) and (ii) of Lemma \ref{l6.5}. The
assumption (i) is valid by Theorem \ref{t6.2}. As to the
assumption (ii), we will show that $H^2(G', \bm{C}^{\times})
\rightarrow H^2(G', N_{\beta})$ is injective for any subgroup $G'
\subset G$, which is equivalent to the assumption (ii) of Lemma
\ref{l6.5}, because $N_{\beta}$ is $H^1$ trivial.

Note that $H^2(G', \bm{C}^{\times})$ is the trivial group, because
we may consider $H^2(G'_p, \bm{C}^{\times})$ where $G'_p$ is a
$p$-Sylow subgroup of $G'$ and we find that $H^2(G'_p,
\bm{C}^{\times}) \simeq H^0(G'_p, \bm{C}^{\times}) \simeq
\bm{C}^{\times}/ (\bm{C}^{\times})^q = 0$ where $q$ is the order
of the cyclic group $G'_p$. Hence the result.
\end{proof}

\newpage
%----------------------------------------References
\renewcommand{\refname}{\centering{References}}

\end{document}